\newif\ifHAL
\HALtrue

\ifHAL
\documentclass[10pt,a4paper]{article}
\else
\documentclass[sn-mathphys-num,pdflatex,10pt]{sn-jnl}%
\fi


\usepackage{fullpage}
\usepackage{graphicx}%
\usepackage{multirow}%
\usepackage{amsmath,amssymb,amsfonts}%
\usepackage{amsthm}%
\usepackage{mathrsfs}%
\usepackage[title]{appendix}%
\usepackage{xcolor}%
\usepackage{textcomp}%
\usepackage{manyfoot}%
\usepackage{booktabs}%
\usepackage{algorithm}%
\usepackage{algorithmicx}%
\usepackage{algpseudocode}%
\usepackage{listings}%

\ifHAL
\usepackage[colorlinks,citecolor=cyan,linkcolor=magenta]{hyperref}
\fi

\usepackage{amsthm}
\usepackage{bm}
\usepackage{subcaption}
\usepackage{graphicx}
\usepackage{xspace}
\usepackage{tikz}
\usepackage{marginnote}
\usepackage{mathabx}



\newcommand{\vertiii}[1]{{\|\kern-0.25ex | #1
		| \kern-0.25ex \|}}

\newcommand{\diff}{A}
\newcommand{\ddd}{\text{\rm D}}
\newcommand{\dn}{\text{\rm N}}

\newcommand{\mean}[1]{\{\kern-1.1mm\{#1\}\kern-1.1mm\}}                  
\newcommand{\jump}[1]{[\![#1]\!]_F}                        
\newcommand{\jumpK}[1]{[\![#1]\!]_{\partial K}}                        

\newcommand{\ltwo}[2]{\|{#1}\|_{#2}}

\newcommand{\ndg}[1]{| \kern -.25mm \|{#1}| \kern -.25mm \|}
\newcommand{\nsdg}[1]{| \kern -.25mm \|{#1}| \kern -.25mm \|_{\rm s}}
\newcommand{\su}{\sum_{K\in \mesh}}
\newcommand{\sua}{\sum_{K\in \mesha}}

\newcommand{\bnabla}{\nabla_{\mesh}}
\newcommand{\nno}{\nonumber}
\newcommand{\mbf}[1]{\mbox{\boldmath$\rm{#1}$}}


\newcommand{\fes}{\hat{V}_{h}^k}
\newcommand{\fesz}{\hat{V}_{h0}^k}
\newcommand{\fesD}{\hat{V}_{h0,\rm{D}}^k}
\newcommand{\fesS}{\hat{V}_{hg,\rm{D}}^k}
\newcommand{\fesE}{\hat{V}_{K}^k}
\newcommand{\n}{{\boldsymbol{n}}}
\newcommand{\ba}{{\boldsymbol{a}}}
\newcommand{\w}{{\boldsymbol{w}}}
\newcommand{\Fall}{\mathcal{F}_h}
\newcommand{\FD}{\mathcal{F}_h^{\rm D}}
\newcommand{\FN}{\mathcal{F}_h^{\rm N}}

\newcommand{\mesh}{\mathcal{T}_h}
\newcommand{\vertice}{\mathcal{V}_h}
\newcommand{\norm}[2]{\|{#1}\|_{{#2}}}
\newcommand{\ncdg}[1]{| \kern -.25mm \|{#1}| \kern -.25mm \|_{\rm DG}}

\newcommand{\bx}{\boldsymbol{x}}

\newcommand{\bphi}{\boldsymbol{\phi}}
\newcommand{\bpsi}{\boldsymbol{\psi}}
\newcommand{\bH}{\boldsymbol{H}}
\newcommand{\bL}{\boldsymbol{L}}

\renewcommand{\hat}[1]{\widehat{#1}}
\makeatletter
\newcommand*{\rom}[1]{\text{\expandafter\@slowromancap\romannumeral #1@}}
\makeatother

\newcommand{\upi}{^{\mathrm{i}}}
\newcommand{\upb}{^{\mathrm{b}}}

\newcommand{\Fb}{\mathcal{F}_h\upb}

\newcommand{\Fint}{\mathcal{F}_h\upi}
\newcommand{\Fa}{\mathcal{F}_{\ba}}
\newcommand{\dF}{\partial F}
\newcommand{\dK}{\partial K}
\newcommand{\dKi}{\partial K\upi}

\newcommand{\dKN}{\partial K^{\dn}}
\newcommand{\dKD}{\partial K^{\ddd}}
\newcommand{\FK}{\mathcal{F}_{\dK}}
\newcommand{\Ihk}{\mathcal{\hat{I}}_h^k}
\newcommand{\FKi}{\mathcal{F}_{\dKi}}

\newcommand{\FKN}{\mathcal{F}_{\dKN}}
\newcommand{\FKD}{\mathcal{F}_{\dKD}}

\newcommand{\verticei}{\vertice\upi}
\newcommand{\verticeb}{\vertice\upb}

\newcommand{\verticeK}{\mathcal{V}_K}
\newcommand{\mesha}{\mathcal{T}_{\ba}}

\newcommand{\IKM}{I^k_{{\rm KM}}}
\newcommand{\ImKM}{I^k_{{\rm mKM}}}
\newcommand{\IamKM}{I^{k,\ba}_{{\rm mKM}}}
\newcommand{\oma}{\omega_{\ba}}



\ifHAL

\newtheorem{theorem}{Theorem}[section]
\newtheorem{proposition}[theorem]{Proposition}
\newtheorem{lemma}[theorem]{Lemma}
\newtheorem{corollary}[theorem]{Corollary}
\newtheorem{remark}[theorem]{Remark}
\newtheorem{definition}[theorem]{Definition}

\else

\theoremstyle{thmstyleone}%
\newtheorem{theorem}{Theorem}[section]


\theoremstyle{thmstyletwo}%
\newtheorem{remark}{Remark}%
\newtheorem{lemma}[theorem]{Lemma}
\newtheorem{corollary}[theorem]{Corollary}

\theoremstyle{thmstylethree}%
\newtheorem{definition}{Definition}%
\fi

\raggedbottom

\begin{document}

\ifHAL

\title{$hp$-error analysis of mixed-order hybrid high-order methods for elliptic problems on simplicial meshes}

\author{
	Zhaonan Dong\thanks{
	Inria, 48 rue Barrault, 75647 Paris, France,
	and CERMICS, ENPC, Institut Polytechnique de Paris, 6 \& 8 avenue B.~Pascal, 77455 Marne-la-Vall\'{e}e, France
	{\tt{zhaonan.dong@inria.fr}}.}
	\and
        Alexandre Ern\thanks{
	CERMICS, ENPC, Institut Polytechnique de Paris, 6 \& 8 avenue B.~Pascal, 77455 Marne-la-Vall\'{e}e, France,
	and  Inria, 48 Rue Barrault, 75647 Paris, France
	{\tt{alexandre.ern@enpc.fr}}.}
}

\else

\title[$hp$-error analysis of mixed-order HHO methods for elliptic problems]{$hp$-error analysis of mixed-order hybrid high-order methods for elliptic problems on simplicial meshes}


\author*[1,2]{\fnm{Zhaonan} \sur{Dong}}\email{zhaonan.dong@inria.fr}

\author*[2,1]{\fnm{Alexandre} \sur{Ern}}\email{alexandre.ern@enpc.fr}

\affil[1]{\orgname{Inria}, \orgaddress{\street{48 rue Barrault}, \city{Paris}, \postcode{75647}, \country{France}}}

\affil[2]{\orgdiv{CERMICS}, \orgname{ENPC, Institut Polytechnique de Paris}, \orgaddress{\street{6 \& 8 av. Blaise Pascal}, \city{Marne-la-Vall\'{e}e}, \postcode{77455}, \country{France}}}

\fi

\ifHAL

\maketitle

\begin{abstract}
We present both $hp$-a priori and $hp$-a posteriori error analysis of a mixed-order hybrid high-order (HHO) method to approximate second-order elliptic problems on simplicial meshes.
Our main result on the $hp$-a priori error analysis is a $\frac12$-order $p$-suboptimal error estimate. This result is, to our knowledge, the first of this kind for hybrid nonconforming methods and matches the state-of-the-art for other nonconforming methods (as discontinuous Galerkin methods) with general (mixed Dirichlet/Neumann) boundary conditions. Our second main result is a residual-based $hp$-a posteriori upper error bound, comprising residual, normal flux jump, tangential jump, and stabilization estimators (plus data oscillation terms). The first three terms are $p$-optimal and only the latter is $\frac12$-order $p$-suboptimal. This result is, to our knowledge, the first $hp$-a posteriori error estimate for HHO methods. A novel approach based on the partition-of-unity provided by hat basis functions and on local Helmholtz decompositions on vertex stars is devised to estimate the nonconforming error. Finally, we establish local lower error bounds. Remarkably, the normal flux jump estimator is only $\frac12$-order $p$-suboptimal, as it can be bounded by the stabilization owing to the local conservation property of HHO methods. Numerical examples illustrate the theory.
\end{abstract}

\else


\abstract{We present both $hp$-a priori and $hp$-a posteriori error analysis of a mixed-order hybrid high-order (HHO) method to approximate second-order elliptic problems on simplicial meshes.
Our main result on the $hp$-a priori error analysis is a $\frac12$-order $p$-suboptimal error estimate. This result is, to our knowledge, the first of this kind for hybrid nonconforming methods and matches the state-of-the-art for other nonconforming methods (as discontinuous Galerkin methods) with general (mixed Dirichlet/Neumann) boundary conditions. Our second main result is a residual-based $hp$-a posteriori upper error bound, comprising residual, normal flux jump, tangential jump, and stabilization estimators (plus data oscillation terms). The first three terms are $p$-optimal and only the latter is $\frac12$-order $p$-suboptimal. This result is, to our knowledge, the first $hp$-a posteriori error estimate for HHO methods. A novel approach based on the partition-of-unity provided by hat basis functions and on local Helmholtz decompositions on vertex stars is devised to estimate the nonconforming error. Finally, we establish local lower error bounds. Remarkably, the normal flux jump estimator is only $\frac12$-order $p$-suboptimal, as it can be bounded by the stabilization owing to the local conservation property of HHO methods. Numerical examples illustrate the theory.
}

\keywords{a posteriori error estimate, adaptive algorithms, hybrid high-order method, $hp$-error estimate}

\pacs[MSC Classification]{65N15, 65N30}

\markboth{Z. DONG, A. ERN}{$hp$-error analysis of mixed-order HHO methods for elliptic problems.}
\maketitle
\fi

\section{Introduction} \label{Introduction}

Second-order elliptic PDEs are widely used in the modeling of diffusion phenomena.   In the present work, we consider the following model problem:
\begin{equation}\label{Problem}
\left\{ \begin{alignedat}{2}
- \nabla {\cdot} (\diff \nabla u) &=  f   &\quad  &\text{in }  \Omega , \\
 u &= g_{\ddd} &\quad   &\text{on } \Gamma_{\ddd}, \\
 (\diff\nabla u){\cdot} \n_{\Omega} &= g_{\dn} &\quad   &\text{on }  \Gamma_{\dn},
\end{alignedat}
\right.
\end{equation}
where the domain  $\Omega$ is a Lipschitz polytopal domain (open, connected, bounded set) in $\mathbb{R}^d$, $d\in \{2,3\}$, with boundary $\partial \Omega$ and unit outward normal $\n_\Omega$. The boundary $\partial\Omega$  is split into two disjoint parts $\Gamma_{\ddd}$ and $\Gamma_{\dn}$ with $|\Gamma_{\ddd}| > 0$. In addition, the load $f\in L^2(\Omega)$, $g_{\ddd}$ is a restriction to $\Gamma_{\ddd}$ of a function in  $H^{\frac{1}{2}}(\partial \Omega)$,  $g_{\dn}\in L^2(\Gamma_{\dn})$
and $A$ is a piecewise scalar-valued diffusion coefficient such that $0<A_{\Omega}^{\flat}  \leq A(\bx) \leq A_{\Omega}^{\sharp}$ for a.e.~$\bx\in \Omega$.

The hybrid high-order (HHO) method was introduced in \cite{DiPEL:14} for linear diffusion and in \cite{DiPEr:15} for locking-free linear elasticity. As shown in \cite{CoDPE:16}, the HHO method is
closely related to hybridizable discontinuous Galerkin (HDG) and weak Galerkin (WG) methods.
These links have been leveraged, e.g., in \cite{DongErn2021biharmonic,ErnSteins2024}
to devise a unified convergence analysis for the biharmonic problem and the acoustic wave equation.
We also refer the reader to \cite{Bridging_HHO} and \cite{cicuttin2021hybrid} for links to the
nonconforming virtual element method (ncVEM), and to \cite{HHOHMM} for links to
multiscale hybrid-mixed (MHM) methods. HHO methods are formulated in terms of broken cell and face polynomial spaces. The equal-order HHO method corresponds to cell and face unknowns having the same degree $k\geq0$. Instead, the mixed-order HHO method corresponds to cell unknowns having degree $(k+1)$ and face unknowns having degree  $k\geq0$. Considering cell unknowns of degree $(k-1)$ with $k\geq1$ is also possible. One salient advantage of the mixed-order setting with cell unknowns of degree $(k+1)$ is that $h$-optimal convergence can be achieved by using the simple Lehrenfeld--Sch\"oberl (LS) HDG stabilization \cite{lehsc:16}, in contrast with the more sophisticated HHO stabilization needed in the other settings.

The goal of the present work is to derive $hp$-a priori and $hp$-a posteriori
error estimates for the mixed-order HHO method on simplicial meshes. Owing to the links highlighted above, the present results extend to HDG and WG methods in the same setting.
Although HHO (and HDG, WG) methods can deal with polytopa{l} meshes, we focus here on simplicial meshes because some of the $hp$-analysis tools we are going to invoke are only available on such meshes (and on tensor-product meshes as well). Our main result concerning the $hp$-a priori error analysis is
Theorem~\ref{Theorem: a priori}, where we derive a $\frac12$-order $p$-suboptimal and $h$-optimal error estimate. The only $hp$-a priori HHO error estimate we are aware of is derived in \cite{Aghili17hpHHO} for the equal-order HHO method and leads to a $1$-order $p$-suboptimal error bound.
Here, we use the same $hp$-scaling of the stabilization bilinear form, but the simpler form
of the LS stabilization allows us to prove an error estimate with a tighter scaling in the polynomial degree.
Notice in passing that $\frac12$-order $p$-suboptimality corresponds to the state of the art for classical discontinuous Galerkin (dG) methods \cite{hpDGHHS,GeHaMe10,DGpolybook} under general Dirichlet/Neumann boundary conditions, and was also
obtained in \cite{EggerWal13} for a hybrid dG method applied to Stokes flow. 
In the case of homogeneous Dirichlet conditions over the whole boundary,
it is shown in \cite{Stwi10} for dG methods applied to the Poisson problem
and in \cite{LeLeSc19} for HDG methods applied to the Stokes problem that
an a priori error estimate with full $p$-optimality can be achieved. However,
counterexamples in \cite{GeHaMe10} confirm that the $\frac12$-order $p$-suboptimality
is sharp in the case of inhomogeneous Dirichlet boundary conditions.

The second main contribution of the present work is to derive a residual-based $hp$-a posteriori error estimate for the mixed-order HHO method in dimensions $d\in\{2,3\}$. Our main result concerning the (global) upper error bound is Theorem~\ref{thm:upperbound}, where all the terms in the upper bound are $hp$-optimal except one term which is $\frac12$-order $p$-suboptimal. To the best of our knowledge, this is the first such estimate for HHO methods, whereas $h$-a posteriori error estimates for HHO methods were derived previously in \cite{di2016posteriori,bertrand2023stabilization,carstensen2024adaptive}, focusing on either the equal-order HHO setting or a stabilization-free variant of the method.

The main challenge in deriving an upper error bound for nonconforming methods is to estimate the nonconforming error, which essentially measures by how much the discrete solution departs from $H^1$. A first possibility is to invoke a nodal-averaging operator mapping to $H^1$, as done, e.g., in \cite{KP,Ernstephansen08,Ainsworth07,ErnVoharlik} for dG methods. However, $p$-optimal approximation results for nodal-averaging operators are so far available only on tensor-product \cite{BurmanErn07} and triangular \cite{hsw} meshes, whereas the best bound available on tetrahedral meshes is $p$-suboptimal by one order \cite[Lemma 7.6]{EggerWal13}.
An alternative to using a nodal-averaging operator is to invoke a (global) Helmholtz decomposition on the
nonconforming error, as in \cite{dari1996posteriori, Carstensen2002,Carstensen2007unifying} for Crouzeix--Raviart finite elements and in \cite{Becker03,cangiani2023aposteriori} for dG methods. Here, we adopt this technique, but we introduce a novel idea in that we additionally use the partition of unity provided by the hat basis functions to invoke a \emph{local} Helmholtz decomposition on each vertex star (the subdomain covered by the mesh cells sharing the vertex). The benefit is that each vertex star is simply connected, and recent results on the stability constant in the local Helmholtz decomposition are available \cite{GuzSal21}. Instead, the stability constant for a global Helmholtz decomposition in a domain with $N$ holes grows unfavorably with $N$ \cite{bertrand2023stabilization}. Finally, we emphasize that our novel idea for bounding the nonconforming error can be directly applied to the a posteriori analysis of many other nonconforming discretization methods, such as dG, HDG, and WG.

We also address the efficiency of our a posteriori error estimate by establishing local lower error bounds. Our main result is Theorem~\ref{thm:lowerbound} which leads to $\frac32$-order $p$-suboptimality (only the tangential jump estimator leads to such suboptimality). Numerical experiments, though, indicate only $\frac12$-order suboptimality in $p$. Our proof of the lower error bound uses bubble function techniques inspired from~\cite{MelenkWohlmuth01}, but we introduce a novel argument in the proof in that we invoke the local conservation property of the HHO method to improve the efficiency result on the normal flux jump from $\frac32$-order to $\frac12$-order $p$-suboptimality. Another interesting numerical observation is
that the normal flux jump is not the dominant component of the a posteriori error estimate for HHO methods, in contrast to the situation classically encountered with conforming finite elements
\cite{CarstensenVerfurth99}.

The rest of this work is organised as follows. We present the weak formulation of the model problem together with the discrete setting in Section \ref{sec:Weak form and HHO methods}. In Section \ref{sec:HHO}, we introduce the HHO method, and in Section \ref{sec:HHO a priori}, we derive the $hp$-a priori error estimate. In Section \ref{sec: a posteriori error analysis}, we present the residual-based $hp$-a posteriori error analysis, leading to a (global) upper error bound and (local) lower error bounds. Numerical experiments are presented in Section \ref{sec:Numerical example} to illustrate the theory. Finally, in Section~\ref{sec:proofs}, we collect several (technical) proofs related to the $hp$-a posteriori error analysis.

\section{Weak form and discrete setting}\label{sec:Weak form and HHO methods}
In this section, we introduce some basic notation, the weak formulation of the model problem, and the discrete setting to formulate and analyze the HHO discretization.

\subsection{Basic notation and weak formulation} \label{Model problem}

We use standard notation for the Lebesgue and Sobolev spaces and, in particular, for the fractional-order Sobolev spaces, we consider the Sobolev--Slobodeckij seminorm based on the double integral. For an open, bounded, Lipschitz set $S$ in $\mathbb{R}^d$, $d\in\{1,2,3\}$, we denote by $(v,w)_S$ the $L^2(S)$-inner product, and we employ the same notation when $v$ and $w$ are vector- or matrix-valued fields. We denote by $\nabla w$ the (weak) gradient of $w$. We use boldface notation to denote vectors in $\mathbb{R}^d$, as well as $\mathbb{R}^d$-valued fields and functional spaces composed of such fields.

Setting $H^1_{g,\ddd}(\Omega):= \{v\in H^1(\Omega)\:|\: v|_{\Gamma_{\ddd}} = g_\ddd \}$,
the weak formulation of \eqref{Problem} is as follows: Find $u\in H^1_{g,\ddd}(\Omega)$, such that
\begin{equation}\label{eq:prob}
(\diff{\nabla u}, {\nabla v})_{\Omega}  =  ( f , v )_{\Omega}  +(g_{\dn},v)_{\Gamma_{\dn}},
\end{equation}
for all $v\in H^1_{0,\ddd}(\Omega)$. The well-posedness of~\eqref{eq:prob} follows from the Lax--Milgram Lemma, see, e.g., \cite[Proposition 31.21]{Ern_Guermond_FEs_II_2021}.

\subsection{Mesh}

Let $\mesh$ be a simplicial mesh that covers the domain $\Omega$ exactly and is compatible with  the boundary partition as well as  the domain partition on which $A$ is piecewise constant.  A generic mesh cell is denoted by $K\in\mesh$, its diameter by $h_K$, and its unit outward normal by $\n_K$. We set $A_K:=A|_K$. We let $\mathrm{st}(K)$ denote the collection of cells $\hat{K}\in \mesh$ sharing at least one vertex with the cell $K$ ($\mathrm{st}(K)$ is often called cell star). Similarly, $\mathrm{es}(K)$ denotes the collection of cells $\hat{K}\in \mesh$ sharing at least one vertex with $\mathrm{st}(K)$ ($\mathrm{es}(K)$ is often called extended cell star). For all $k\ge0$, $\mathbb{P}^{k}(K)$ denotes the space of $d$-variate polynomials on $K$ of degree at most $k$, and $\Pi_K^k$ denotes the $L^2$-orthogonal projection onto $\mathbb{P}^{k}(K)$. Moreover, $\mathbb{P}^{k}(\mesh):=\{ v_h\in L^2(\Omega)\,|\, v_h|_K\in\mathbb{P}^{k}(K)\}$ denotes the broken polynomial space of order $k$ on the mesh $\mesh$ (classically considered in dG methods).

The mesh faces are collected in the set $\Fall$, which is split as $\Fall = \Fint \cup \Fb$, where $\Fint$ is the collection of interfaces (shared by two distinct mesh cells) and  $\Fb$ the collection
of boundary faces. Moreover, we split $\Fb$ into the Dirichlet subset, $\FD$, and the Neumann subset, $\FN$. Let $\n_F$ denote the unit normal vector orienting the mesh face $F\in\Fall$. For all $F\in \Fint$, the direction of $\n_F$ is arbitrary, but fixed, whereas we set $\n_F:=\n_{\Omega}|_F$ for all $F\in\Fb$. For every mesh cell $K\in\mesh$, the partition of its boundary $\dK$ is defined as $\dK=\dKi\cup\dKD\cup\dKN$ with obvious notation, and the mesh faces composing $\dK$ are collected in the set $\FK$, which is partitioned as $\FK=\FKi \cup \FKD \cup \FKN$ with obvious notation. For all $F\in\Fall$, $\Pi_F^k$ denotes
the $L^2$-orthogonal projection onto $\mathbb{P}^k(F)$.

The set of mesh vertices  is denoted by $\vertice$ and is decomposed into the subset of interior vertices, $\verticei$, and the subset of boundary vertices, $\verticeb$. For all $\ba\in \vertice$, $\mesha$ denotes the collection of mesh cells which share $\ba$ and $\oma$ the corresponding open subdomain (often called vertex star). In addition, we define $\Fa$ as the collection of faces in $\Fall$ which share $\ba$.

For all $s>\frac12$, we define the broken Sobolev spaces $H^s(\mesh;\mathbb{R}^q):= \{w\in L^2(\Omega;\mathbb{R}^q)\:|\:w_K:=w|_K\in H^s(K;\mathbb{R}^q), ~ \forall K\in \mesh\}$, $q\in\{1,d\}$. We define the jump $\jump{w}$ of any function $w\in H^s(\mesh; \mathbb{R}^q)$ across any mesh interface $F=\dK_1 \cap\dK_2\in \Fint$ as  $\jump{w} : = w_{K_1}|_F -  w_{K_2}|_F$, where  $\n_F$ points from $K_1$ to $K_2$. For any boundary face $F=\dK \cap \partial \Omega\in \Fb$, we set $\jump{w} : = w_{K}|_F$. For all $K\in\mesh$, we define $\jumpK{w}|_F:=\jump{w}$ for all $F\in \FK$. Finally, we define the broken gradient $\bnabla$ as the gradient operator acting  cellwise on $H^{1}(\mesh;\mathbb{R}^q)$.

\subsection{$hp$-analysis tools}\label{Analysis tools}

Let us briefly review the main $hp$-analysis tools used in this work.
We use the symbol $C$ (sometimes with a subscript) to denote any positive generic constant whose value can change at each occurrence as long as it is independent of the mesh size $h$ and the  polynomial degree $k$. The value of $C$ can  depend on the mesh shape-regularity and the space dimension $d$.

\begin{lemma}[Discrete trace inequality]\label{lemma: Inverse inequality}
The following holds for all $v\in\mathbb{P}^k(K)$, all $K\in\mesh$, and all $k\ge0$,
\begin{equation}\label{trace_inv}
\|v\|_{\dK} \leq C \frac{ (k+1)}{ h_K^{\frac12}}  \|v\|_{ K},
\end{equation}
\end{lemma}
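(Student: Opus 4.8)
The plan is to transport the estimate to a fixed reference simplex by an affine map, prove there the polynomial trace inequality with the \emph{sharp} dependence on the degree, and scale back using mesh shape-regularity. Since $\|v\|_{\dK}^2=\sum_{F\in\FK}\|v\|_F^2$ and a simplex has exactly $d+1$ faces, it suffices to bound $\|v\|_F$ for a single face $F$ of $K$ and then sum.

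\textbf{Step 1 (reduction to the reference element).} Let $\hat K$ be a fixed reference simplex and $T_K(\hat\bx)=J_K\hat\bx+\bc_K$ an affine bijection $T_K:\hat K\to K$. For $v\in\mathbb{P}^k(K)$, set $\hat v:=v\circ T_K\in\mathbb{P}^k(\hat K)$. A change of variables gives $\|v\|_K^2=|\det J_K|\,\|\hat v\|_{\hat K}^2$ and, for the face $F=T_K(\hat F)$, $\|v\|_F^2=\gamma_F\,\|\hat v\|_{\hat F}^2$, where $\gamma_F>0$ is the (constant) ratio of the $(d{-}1)$-dimensional measures of $F$ and $\hat F$. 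Mesh shape-regularity yields $\|J_K\|\le C h_K$ and $\|J_K^{-1}\|\le C h_K^{-1}$, hence $|\det J_K|\ge C h_K^{d}$ and $\gamma_F\le C h_K^{d-1}$, with $C$ depending only on the shape-regularity parameter and $d$.

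\textbf{Step 2 (polynomial trace inequality on $\hat K$).} The core point is that
\[
\|\hat v\|_{\hat F}\le C\,(k+1)\,\|\hat v\|_{\hat K}\qquad\text{for all }\hat v\in\mathbb{P}^k(\hat K),
\]
with $C$ depending only on $d$; this is the classical $hp$-trace inverse inequality on simplices (see, e.g., \cite{DGpolybook} and the references therein, in particular the sharp constant of Warburton and Hesthaven, which reads $\|\hat v\|_{\hat F}^2\le\frac{(k+1)(k+d)}{d}\frac{|\hat F|}{|\hat K|}\|\hat v\|_{\hat K}^2\le C(k+1)^2\|\hat v\|_{\hat K}^2$). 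For a self-contained argument one may, by symmetry, fix $\hat F$ to be one coordinate face, expand $\hat v$ in the Koornwinder--Dubiner $L^2(\hat K)$-orthogonal basis (tensorized Jacobi polynomials in collapsed coordinates), observe that the trace on $\hat F$ involves only the endpoint values of the one-dimensional Jacobi factors, insert the sharp endpoint bounds on $|P_n^{(\alpha,\beta)}|$, and conclude by Cauchy--Schwarz over the degree-graded coefficients; the resulting telescoping sum produces the linear factor $(k+1)$ rather than the $(k+1)^2$ that a cruder argument would give.

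\textbf{Step 3 (conclusion).} Combining the two steps,
\[
\|v\|_F^2=\gamma_F\|\hat v\|_{\hat F}^2\le C\,h_K^{d-1}(k+1)^2\|\hat v\|_{\hat K}^2=C\,h_K^{d-1}(k+1)^2\frac{\|v\|_K^2}{|\det J_K|}\le C\,\frac{(k+1)^2}{h_K}\|v\|_K^2 .
\]
Summing over the $d+1$ faces composing $\dK$ and taking square roots yields \eqref{trace_inv}. The only genuinely delicate ingredient is Step 2, namely securing the \emph{linear} rather than quadratic growth in $k$ in the reference-element trace estimate; the affine change of variables and the shape-regularity bounds in Steps 1 and 3 are routine.
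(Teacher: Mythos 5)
Your proof is correct and rests on the same key ingredient as the paper, which simply cites \cite{warburton2003constants} for the sharp reference-element trace constant $\tfrac{(k+1)(k+d)}{d}\tfrac{|\hat F|}{|\hat K|}$ that you invoke in Step~2; your Steps~1 and~3 just make explicit the routine affine-scaling and shape-regularity bookkeeping that turns that estimate into \eqref{trace_inv}.
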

\begin{proof}
A proof can be found in \cite[Theorem 5]{warburton2003constants} (with explicit constant in terms of $d$).
\end{proof}


\begin{lemma}[Local $L^2$-orthogonal projection]\label{lemma: L^2 projection}
The following holds for $v\in H^1(K)$, all $K\in\mesh$, and all $k\geq0$,
\begin{equation}\label{L2 projection Polynomial approximation}
\|{v} - \Pi^{k}_{ K} (v)\|_{\partial K}
\leq C \left(\frac{h_K}{k+1}\right)^{\frac12} |{v} |_{H^1(K)}.
\end{equation}
\end{lemma}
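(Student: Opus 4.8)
The plan is to combine a (scaled) multiplicative trace inequality on $K$ with two $hp$-properties of the $L^2$-orthogonal projection $\Pi_K^k$, namely its $L^2$-approximation estimate and its $H^1$-stability, both uniform in $h_K$ and $k$ on shape-regular simplices.

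\emph{Step 1 (multiplicative trace inequality).} For any $w\in H^1(K)$ and any shape-regular simplex $K$, one has $\|w\|_{\partial K}^2 \leq C\bigl(h_K^{-1}\|w\|_{K}^2 + \|w\|_{K}\,|w|_{H^1(K)}\bigr)$, with $C$ depending only on the mesh shape-regularity and $d$. This follows from the standard trace inequality on the reference simplex together with an affine change of variables, or directly by integrating $\nabla\cdot(w^2\,\boldsymbol{V})$ over $K$ for a suitable fixed vector field $\boldsymbol{V}$. (Alternatively, one may pass to the reference simplex from the outset, using that $\Pi_K^k$ commutes with affine pullbacks, so that after scaling it suffices to prove the estimate $\|\hat v - \hat\Pi^k\hat v\|_{\partial\hat K}\leq C(k+1)^{-1/2}|\hat v|_{H^1(\hat K)}$ on the fixed reference element.)

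\emph{Step 2 (approximation and stability of $\Pi_K^k$).} We invoke two bounds valid for all $v\in H^1(K)$, all $K\in\mesh$, and all $k\ge0$, with $C$ independent of $h$ and $k$: the $hp$-$L^2$-approximation estimate $\|v - \Pi_K^k v\|_{K} \leq C\,\frac{h_K}{k+1}\,|v|_{H^1(K)}$, which follows from the $L^2$-optimality of $\Pi_K^k$ and a classical Babu\v{s}ka--Suri-type $hp$-polynomial approximation result on the simplex; and the $H^1$-(semi)stability of the $L^2$-orthogonal projection, $|v - \Pi_K^k v|_{H^1(K)} \leq C\,|v|_{H^1(K)}$, which follows, after affine scaling, from the $k$-uniform $H^1$-stability of the $L^2$-orthogonal projection onto $\mathbb{P}^k$ on the reference simplex.

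\emph{Step 3 (conclusion).} Applying Step 1 with $w = v - \Pi_K^k v$ and inserting the two bounds of Step 2 gives $\|v - \Pi_K^k v\|_{\partial K}^2 \leq C\bigl(h_K^{-1}\tfrac{h_K^2}{(k+1)^2}|v|_{H^1(K)}^2 + \tfrac{h_K}{k+1}|v|_{H^1(K)}^2\bigr) \leq C\,\tfrac{h_K}{k+1}\,|v|_{H^1(K)}^2$, and taking square roots yields the claim. The main obstacle is the $k$-uniform $H^1$-stability of $\Pi_K^k$ used in Step 2: every other ingredient is a routine scaling or a classical approximation estimate, but this stability result is genuinely non-elementary and is exactly what makes the bound $p$-sharp. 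Indeed, if one instead writes $v - \Pi_K^k v = (I-\Pi_K^k)(v-q)$ for $q\in\mathbb{P}^k(K)$ and controls the polynomial part $\Pi_K^k(v-q)$ through the discrete trace inequality of Lemma~\ref{lemma: Inverse inequality}, the factor $\tfrac{k+1}{h_K^{1/2}}$ there combines with the $O(h_K/(k+1))$ best-approximation error to yield only the $p$-suboptimal bound $C h_K^{1/2}|v|_{H^1(K)}$; the half-order gain in $k$ is recovered precisely by trading that inverse estimate for the multiplicative trace inequality plus the $H^1$-control of the projection error.
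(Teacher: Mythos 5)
Your argument is correct, and since the paper proves this lemma by citation only (to Chernov and to Melenk--Wurzer), there is no in-paper proof to compare against; what you have written is essentially the standard derivation found in the cited literature. The chain multiplicative trace inequality $+$ $L^2$-best-approximation $+$ $H^1$-stability of $\Pi_K^k$ is sound, the arithmetic in Step~3 checks out (both terms come to $C\,h_K(k+1)^{-1}|v|_{H^1(K)}^2$), and your closing remark correctly explains why the naive route through the discrete trace inequality of Lemma~\ref{lemma: Inverse inequality} forfeits the factor $(k+1)^{-1/2}$. The one caveat worth stating plainly: the $k$-uniform $H^1$-stability of the $L^2$-orthogonal projection on triangles and tetrahedra, which you invoke in Step~2 and rightly flag as the crux, is not a ``classical'' scaling fact in $d\ge 2$ --- it is itself the main theorem of the Melenk--Wurzer reference the paper cites, from which the boundary-trace estimate~\eqref{L2 projection Polynomial approximation} is then deduced by exactly your multiplicative-trace argument. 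So your proof is best read as a correct reduction of the lemma to that stability theorem rather than as a self-contained proof; since the paper itself delegates the entire lemma to those references, this is an acceptable (and in fact faithful) account, but you should cite the stability result explicitly rather than describe it as following ``after affine scaling'' from something routine. A minor point: the stability in the references is stated for the full $H^1$-norm on the reference element; to get the seminorm form you use (which is also what makes the estimate scale correctly in $h_K$), apply it to $v$ minus its mean and use that $\Pi_K^k$ preserves constants together with the Poincar\'e inequality.
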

\begin{proof}
A proof can be found in \cite{Chernov12,Melenkurzer14}.
\end{proof}

\begin{lemma}[Local Babu\v{s}ka--Suri operator]\label{lemma: hp-Polynomial approximation}
There exists a positive constant $C_{\rm BS}$ such that, for all $k\geq1$ and all $K\in\mesh$, there exists an operator $\mathcal{I}^{k}_{{\rm BS},K} :L^2(K)\rightarrow \mathbb{P}^{k}(K)$, called Babu\v{s}ka--Suri approximation operator, such that,
for all $r\in\{0,\ldots,k\}$, all $m\in \{0,\ldots, r\}$, and all $v\in H^r(K)$,
\begin{equation}\label{Polynomial approximation}
|{v} - \mathcal{I}^{k}_{{\rm BS},K} (v)|_{H^m(K)}
\leq C_{\rm BS} \left(\frac{h_K}{k}\right)^{r-m} \|{v} \|_{H^r(K)}.
\end{equation}
\end{lemma}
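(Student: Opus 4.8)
The plan is to obtain the estimate by transporting a corresponding $hp$-approximation estimate from the reference simplex to the physical cell $K$ via an affine change of variables, and then controlling the geometric factors by mesh shape-regularity. First I would fix the reference $d$-simplex $\hat{K}$ and write $F_K\colon \hat{K}\to K$ for the affine bijection with Jacobian matrix $B_K$; by shape-regularity, $\|B_K\|\le C h_K$, $\|B_K^{-1}\|\le C h_K^{-1}$ and $C^{-1}h_K^{d}\le|\det B_K|\le C h_K^{d}$. The chain rule together with these bounds gives, for every integer $\ell\ge0$ and every $w\in H^\ell(K)$,
\[
C^{-1}\, h_K^{\ell-\frac{d}{2}}\, |w|_{H^\ell(K)} \;\le\; |w\circ F_K|_{H^\ell(\hat{K})} \;\le\; C\, h_K^{\ell-\frac{d}{2}}\, |w|_{H^\ell(K)}.
\]
For $v\in L^2(K)$ I would then set $\hat{v}:=v\circ F_K$ and define $\mathcal{I}^k_{{\rm BS},K}(v):=\big(\hat{\mathcal{I}}^k_{{\rm BS}}(\hat{v})\big)\circ F_K^{-1}$ for a reference operator $\hat{\mathcal{I}}^k_{{\rm BS}}\colon L^2(\hat{K})\to\mathbb{P}^k(\hat{K})$ specified next; since $F_K$ is affine, $\mathcal{I}^k_{{\rm BS},K}(v)\in\mathbb{P}^k(K)$, as required.

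For the reference operator I would follow the classical Babu\v{s}ka--Suri construction: either expand $\hat v\in H^r(\hat K)$ in a basis of orthogonal polynomials on the simplex and truncate at total degree $\le k$, or extend $\hat v$ to $H^r(Q)$ on a cube $Q\supset\hat K$ by a Sobolev extension $E$ with $\|E\hat v\|_{H^r(Q)}\le C\|\hat v\|_{H^r(\hat K)}$, expand in a tensor-product Legendre (or Chebyshev) basis on $Q$, keep the total-degree-$\le k$ part, and restrict back to $\hat K$. In either case this produces a linear operator $\hat{\mathcal{I}}^k_{{\rm BS}}$ that is $L^2(\hat K)$-stable, reproduces $\mathbb{P}^k(\hat K)$, and satisfies, for all integers $0\le m\le r\le k$, a bound of the form $|\hat v-\hat{\mathcal{I}}^k_{{\rm BS}}(\hat v)|_{H^m(\hat K)}\le C\, k^{-(r-m)}\|\hat v\|_{H^r(\hat K)}$ with $C$ independent of $k,r,m,\hat v$; the proof rests on the decay of the orthogonal-polynomial coefficients of $\hat v$ (resp.\ $E\hat v$) measured in the (weighted) $H^r$-norm. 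Exploiting the reproduction of $\mathbb{P}^{r-1}(\hat K)\subseteq\mathbb{P}^k(\hat K)$, applying this bound to $\hat v-q$ for arbitrary $q\in\mathbb{P}^{r-1}(\hat K)$, and invoking the Bramble--Hilbert lemma on the fixed domain $\hat K$ in the form $\inf_{q\in\mathbb{P}^{r-1}(\hat K)}\|\hat v-q\|_{H^r(\hat K)}\le C\,|\hat v|_{H^r(\hat K)}$, I would upgrade it to the seminorm estimate
\[
|\hat v-\hat{\mathcal{I}}^k_{{\rm BS}}(\hat v)|_{H^m(\hat K)}\;\le\; C\, k^{-(r-m)}\, |\hat v|_{H^r(\hat K)},\qquad 0\le m\le r\le k.
\]

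Combining the two ingredients, for $r\ge1$ and $v\in H^r(K)$ I would chain the scaling estimates with the reference bound:
\[
\begin{aligned}
|v-\mathcal{I}^k_{{\rm BS},K}(v)|_{H^m(K)}
&\le C\, h_K^{\frac{d}{2}-m}\, |\hat v-\hat{\mathcal{I}}^k_{{\rm BS}}(\hat v)|_{H^m(\hat K)}
\le C\, h_K^{\frac{d}{2}-m}\, k^{-(r-m)}\, |\hat v|_{H^r(\hat K)}\\
&\le C\, h_K^{\frac{d}{2}-m}\, k^{-(r-m)}\, h_K^{r-\frac{d}{2}}\, |v|_{H^r(K)}
= C\left(\frac{h_K}{k}\right)^{r-m}|v|_{H^r(K)},
\end{aligned}
\]
and conclude with $|v|_{H^r(K)}\le\|v\|_{H^r(K)}$; the case $r=0$ reduces to the $L^2(\hat K)$-stability of $\hat{\mathcal{I}}^k_{{\rm BS}}$, hence of $\mathcal{I}^k_{{\rm BS},K}$. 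Taking $C_{\rm BS}$ as the resulting constant (depending only on $d$ and the shape-regularity parameter) proves the claim. The hard part is the reference-element estimate in the previous paragraph: extracting the sharp factor $k^{-(r-m)}$ uniformly in $r$ and $m$ requires careful control of the truncated orthogonal expansion (and, in the extension route, of the behaviour of the Sobolev extension in integer-order Sobolev norms). In contrast, the affine scaling and the Bramble--Hilbert upgrade are routine; it is the reference estimate that is available in the needed form only on the simplex (and the cube), which is why the analysis is restricted to simplicial meshes.
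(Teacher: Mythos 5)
Your proposal is correct and follows essentially the same route as the paper, which for this lemma simply cites Babu\v{s}ka--Suri (1987): affine scaling to the reference simplex, the classical extension-plus-truncated-orthogonal-expansion construction on the reference element, and a Bramble--Hilbert upgrade to the $H^r$-seminorm (which, as you rightly note, is needed to recover the correct power of $h_K$ after scaling back, even though the final statement carries the full norm). The only point to watch is that the extension route does not automatically yield an operator reproducing $\mathbb{P}^{k}(\hat{K})$, which your $\hat v - q$ argument requires; this is a standard technicality resolved in the cited construction and does not affect the validity of your outline.
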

\begin{proof}
A proof can be found in \cite{babuvska1987optimal}.
\end{proof}

\begin{lemma}[Global $hp$-Karkulik--Melenk operator]\label{lemma: hp-KM orginal}
There exists a constant $C_{\rm KM}$ such that, for all $k\geq1$, there exists an operator $\IKM: H^{1}_{0,\ddd}(\Omega) \rightarrow  \mathbb{P}^{k}(\mesh) \cap H^{1}_{0,\ddd} (\Omega)$, called Karkulik--Melenk interpolation operator, such that, for all $v\in H^{1}_{0,\ddd}(\Omega)$ and all $K\in\mesh$,
\begin{equation}\label{eq: KM approxiamtion}
\Big(\frac{k}{h_K}\Big)^{2}\|{v} - \IKM (v)\|^2_{K} + \Big(\frac{k}{h_K}\Big)\|{v} - \IKM (v)\|^2_{\partial K}  + \|\nabla \IKM (v)\|^2_{K}
 \leq C_{\rm KM} \|v\|^2_{H^1(\mathrm{st}(K))}.
\end{equation}
\end{lemma}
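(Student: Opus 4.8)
My plan is to take $\IKM$ to be a Scott--Zhang-type quasi-interpolation operator onto the conforming space $\mathbb{P}^{k}(\mesh)\cap H^{1}_{0,\ddd}(\Omega)$ with $hp$-explicit bounds, as constructed by Karkulik and Melenk. Its construction associates to each Lagrange node of the global $\mathbb{P}^{k}$ finite element a single mesh face (or cell) containing that node, chosen inside $\Gamma_{\ddd}$ for nodes lying on $\Gamma_{\ddd}$, and sets the corresponding coefficient of $\IKM(v)$ by pairing $v$ with an $L^{2}$-biorthogonal basis supported on that entity. This makes the functionals well defined for $v\in H^{1}(\Omega)$ (they are face/cell $L^{2}$-averages, so no point values are needed), forces $\IKM(v)|_{\Gamma_{\ddd}}=0$ whenever $v\in H^{1}_{0,\ddd}(\Omega)$, makes $\IKM$ reproduce the target space, and ensures that $\IKM(v)|_{K}$ depends only on $v|_{\mathrm{st}(K)}$. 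From the analysis of this operator one has the two volume estimates
\[
\|v-\IKM(v)\|_{K}\leq C\,\frac{h_{K}}{k}\,\|v\|_{H^{1}(\mathrm{st}(K))},\qquad
\|\nabla\IKM(v)\|_{K}\leq C\,\|v\|_{H^{1}(\mathrm{st}(K))}.
\]
The standard route to these is to subtract a good local polynomial approximant of $v$ on the cell star (for instance the Babu\v{s}ka--Suri operator of Lemma~\ref{lemma: hp-Polynomial approximation}), use that $\IKM$ reproduces polynomials so that only $\IKM$ applied to the approximation error appears, and then control that term through the $L^{2}$-stability of the biorthogonal functionals and the inverse inequality of Lemma~\ref{lemma: Inverse inequality}, which is exactly what generates the $(k/h_{K})$ weights; summing over the finitely many cells of $\mathrm{st}(K)$ (shape-regularity) closes the argument.

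It then remains to bound the face term, and here I would deliberately avoid applying the discrete trace inequality of Lemma~\ref{lemma: Inverse inequality} to $v-\IKM(v)$: that function is not a polynomial, and splitting $v-\IKM(v)=(v-\Pi_{K}^{k}v)+(\Pi_{K}^{k}v-\IKM(v))$ and applying Lemma~\ref{lemma: Inverse inequality} to the polynomial part would cost a spurious power of $k$ (one only controls $\|\Pi_{K}^{k}v-\IKM(v)\|_{K}$ by $C(h_{K}/k)\|v\|_{H^{1}(\mathrm{st}(K))}$, and the trace inequality then multiplies this by $(k+1)h_{K}^{-1/2}$). Instead I would use the multiplicative trace inequality $\|w\|_{\partial K}^{2}\leq C\bigl(h_{K}^{-1}\|w\|_{K}^{2}+\|w\|_{K}\,\|\nabla w\|_{K}\bigr)$, valid for $w\in H^{1}(K)$, with $w=v-\IKM(v)$; combined with the two volume estimates above (and $|v-\IKM(v)|_{H^{1}(K)}\leq\|\nabla v\|_{K}+\|\nabla\IKM(v)\|_{K}\leq C\|v\|_{H^{1}(\mathrm{st}(K))}$) this gives $\|v-\IKM(v)\|_{\partial K}^{2}\leq C\,(h_{K}k^{-2}+h_{K}k^{-1})\|v\|_{H^{1}(\mathrm{st}(K))}^{2}\leq C\,(h_{K}/k)\,\|v\|_{H^{1}(\mathrm{st}(K))}^{2}$ for $k\geq1$, hence $(k/h_{K})\|v-\IKM(v)\|_{\partial K}^{2}\leq C\|v\|_{H^{1}(\mathrm{st}(K))}^{2}$. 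Adding the three contributions yields \eqref{eq: KM approxiamtion}.

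The only genuinely delicate point is the $hp$-explicit stability of the Scott--Zhang dual functionals: one must show that the $L^{2}$-norms of the biorthogonal basis functions on the reference face/cell grow at most like a fixed power of $k$, so that after scaling and applying the inverse inequality exactly the weights $(k/h_{K})$ and $(k/h_{K})^{2}$ appear and nothing worse. This requires choosing the local bases carefully (scaled orthogonal polynomials) and is precisely the ingredient one imports from the Karkulik--Melenk construction; polynomial reproduction, locality to $\mathrm{st}(K)$, the Dirichlet compatibility, and the face estimate above are then routine given the $hp$-tools of Section~\ref{Analysis tools}.
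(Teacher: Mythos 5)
The paper does not actually prove this lemma: its ``proof'' is a one-line citation of Melenk's $hp$-interpolation paper and of Karkulik--Melenk, so there is no argument in the paper to compare against step by step. Judged on its own terms, your sketch has one genuinely valuable and correct piece: the derivation of the boundary estimate from the two volume estimates via the multiplicative trace inequality. Indeed, with $w=v-\IKM(v)$, $\|w\|_K\le C(h_K/k)\|v\|_{H^1(\mathrm{st}(K))}$ and $\|\nabla w\|_K\le C\|v\|_{H^1(\mathrm{st}(K))}$, the bound $\|w\|_{\partial K}^2\le C(h_K^{-1}\|w\|_K^2+\|w\|_K\|\nabla w\|_K)$ gives $(k/h_K)\|w\|_{\partial K}^2\le C(k^{-1}+1)\|v\|_{H^1(\mathrm{st}(K))}^2$, which is what is needed; and you are right that applying the polynomial trace inequality of Lemma~\ref{lemma: Inverse inequality} to the polynomial part of a splitting would lose a half power of $k$. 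This correctly reduces the lemma to the two volume estimates.

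The gap is in how you propose to obtain those volume estimates. You describe $\IKM$ as a Scott--Zhang operator with $L^2$-biorthogonal dual functionals attached to Lagrange nodes, and you present polynomial reproduction plus dual-functional stability plus the inverse inequality as ``routine,'' isolating only the growth of the biorthogonal basis norms as delicate. In fact that delicate point is exactly where a naive $hp$-Scott--Zhang construction breaks down: the $L^2$-norms of functionals biorthogonal to a high-order Lagrange (or even modal) basis do not in general stay under control well enough to yield the $p$-\emph{optimal} weights $(k/h_K)$ and $(k/h_K)^2$, and this is precisely why the cited construction is not a Scott--Zhang operator at all --- it proceeds by $p$-optimal local polynomial approximation on patches followed by a conforming correction built from polynomial lifting/extension operators on faces, edges, and vertices. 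So the heart of the lemma (the existence of a \emph{conforming}, boundary-condition-preserving quasi-interpolant with $p$-optimal $L^2$ and $H^1$ bounds) is asserted rather than proved in your sketch, and the particular construction you outline would not deliver it without substantial further work. Since the paper itself only cites the result, this does not put you behind the paper, but your write-up should either import the volume estimates explicitly as a black box from the references (as the paper does) or follow the actual lifting-based construction rather than the Scott--Zhang recipe.
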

\begin{proof}
A proof can be found in \cite{melenk_rough,KarKulikMelenk15}.
\end{proof}

\begin{corollary}[Modified $hp$-Karkulik--Melenk operator]\label{cor: global interpolation of KM}
There exists a constant $C_{\rm mKM}$ such that, for all $k\geq1$, there exists an operator
$\ImKM:H^{1}_{0,\ddd}(\Omega) \rightarrow  \mathbb{P}^{k}(\mesh) \cap H^{1}_{0,\ddd} (\Omega)$, called modified Karkulik--Melenk interpolation operator, such that, for all $v\in H^{1}_{0,\ddd}(\Omega)$ and all $K\in\mesh$,
\begin{equation}\label{eq: Modified KM approximation}
\Big(\frac{k}{h_K}\Big)^{2}\|{v} - \ImKM (v)\|^2_{K} + \Big(\frac{k}{h_K}\Big)\|{v} - \ImKM (v)\|^2_{\partial K}  + \|\nabla \ImKM (v)\|^2_{K}
 \leq C_{\rm mKM} \| \nabla v\|^2_{\mathrm{es}(K)}.
\end{equation}
\end{corollary}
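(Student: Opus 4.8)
The plan is to construct $\ImKM$ from the Karkulik--Melenk operator $\IKM$ of Lemma~\ref{lemma: hp-KM orginal} by subtracting, \emph{before} applying $\IKM$, a piecewise-affine function that captures the ``constant part'' of $v$ near each vertex. Such a preconditioning is genuinely needed: the left-hand side of \eqref{eq: Modified KM approximation} does not vanish when $v$ is constant on $\mathrm{es}(K)$ unless the operator reproduces that constant on $K$, which $\IKM$ does not. Concretely, let $\{\psi_{\ba}\}_{\ba\in\vertice}$ be the hat basis functions (a nonnegative partition of unity, $\sum_{\ba\in\vertice}\psi_{\ba}=1$ on $\overline{\Omega}$), and for $\ba\in\vertice$ set $c_{\ba}:=\tfrac{1}{|\oma|}\int_{\oma}v$ if $\ba\notin\verticeD$ and $c_{\ba}:=0$ if $\ba\in\verticeD$. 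I would then define
\[
v^{\flat}:=\sum_{\ba\in\vertice\setminus\verticeD}c_{\ba}\,\psi_{\ba},
\qquad
\ImKM(v):=v^{\flat}+\IKM\!\left(v-v^{\flat}\right).
\]
Because $\psi_{\ba}$ vanishes on $\Gamma_{\ddd}$ for every $\ba\notin\verticeD$, one has $v^{\flat}\in\mathbb{P}^1(\mesh)\cap H^1_{0,\ddd}(\Omega)\subseteq\mathbb{P}^{k}(\mesh)\cap H^1_{0,\ddd}(\Omega)$ (this is where $k\ge1$ is used) and $v-v^{\flat}\in H^1_{0,\ddd}(\Omega)$, so $\ImKM$ is well defined and maps into $\mathbb{P}^{k}(\mesh)\cap H^1_{0,\ddd}(\Omega)$ by Lemma~\ref{lemma: hp-KM orginal}.

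Writing $w:=v-v^{\flat}$, the identities $v-\ImKM(v)=w-\IKM(w)$ and $\nabla\ImKM(v)=\nabla v^{\flat}+\nabla\IKM(w)$, together with \eqref{eq: KM approxiamtion} applied to $w$, reduce the whole estimate to two cell-local bounds, namely $\|\nabla v^{\flat}\|_{\hat K}\le C\|\nabla v\|_{\mathrm{st}(\hat K)}$ and $\|w\|_{H^1(\hat K)}\le C\|\nabla v\|_{\mathrm{st}(\hat K)}$ for all $\hat K\in\mesh$, with $C$ depending only on shape-regularity, $d$, and $\Omega$; taking $\hat K=K$ and summing over $\hat K\in\mathrm{st}(K)$, using bounded overlap and $\mathrm{es}(K)=\bigcup_{\hat K\in\mathrm{st}(K)}\mathrm{st}(\hat K)$, then gives \eqref{eq: Modified KM approximation}. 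To prove the cell-local bounds I would argue as follows. Denoting by $\verticeK$ the vertices of $K$ and using $\sum_{\ba\in\verticeK}\psi_{\ba}=1$ on $K$ with $c_{\ba}=0$ on $\verticeD$, one gets $w|_{K}=\sum_{\ba\in\verticeK}(v-c_{\ba})\psi_{\ba}$, hence $\|w\|_{K}\le\sum_{\ba\in\verticeK}\|v-c_{\ba}\|_{\oma}$; the Poincaré--Wirtinger inequality on the star $\oma$ (for $\ba\notin\verticeD$) and the Poincaré--Friedrichs inequality on $\oma$ (for $\ba\in\verticeD$, valid since $\partial\oma\cap\Gamma_{\ddd}$ then has positive surface measure) give $\|v-c_{\ba}\|_{\oma}\le C\diam(\oma)\|\nabla v\|_{\oma}\le Ch_K\|\nabla v\|_{\mathrm{st}(K)}$. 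For the gradient, $\nabla v^{\flat}|_{K}=\sum_{\ba\in\verticeK}c_{\ba}\nabla\psi_{\ba}$ with $\sum_{\ba\in\verticeK}\nabla\psi_{\ba}=0$, so one may replace $c_{\ba}$ by $c_{\ba}-c_{\ba_0}$ for a reference vertex $\ba_0\in\verticeK$ (picking $\ba_0\in\verticeD$, so $c_{\ba_0}=0$, whenever $K$ abuts $\Gamma_{\ddd}$); with $\|\nabla\psi_{\ba}\|_{L^\infty(K)}\le Ch_K^{-1}$ and $|K|\le Ch_K^{d}$ it remains to check $|c_{\ba}-c_{\ba_0}|\le Ch_K^{1-d/2}\|\nabla v\|_{\mathrm{st}(K)}$, which follows by comparing the local means $c_{\ba}$ to the mean of $v$ over $\mathrm{st}(K)$ via Poincaré--Wirtinger on $\mathrm{st}(K)$ (and via Poincaré--Friedrichs on $\mathrm{st}(K)$ when $c_{\ba_0}=0$, using that $\mathrm{st}(K)$ meets $\Gamma_{\ddd}$ on a set of positive measure whenever $K$ does). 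This yields $\|\nabla v^{\flat}\|_{K}\le C\|\nabla v\|_{\mathrm{st}(K)}$, hence also $\|\nabla w\|_{K}\le\|\nabla v\|_{K}+\|\nabla v^{\flat}\|_{K}\le C\|\nabla v\|_{\mathrm{st}(K)}$, and since $h_K\le\diam(\Omega)$ the two cell-local bounds follow.

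The one genuinely delicate step is the treatment of stars touching $\Gamma_{\ddd}$: keeping $v^{\flat}\in H^1_{0,\ddd}(\Omega)$ forces $c_{\ba}=0$ on $\verticeD$, which destroys the ``$c_{\ba}$ is a mean of $v$'' structure precisely on the cells near the Dirichlet boundary, so there the control of $v-c_{\ba}$ and of $c_{\ba}-c_{\ba_0}$ must be supplied by a Friedrichs-type inequality rather than a Wirtinger-type one; the point to be verified is that $\oma$ (resp.\ $\mathrm{st}(K)$) meets $\Gamma_{\ddd}$ on a surface set whose measure is comparable (in a shape-regularity-dependent way) to $h_K^{d-1}$, which is where the simplicial, shape-regular structure and the compatibility of $\mesh$ with the boundary partition are used. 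Everything else should be routine: the bounded overlap of cell stars, the $h$- and $k$-independence of all constants (the only $k$-factors, $(k/h_K)^2$ and $k/h_K$, are inherited unchanged from \eqref{eq: KM approxiamtion}, since the correction $v^{\flat}$ affects only polynomial-degree-free quantities), and the covering identity $\mathrm{es}(K)=\bigcup_{\hat K\in\mathrm{st}(K)}\mathrm{st}(\hat K)$.
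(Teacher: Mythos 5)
Your proposal is correct and takes essentially the same route as the paper, which also sets $\ImKM(v) := I^1(v) + \IKM\big(v - I^1(v)\big)$ for a first-order quasi-interpolant $I^1$ mapping into $\mathbb{P}^{1}(\mesh)\cap H^{1}_{0,\ddd}(\Omega)$ and enjoying the local bound $h_K^{-2}\|v-I^1(v)\|_K^2 + h_K^{-1}\|v-I^1(v)\|_{\partial K}^2 + \|\nabla I^1(v)\|_K^2 \le C\|\nabla v\|^2_{\mathrm{st}(K)}$, after which the conclusion follows exactly as in your reduction via \eqref{eq: KM approxiamtion}. The only difference is that the paper cites such a nodal-averaging operator from the literature, whereas you construct it explicitly as a Cl\'ement-type operator (vertex-star averages, set to zero at Dirichlet vertices) and verify its local stability by hand through Poincar\'e--Wirtinger and Friedrichs inequalities on the patches --- a self-contained but equivalent argument, with the Dirichlet-vertex issue correctly identified as the one point requiring care.
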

\begin{proof}
The idea is to set, for all $v\in H^{1}_{0,\ddd}(\Omega)$,
\[
\ImKM(v) : = I^1_{{\rm av},\ddd} (v) + \IKM (v -  I^1_{{\rm av},\ddd} (v) ),
\]
where the (first-order) nodal-averaging operator $I^1_{{\rm av},\ddd}:
H^{1}_{0,\ddd}(\Omega) \rightarrow  \mathbb{P}^{1}(\mesh) \cap H^{1}_{0,\ddd} (\Omega)$ is devised, e.g.,
in \cite{ErnGuermond:17} when $\Gamma_{\ddd}=\partial\Omega$ and in \cite{2017Smoothed} when $\Gamma_{\ddd}$ is
a proper subset of $\partial\Omega$. This operator satisfies, for all $v\in H^{1}_{0,\ddd}(\Omega)$,
\begin{equation}\label{eq: quasi approxiamtion}
\Big(\frac{1}{h_K}\Big)^{2}\|{v} - I^1_{{\rm av},\ddd} (v)\|^2_{K} + \Big(\frac{1}{h_K}\Big)\|{v} - I^1_{{\rm av},\ddd} (v)\|^2_{\partial K}  + \|\nabla I^1_{{\rm av},\ddd} (v)\|^2_{K}
\leq C \|\nabla v\|^2_{\mathrm{st}(K)}.
\end{equation}
We can now prove \eqref{eq: Modified KM approximation}. Using the approximation results \eqref{eq: KM approxiamtion} and \eqref{eq: quasi approxiamtion} and the mesh shape-regularity, we infer that
\begin{equation*}
\begin{split}
\Big(\frac{k}{h_K}\Big)^{2}\|{v} - \ImKM (v)\|^2_{K}  &= \Big(\frac{k}{h_K}\Big)^{2} \|({v} -  I^1_{{\rm av},\ddd} (v)) - \IKM (v -  I^1_{{\rm av},\ddd} (v) )\|^2_{K} \\
& \leq C \|v -  I^1_{{\rm av},\ddd} (v)\|^2_{H^1(\mathrm{st}(K))}
\leq C \sum_{\hat{K}\in \mathrm{st}(K)}  \|\nabla v\|^2_{\mathrm{st}(\hat{K})}
\leq  C \|\nabla v\|^2_{\mathrm{es}(K)}.
\end{split}
\end{equation*}
This proves the bound on the first term on the left-hand side of~\eqref{eq: Modified KM approximation}, and
the other two terms can be bounded in a similar way.
\end{proof}

\begin{remark}[Applications]
We use the Babu\v{s}ka--Suri approximation operator to establish our $hp$-a priori
error estimate. Instead, we use the modified Karkulik--Melenk interpolation operator in the $hp$-a posteriori error analysis to establish the upper error bound. The advantage of the modified Karkulik--Melenk interpolation operator with respect to the original one is to invoke only the $H^1$-seminorm on the right-hand side of~\eqref{eq: Modified KM approximation} (compare with~\eqref{eq: KM approxiamtion}).
\end{remark}

\section{HHO method} \label{sec:HHO}

Let $k\geq 0$ be the polynomial degree. We focus on the mixed-order HHO method where, for all $K\in \mesh$, the local HHO space is
\begin{equation}\label{def: HHO space}
\fesE: =\mathbb{P}^{k+1}(K)
\times
\mathbb{P}^{k}(\FK) ,\qquad \mathbb{P}^{k}(\FK) : = \bigtimes_{F\in \FK} \mathbb{P}^{k}(F).
\end{equation}
A generic element in  $\fesE$ is denoted by $\hat{v}_K := (v_K, v_{\dK})$ with $v_K \in \mathbb{P}^{k+1}(K)$ and $v_{\dK} \in \mathbb{P}^{k}(\FK)$. The first component of the pair $\hat{v}_K$ aims at representing the solution inside the
mesh cell and the second its trace at the cell boundary.

\subsection{Reconstruction and stabilization}

The HHO method is formulated locally by means of a reconstruction and a stabilization operator. The local reconstruction operator $R_K^{k+1}: \fesE \rightarrow \mathbb{P}^{k+1}(K)$ is such that, for all $\hat{v}_K:= (v_K,v_{\dK}) \in \fesE$,
$R_K^{k+1}(\hat{v}_K)\in \mathbb{P}^{k+1}(K)$ is determined by solving the following well-posed problem:
\begin{equation}\label{reconstruction}
\begin{aligned}
( \nabla R_K^{k+1}(\hat{v}_K), \nabla w)_K :={}& (\nabla  {v}_K, \nabla w)_K
- (v_K -v_{\dK} , \nabla  w {\cdot} \n_K)_{\dK},
\end{aligned}
\end{equation}
for all $w \in \mathbb{P}^{k+1}(K) / \mathbb{R}$ and $(R_K^{k+1}(\hat{v}_K), 1 )_K =  ( v_K,1 )_K$. (Notice that \eqref{reconstruction} actually holds for all $w \in \mathbb{P}^{k+1}(K)$). Integration by parts gives
\begin{equation}\label{eq:rec_ipp}
( \nabla R_K^{k+1}(\hat{v}_K), \nabla w)_K = -(v_K,  \Delta  w)_K
+ (v_{\dK} ,   \nabla  w {\cdot} \n_K)_{\dK}.
\end{equation}
The local stabilization bilinear form $S_{\dK}$ is defined such that, for all $(\hat{v}_K, \hat{w}_K)\in \fesE \times \fesE$,
\begin{equation}\label{def: stabilization}
\begin{aligned}
S_{\dK}(\hat{v}_K,\hat{w}_K)
:={}&\frac{ (k+1)^2}{h_K}
\big( v_{\dK}-\Pi^k_{\dK}(v_K|_{\dK}), w_{\dK}-\Pi^k_{\dK}(w_K|_{\dK})\big)_{\dK},
\end{aligned}
\end{equation}
where $\Pi^k_{\dK}$ denotes the $L^2$-orthogonal projection onto $\mathbb{P}^{k}(\FK)$.
The reconstruction and stabilization operators are combined together to build
the local bilinear form $a_K$ on $\fesE \times \fesE$ such that, for all $K\in \mesh$,
\begin{equation} \label{eq:def_aK}
a_K(\hat{v}_K,\hat{w}_K):= \diff_K ( \nabla R_K^{k+1}(\hat{v}_K),\nabla R_K^{k+1}(\hat{w}_K))_K
+ \diff_K S_{\dK}(\hat{v}_K,\hat{w}_K).
\end{equation}

\begin{lemma}[Useful property]
The following holds for all $\hat{v}_K \in \fesE$ and all $K\in\mesh$:
\begin{equation} \| \nabla (R_K^{k+1}(\hat{v}_K)-v_K) \|_{K}^2
\leq
C   S_{\dK}(\hat{v}_K,\hat{v}_K). \label{local HHO  bound}
\end{equation}
\end{lemma}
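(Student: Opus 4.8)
The plan is to use the defining relation \eqref{reconstruction} of the reconstruction operator with the specific test function $w = R_K^{k+1}(\hat{v}_K) - v_K \in \mathbb{P}^{k+1}(K)$, which is admissible since \eqref{reconstruction} holds for all $w\in\mathbb{P}^{k+1}(K)$ (not just modulo constants). First I would write
\[
\| \nabla (R_K^{k+1}(\hat{v}_K)-v_K) \|_{K}^2
= (\nabla R_K^{k+1}(\hat{v}_K), \nabla w)_K - (\nabla v_K, \nabla w)_K,
\]
and then substitute \eqref{reconstruction} into the first term to obtain
\[
\| \nabla (R_K^{k+1}(\hat{v}_K)-v_K) \|_{K}^2
= - (v_K - v_{\dK}, \nabla w\cdot\n_K)_{\dK}.
\]
The key observation is that $\nabla w\cdot\n_K|_{\dK}$ is a piecewise polynomial of degree at most $k$ on $\dK$, so the $L^2(\dK)$-orthogonal projection $\Pi^k_{\dK}$ leaves it invariant; hence $(v_K - v_{\dK}, \nabla w\cdot\n_K)_{\dK} = (\Pi^k_{\dK}(v_K - v_{\dK}), \nabla w\cdot\n_K)_{\dK}$.

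Next I would apply the Cauchy--Schwarz inequality on $\dK$:
\[
\| \nabla (R_K^{k+1}(\hat{v}_K)-v_K) \|_{K}^2
\le \|\Pi^k_{\dK}(v_{\dK}-v_K)\|_{\dK}\, \|\nabla w\cdot\n_K\|_{\dK}.
\]
For the second factor, since $\nabla w \in \mathbb{P}^{k}(K;\mathbb{R}^d)$ (componentwise), the discrete trace inequality of Lemma~\ref{lemma: Inverse inequality} gives $\|\nabla w\cdot\n_K\|_{\dK} \le \|\nabla w\|_{\dK} \le C\frac{k+1}{h_K^{1/2}}\|\nabla w\|_K = C\frac{k+1}{h_K^{1/2}}\| \nabla (R_K^{k+1}(\hat{v}_K)-v_K) \|_{K}$. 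Substituting this in and cancelling one power of $\| \nabla (R_K^{k+1}(\hat{v}_K)-v_K) \|_{K}$ (the inequality is trivial if this quantity vanishes), I get
\[
\| \nabla (R_K^{k+1}(\hat{v}_K)-v_K) \|_{K}
\le C\,\frac{k+1}{h_K^{1/2}}\,\|\Pi^k_{\dK}(v_{\dK}-v_K)\|_{\dK}.
\]
Squaring both sides and recognising that $\frac{(k+1)^2}{h_K}\|\Pi^k_{\dK}(v_{\dK}-v_K)\|_{\dK}^2 = S_{\dK}(\hat{v}_K,\hat{v}_K)$ by the definition \eqref{def: stabilization} yields exactly \eqref{local HHO bound}.

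**Main obstacle.** This proof is essentially routine; the only point requiring care is the invariance of $\nabla w\cdot\n_K$ under $\Pi^k_{\dK}$, i.e. that the restriction of $\nabla w$ to each face $F\in\FK$ lies in $\mathbb{P}^k(F)$, which is where the mixed-order choice $v_K\in\mathbb{P}^{k+1}(K)$ enters: $w\in\mathbb{P}^{k+1}(K)$ forces $\nabla w\in\mathbb{P}^k(K;\mathbb{R}^d)$, and a degree-$k$ polynomial on $K$ restricts to a degree-$k$ polynomial on each flat face $F$. (On general polytopal faces one would instead need $\Pi^k_F$ to be a full $L^2$-projection on $F$, which is the case here.) The $hp$-dependence of the constant is tracked cleanly through the factor $(k+1)/h_K^{1/2}$ coming from Lemma~\ref{lemma: Inverse inequality}, and it matches the $(k+1)^2/h_K$ scaling built into $S_{\dK}$, so no loss in the polynomial degree occurs.
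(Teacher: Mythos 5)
Your proof is correct and follows essentially the same route as the paper: test \eqref{reconstruction} with $w=R_K^{k+1}(\hat{v}_K)-v_K$, insert $\Pi^k_{\dK}$ using that $\nabla w\cdot\n_K\in\mathbb{P}^k(\FK)$, and conclude via Cauchy--Schwarz and the discrete trace inequality \eqref{trace_inv}. The $hp$-scaling is tracked exactly as in the paper, so there is nothing to add.
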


\begin{proof}
Using the definition \eqref{reconstruction} of the reconstruction operator with $w := R_K^{k+1}(\hat{v}_K)-v_{K} \in \mathbb{P}^{k+1}(K)$ gives
\begin{equation*}
\begin{aligned}
\| \nabla (R_K^{k+1}(\hat{v}_K)-v_K) \|_{K}^2 :={}&
- (v_K -v_{\dK} ,  \nabla  (R_K^{k+1}(\hat{v}_K)-v_{K}) {\cdot} \n_K)_{\dK}.
\end{aligned}
\end{equation*}
Since $  \nabla (R_K^{k+1}(\hat{v}_K)-v_{K}) {\cdot} \n_K \in  \mathbb{P}^{k}(\FK)$, using the Cauchy--Schwarz inequality, the definition \eqref{def: stabilization} of the stabilization, and the discrete trace inequality \eqref{trace_inv} implies that
\begin{equation*}
\begin{aligned}
\| \nabla (R_K^{k+1}(\hat{v}_K)-v_K) \|_{K}^2 ={}&
- (\Pi_{\dK}^{k} (v_K|_{\dK}) -v_{\dK}, \nabla  (R_K^{k+1}(\hat{v}_K)-v_{K}){\cdot} \n_{K} )_{\dK}\\
& \leq C  S_{\dK}(\hat{v}_K,\hat{v}_K)^{\frac12} \| \nabla (R_K^{k+1}(\hat{v}_K)-v_K) \|_{K}.
\end{aligned}
\end{equation*}
This concludes the proof of \eqref{local HHO  bound}.
\end{proof}

\subsection{Global discrete problem}

We define the global HHO space as
\begin{equation}
\fes : = \mathbb{P}^{k+1}(\mesh) \times \mathbb{P}^{k}(\Fall), \quad \mathbb{P}^{k+1}(\mesh): = \bigtimes_{K\in \mesh}\mathbb{P}^{k+1}(K), \quad  \mathbb{P}^{k}(\Fall) : =  \bigtimes_{F\in \Fall} \mathbb{P}^{k}(F).
\end{equation}
A generic element in $\fes$ is denoted by $\hat{v}_h:=(v_{\mesh},v_{\Fall})$
with $v_{\mesh}:=(v_K)_{K\in\mesh}$ and $v_{\Fall}:=(v_F)_{F\in\Fall}$. For all $K\in \mesh$, the local components of $\hat{v}_h$  are collected in the pair
$\hat{v}_K:=(v_K,v_{\dK})\in\fesE$ with $v_{\dK}|_F: = v_F$  for all $F \in \FK$. Similarly, let $R^{k+1}_{\mesh}(\hat{v}_h)\in  \mathbb{P}^{k+1}(\mesh)$ be  such that  $R^{k+1}_{\mesh}(\hat{v}_h)|_K = R_K^{k+1}(\hat{v}_K)$ for all $K\in \mesh$. To deal with the  Dirichlet boundary condition, we define  the (affine) subspaces
\begin{subequations}
\begin{align}
&\fesD : = \{\hat{v}_h\in \fes\;|\; v_F=0,\;\forall F\in\FD\}, \\
&\fesS : = \{\hat{v}_h\in \fes\;|\; v_F= \Pi_{F}^{k} (g_{\ddd}|_F),\;\forall F\in\FD\}.
\end{align}
\end{subequations}
The discrete HHO problem is as follows: Find $\hat{u}_h\in \fesS$ such that
\begin{equation}\label{discrete problem}
a_h(\hat{u}_h,\hat{w}_h) = \ell_h(\hat{w}_{h}), \qquad \forall \hat{w}_h\in \fesD,
\end{equation}
where the global discrete bilinear form $a_h$ and the global linear form $\ell_h$ are assembled cellwise as
\begin{equation}\label{HHO:bilinear form}
a_h(\hat{v}_h, \hat{w}_h):= \su a_K(\hat{v}_K, \hat{w}_K), \qquad
\ell_h(\hat{w}_{h}):=  \su \big\{ (f,{w}_K)_K + (g_{\dn},w_{\dK})_{\dKN}\big\}.
\end{equation}
It is well-known that the discrete problem~\eqref{discrete problem} is amenable to static condensation, i.e.,
the cell unknowns can be eliminated locally in every mesh cell, leading to a global problem where the only remaining unknowns
are those attached to the mesh faces, i.e.,
those in $ \mathbb{P}^k(\Fall\backslash \FD)$.

An important property of the HHO method we exploit in the a posteriori error analysis is local conservation. For all $K\in\mesh$ and all $F\in \FK$,
we define the flux
\begin{equation}\label{eq:equilibrated fluxes}
\phi_{K,F}(\hat{u}_K):= -A_K\nabla R_K^{k+1}(\hat{u}_K) {\cdot} \n_K|_F + A_K\frac{  (k+1)^2} {h_K}\Pi_{F}^k ({u_K}|_F) - {u_{F}} \in \mathbb{P}^{k}(F).
\end{equation}
Then, the following holds true \cite{CoDPE:16}: (i) At every interface $F= \dK_1\cap \dK_2 \in\Fint$, we have
\begin{subequations}\label{conservation property}
\begin{equation}\label{eq:equilibrated fluxes interior faces}
\phi_{K_1,F}(\hat{u}_{K_1}) + \phi_{K_2,F}(\hat{u}_{K_2})  =0;
\end{equation}
(ii) At every Neumann boundary face $F= \dK\cap \Gamma_{\dn} \in\FN$, we have
\begin{equation}\label{eq:equilibrated fluxes Neumann faces}
\phi_{K,F}(\hat{u}_{K}) +\Pi^{k}_{F}(g_{\dn}|_F)  =0.
\end{equation}
\end{subequations}

\subsection{Stability and well-posedness}

We equip the local HHO space $\fesE$  with the $H^1$-like seminorm such that, for all $\hat{v}_K\in \fesE$,
\begin{equation}\label{H2_seminorm_elem}
|\hat{v}_K|^2_{\fesE}: = \|\nabla v_K\|_K^2
+ \frac{ (k+1)^2}{h_K} \| v_{\dK} - \Pi_{\dK}^k(v_K|_{\dK})\|_{\dK}^2.
\end{equation}

\begin{lemma}[Local stability and boundedness] \label{lem:stab_bnd}
There is a real number $\alpha>0$, depending only on the mesh shape-regularity and the space dimension $d$, such that, for all $\hat{v}_K\in \fesE$ and all $K\in\mesh$,
\begin{equation}\label{local equivalent}
\alpha|\hat{v}_K|^2_{\fesE}
\leq \|\nabla R_K^{k+1}(\hat{v}_K) \|_{ K}^2
+ S_{\dK}(\hat{v}_K,\hat{v}_K)
\leq \alpha^{-1} |\hat{v}_K|^2_{\fesE}.
\end{equation}
\end{lemma}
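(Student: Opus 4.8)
The plan is to establish the two-sided bound by proving the lower inequality and the upper inequality separately, working cellwise with a fixed $K\in\mesh$. Throughout, I will abbreviate the middle quantity as $N_K^2 := \|\nabla R_K^{k+1}(\hat v_K)\|_K^2 + S_{\dK}(\hat v_K,\hat v_K)$ and note that the second summand is already, by the definition \eqref{def: stabilization}, equal to $\tfrac{(k+1)^2}{h_K}\|\Pi^k_{\dK}(v_{\dK}-v_K)\|_{\dK}^2$, which is exactly the second term in $|\hat v_K|^2_{\fesE}$ from \eqref{H2_seminorm_elem}. So the stabilization contribution matches identically on both sides, and the whole issue reduces to comparing $\|\nabla R_K^{k+1}(\hat v_K)\|_K^2$ with $\|\nabla v_K\|_K^2$, modulo the stabilization term.

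For the upper bound, I would write $\nabla R_K^{k+1}(\hat v_K) = \nabla v_K + \nabla(R_K^{k+1}(\hat v_K)-v_K)$, apply the triangle inequality, and then invoke the key estimate \eqref{local HHO bound}, which gives $\|\nabla(R_K^{k+1}(\hat v_K)-v_K)\|_K^2 \le C\,S_{\dK}(\hat v_K,\hat v_K)$. Hence $\|\nabla R_K^{k+1}(\hat v_K)\|_K^2 \le 2\|\nabla v_K\|_K^2 + 2C\,S_{\dK}(\hat v_K,\hat v_K)$, and adding $S_{\dK}(\hat v_K,\hat v_K)$ to both sides yields $N_K^2 \le C'|\hat v_K|^2_{\fesE}$, which is the right-hand inequality in \eqref{local equivalent} (with $\alpha^{-1}=C'$). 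This direction is essentially immediate from the lemma already proved in the excerpt.

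For the lower bound, the reverse triangle inequality gives $\|\nabla v_K\|_K^2 \le 2\|\nabla R_K^{k+1}(\hat v_K)\|_K^2 + 2\|\nabla(R_K^{k+1}(\hat v_K)-v_K)\|_K^2 \le 2\|\nabla R_K^{k+1}(\hat v_K)\|_K^2 + 2C\,S_{\dK}(\hat v_K,\hat v_K)$, again using \eqref{local HHO bound}. Adding $S_{\dK}(\hat v_K,\hat v_K)$ then bounds $|\hat v_K|^2_{\fesE}$ from above by $\max\{2,2C+1\}\,N_K^2$, giving the left-hand inequality with $\alpha = (\max\{2,2C+1\})^{-1}$. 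Taking $\alpha$ to be the smaller of the constants obtained in the two directions finishes the proof, and since all constants come from \eqref{trace_inv} and mesh shape-regularity, $\alpha$ depends only on the mesh shape-regularity and $d$ as claimed; in particular the dependence on $k$ has been fully absorbed into the $hp$-scaling of $S_{\dK}$.

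The only mild subtlety — and the place to be careful — is making sure the polynomial-degree factors track correctly: the constant $C$ in \eqref{local HHO bound} is already $k$-independent because the factor $(k+1)$ from the discrete trace inequality \eqref{trace_inv} is exactly cancelled by the $(k+1)^2/h_K$ weight in the definition of $S_{\dK}$, so no hidden $k$-growth enters either inequality. There is no real obstacle here: the lemma is a routine consequence of the triangle inequality together with the already-established bound \eqref{local HHO bound}, and the proof is short.
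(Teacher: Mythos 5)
Your proof is correct and follows the same route the paper intends (the paper simply defers to \cite{CoDPE:16}, but the essential ingredient there is exactly the bound \eqref{local HHO  bound} on $\|\nabla(R_K^{k+1}(\hat v_K)-v_K)\|_K$ combined with the triangle inequality, which is what you use). Your observation that the $(k+1)$ factor from \eqref{trace_inv} cancels against the $(k+1)^2/h_K$ weight in $S_{\dK}$, so that $\alpha$ is $k$-independent, is the right point to flag, and the argument is complete.
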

\begin{proof}
The proof proceeds as in \cite{CoDPE:16} using the discrete trace inequality \eqref{trace_inv}; this is the reason why the stabilization is scaled by $(k+1)^2$ and not just by $(k+1)$.
\end{proof}

We equip the space $\fesD$ with the norm
\begin{equation}\label{H1_seminorm}
\|\hat{v}_h\|^{2}_{\textsc{hho}}:= \su A_K |\hat{v}_K|^2_{\fesE},\qquad \forall \hat{v}_h \in \fesD.
\end{equation}
The fact that $\|{\cdot}\|_{\textsc{hho}}$ defines indeed a norm on $\fesD$ is shown, e.g., in \cite[Lemma~1.6]{cicuttin2021hybrid}.

\begin{corollary}[Coercivity and well-posedness]\label{lemma: coercivit}
The discrete bilinear form $a_h$ is coercive on $\fesD$, and the discrete problem \eqref{discrete problem} is well-posed.
\end{corollary}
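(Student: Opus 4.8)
The goal is to show that $a_h$ is coercive on $\fesD$ and that the discrete problem~\eqref{discrete problem} is well-posed. The coercivity is a direct consequence of the local stability and boundedness result (Lemma~\ref{lem:stab_bnd}) combined with the cellwise assembly of $a_h$. The well-posedness then follows from the Lax--Milgram Lemma, after checking that~\eqref{discrete problem} fits the abstract setting with a coercive and bounded bilinear form on a finite-dimensional (hence complete) space, and a bounded linear form.

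\emph{Step 1: Coercivity.} For any $\hat{v}_h\in\fesD$, write $a_h(\hat{v}_h,\hat{v}_h)=\su a_K(\hat{v}_K,\hat{v}_K)$ using~\eqref{HHO:bilinear form}, and for each cell use~\eqref{eq:def_aK}, i.e.\
\[
a_K(\hat{v}_K,\hat{v}_K) = \diff_K\big(\|\nabla R_K^{k+1}(\hat{v}_K)\|_K^2 + S_{\dK}(\hat{v}_K,\hat{v}_K)\big).
\]
Applying the lower bound in~\eqref{local equivalent} cellwise and using $\diff_K\ge A_\Omega^\flat>0$ together with the definition~\eqref{H1_seminorm} of $\|{\cdot}\|_{\textsc{hho}}$ gives
\[
a_h(\hat{v}_h,\hat{v}_h) \ge \alpha\su \diff_K|\hat{v}_K|^2_{\fesE} = \alpha\,\|\hat{v}_h\|_{\textsc{hho}}^2.
\]
For this to be a genuine coercivity statement, one needs $\|{\cdot}\|_{\textsc{hho}}$ to be a norm on $\fesD$, not merely a seminorm: if $\|\hat{v}_h\|_{\textsc{hho}}=0$ then $\nabla v_K=0$ on every $K$ and $\Pi_{\dK}^k(v_{\dK}-v_K)=0$ on every $\dK$, so each $v_K$ is a constant; matching across interfaces via the face unknowns (which have degree $k\ge0$, hence see constants) forces all these constants to be equal, and the Dirichlet condition $v_F=0$ on $\FD$ (recall $|\Gamma_\ddd|>0$) pins that common constant to zero, whence $\hat{v}_h=0$. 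I would spell out this definiteness argument, as it is the only non-mechanical point.

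\emph{Step 2: Boundedness and well-posedness.} Boundedness of $a_h$ on $\fesD\times\fesD$ follows from the Cauchy--Schwarz inequality applied cellwise together with the upper bound in~\eqref{local equivalent} and $\diff_K\le A_\Omega^\sharp$, giving $|a_h(\hat{v}_h,\hat{w}_h)|\le \alpha^{-1}(A_\Omega^\sharp/A_\Omega^\flat)^{1/2}\|\hat{v}_h\|_{\textsc{hho}}\|\hat{w}_h\|_{\textsc{hho}}$ (the $\diff_K$ weights can be absorbed cleanly since $\|{\cdot}\|_{\textsc{hho}}$ already carries the $A_K$ weight). The linear form $\ell_h$ is bounded on $\fesD$ by Cauchy--Schwarz on each cell, the trace inequality to control $\|w_{\dK}\|_{\dKN}$ by the HHO norm (or more simply by controlling face terms directly), and $f\in L^2(\Omega)$, $g_\dn\in L^2(\Gamma_\dn)$. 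Since $\fesS$ is a nonempty affine subspace of the finite-dimensional space $\fes$ with underlying vector space $\fesD$, writing $\hat{u}_h = \hat{u}_h^g + \hat{u}_h^0$ with $\hat{u}_h^g\in\fesS$ fixed (e.g.\ the obvious lifting of $\Pi_F^k(g_\ddd|_F)$) reduces~\eqref{discrete problem} to finding $\hat{u}_h^0\in\fesD$ with $a_h(\hat{u}_h^0,\hat{w}_h)=\ell_h(\hat{w}_h)-a_h(\hat{u}_h^g,\hat{w}_h)$ for all $\hat{w}_h\in\fesD$; the right-hand side is a bounded linear form on $\fesD$, and Lax--Milgram (e.g.\ \cite[Proposition 31.21]{Ern_Guermond_FEs_II_2021}, as already cited for~\eqref{eq:prob}) yields existence and uniqueness.

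\emph{Main obstacle.} There is no real obstacle here; the statement is essentially a corollary of Lemma~\ref{lem:stab_bnd}. The only point requiring a little care — and the one I would not skip — is verifying that $\|{\cdot}\|_{\textsc{hho}}$ is definite on $\fesD$, i.e.\ that the zero-jump condition plus the homogeneous Dirichlet face values kill all cellwise constants; this uses $|\Gamma_\ddd|>0$ and the connectedness of $\Omega$ through the mesh.
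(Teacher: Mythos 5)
Your proof is correct and follows essentially the same route as the paper: sum the local stability bound of Lemma~\ref{lem:stab_bnd} weighted by $A_K$ to get coercivity and continuity in the $\|{\cdot}\|_{\textsc{hho}}$-norm, then conclude by Lax--Milgram after lifting the Dirichlet datum. Your explicit verification that $\|{\cdot}\|_{\textsc{hho}}$ is definite on $\fesD$ (cellwise constants matched through the face unknowns and killed by the Dirichlet faces) is a point the paper takes for granted when declaring \eqref{H1_seminorm} a norm, and it is a worthwhile addition.
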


\begin{proof}
Summing \eqref{local equivalent} multiplied by $A_K$ over all the mesh cells shows the following coercivity and continuity properties:
\begin{equation}\label{coercivity}
\alpha\|\hat{v}_h\|_{\textsc{hho}}^2 \leq a_h(\hat{v}_h,\hat{v}_h)  \leq \alpha^{-1} \|\hat{v}_h\|_{\textsc{hho}}^2, \qquad \forall \hat{v}_h\in\fesD.
\end{equation}
The well-posedness of \eqref{discrete problem} with homogeneous Dirichlet boundary conditions then follows from the Lax--Milgram lemma. For inhomogeneous Dirichlet boundary conditions, the proof follows by introducing a lifting, say $\hat{g}_{h} \in \fesS$, such that $g_F= \Pi_{F}^{k} (g_{\ddd}|_F)$, for all $F\in\FD$.
\end{proof}

\section{$hp$-a priori error estimate} \label{sec:HHO a priori}

In this section, we establish our $hp$-a priori error estimate.

\subsection{Approximation}

For all $K\in \mesh$, we define the local reduction operator
$\mathcal{\hat{I}}^k_K : H^1(K) \rightarrow \fesE$ such that, for all $v\in H^1(K)$,
\begin{equation}\label{interpoltation}
\mathcal{\hat{I}}^k_K(v):= \big(\Pi_{K}^{k+1}(v),\Pi_{\dK}^k (v|_{\dK})\big) \in \fesE.
\end{equation}
Moreover, the elliptic projection $\mathcal{E}^{k+1}_K:H^1(K)\rightarrow \mathbb{P}^{k+1}(K)$ is defined such that
\begin{equation}\label{ell_proj}
\begin{alignedat}{2}
(\nabla (\mathcal{E}^{k+1}_K(v) - v), \nabla w)_K &= 0,&\qquad&\forall w \in  \mathbb{P}^{k+1}(K) /\mathbb{R}, \\
(\mathcal{E}^{k+1}_K(v)- v, 1)_K &=0.&
\end{alignedat}
\end{equation}
One readily verifies by proceeding as in \cite[Lemma~3]{DiPEL:14} that
\begin{equation}
R_K^{k+1}\circ \mathcal{\hat{I}}^k_K = \mathcal{E}^{k+1}_K.  \label{elliptic projection}
\end{equation}

\begin{lemma}[Bound on stabilization] \label{lem:local_red}
The following holds for all $K\in\mesh$ and all $v\in H^1(K)$:
\begin{equation}
S_{\dK}(\mathcal{\hat{I}}^k_K(v),\mathcal{\hat{I}}^k_K(v))\leq
C(k+1) \|\nabla (v - \mathcal{I}^{k+1}_{{\rm BS},K}(v) \|_{K}^2. \label{stabilization bound}
\end{equation}
\end{lemma}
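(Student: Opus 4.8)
The plan is to reduce the stabilization term to the trace approximation estimate of Lemma~\ref{lemma: L^2 projection}. First I would unfold the definitions. By \eqref{interpoltation}, the reduction operator has components $v_K=\Pi_K^{k+1}(v)$ and $v_{\dK}=\Pi_{\dK}^k(v|_{\dK})$, so using the idempotency of $\Pi_{\dK}^k$ one gets $\Pi^k_{\dK}(v_{\dK}-v_K)=\Pi^k_{\dK}(v|_{\dK})-\Pi^k_{\dK}(\Pi_K^{k+1}(v)|_{\dK})=\Pi^k_{\dK}\big((v-\Pi_K^{k+1}(v))|_{\dK}\big)$. Invoking the $L^2(\dK)$-stability of $\Pi^k_{\dK}$ and the definition \eqref{def: stabilization} of $S_{\dK}$ then yields
\[
S_{\dK}(\mathcal{\hat{I}}^k_K(v),\mathcal{\hat{I}}^k_K(v))\le \frac{(k+1)^2}{h_K}\,\|v-\Pi_K^{k+1}(v)\|_{\dK}^2 .
\]

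The second step is to make the Babu\v{s}ka--Suri approximant appear. Setting $w:=v-\mathcal{I}^{k+1}_{{\rm BS},K}(v)$ and using that $\mathcal{I}^{k+1}_{{\rm BS},K}(v)\in\mathbb{P}^{k+1}(K)$ is left invariant by $\Pi_K^{k+1}$, we have the exact identity $v-\Pi_K^{k+1}(v)=w-\Pi_K^{k+1}(w)$. Applying Lemma~\ref{lemma: L^2 projection} with polynomial degree $k+1$ in place of $k$ (which is legitimate since it holds for all nonnegative degrees) gives $\|w-\Pi_K^{k+1}(w)\|_{\dK}\le C\big(h_K/(k+2)\big)^{1/2}|w|_{H^1(K)}\le C\big(h_K/(k+1)\big)^{1/2}\|\nabla w\|_K$. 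Substituting into the previous display and cancelling $h_K$ leaves the factor $(k+1)$, which is exactly the claimed bound \eqref{stabilization bound}.

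There is essentially no genuine obstacle here: the proof is a short computation, and the only points that need a little care are the commutation/idempotency of the two $L^2$-projections (so that the face stabilization collapses to the cell projection error $v-\Pi_K^{k+1}(v)$) and the observation that $\Pi_K^{k+1}$ does not see $\mathcal{I}^{k+1}_{{\rm BS},K}(v)$, which is what allows the trace estimate to be applied to $w$ rather than to $v$ (recall $v$ is only assumed to be in $H^1(K)$, so $\|\nabla(v-\mathcal{I}^{k+1}_{{\rm BS},K}(v))\|_K$ is the natural quantity on the right-hand side). The $(k+1)$ scaling, rather than a constant, is the price of the $(k+1)^2/h_K$ weight in the stabilization, and it is harmless because \eqref{stabilization bound} will be combined with the Babu\v{s}ka--Suri estimate of Lemma~\ref{lemma: hp-Polynomial approximation}, which contributes the decaying factors in the final $hp$-a priori error estimate.
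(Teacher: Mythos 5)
Your proof is correct and follows essentially the same route as the paper's: reduce the stabilization to $\frac{(k+1)^2}{h_K}\|v-\Pi_K^{k+1}(v)\|_{\dK}^2$ via the idempotency and $L^2(\dK)$-stability of $\Pi_{\dK}^k$, then use the invariance of $\mathcal{I}^{k+1}_{{\rm BS},K}(v)$ under $\Pi_K^{k+1}$ to rewrite the cell projection error in terms of $w=v-\mathcal{I}^{k+1}_{{\rm BS},K}(v)$ and apply the trace approximation bound of Lemma~\ref{lemma: L^2 projection}. The only cosmetic difference is that you make the degree shift to $k+1$ in the projection lemma explicit; the paper's argument is identical in substance.
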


\begin{proof}
Recalling the definition \eqref{def: stabilization} of $S_{\dK}$, the definition  \eqref{interpoltation} of $\mathcal{\hat{I}}^k_K$ and since $\Pi^k_{\dK} \circ \Pi^{k}_{\dK} = \Pi^k_{\dK}$, we have
\begin{equation*}
S_{\dK}(\mathcal{\hat{I}}^k_K(v),\mathcal{\hat{I}}^k_K(v))
=
\frac{(k+1)^2}{h_K}
\| \Pi^{k}_{\dK}((v- \Pi^{k+1}_K (v))|_{\dK}) \|_{\dK}^2
\le
\frac{(k+1)^2}{h_K}
\|v- \Pi^{k+1}_K (v) \|_{\dK}^2,
\end{equation*}
where we used the  $L^2(\dK)$-stability of $\Pi_{\dK}^k$.
Then, we invoke the approximation result on the $L^2$-orthogonal projection, see \eqref{L2 projection Polynomial approximation},  and that  $ \Pi^{k+1}_{K} \circ \mathcal{I}^{k+1}_{{\rm BS},K}= \mathcal{I}^{k+1}_{{\rm BS},K}$, giving
\begin{align*}
\frac{(k+1)^2}{h_K}
\| v- \Pi^{k+1}_K (v) \|_{\dK}^2
&=
\frac{(k+1)^2}{h_K}
\| v-  \mathcal{I}^{k+1}_{{\rm BS},K}(v) - \Pi^{k+1}_K ( v - \mathcal{I}^{k+1}_{{\rm BS},K}(v)) \|_{\dK}^2 \\
& \le
C(k+1)  \| \nabla (v - \mathcal{I}^{k+1}_{{\rm BS},K}(v) \|_{K}^2.
\end{align*}
Combining the above two bounds  proves \eqref{stabilization bound}.
\end{proof}

For all $K\in\mesh$ and all $v\in H^{1+s}(K)$, $s>\frac12$, we consider the following norm:
\begin{equation}\label{def: sharp norm}
\begin{aligned}
\|v\|^2_{\sharp, K}:=  \|\nabla v\|_K^2
+
\frac{h_K}{k+1}  \|\nabla v \|_{\dK}^2.
\end{aligned}
\end{equation}

\begin{lemma}[Approximation]\label{lem: approxmation}
The following holds for all $K\in \mesh$ and all $v\in H^{1+s}(K)$, $s>\frac{1}{2}$:
\begin{equation}\label{error bound elliptic projection}
\|v - \mathcal{E}^{k+1}_K (v)\|_{\sharp, K} \leq C (k+1)^{\frac12} \|v -  \mathcal{I}^{k+1}_{{\rm BS},K}(v)\|_{\sharp, K}.
\end{equation}
\end{lemma}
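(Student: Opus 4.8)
The plan is to split the error as $v - \mathcal{E}^{k+1}_K(v) = \big(v - \mathcal{I}^{k+1}_{{\rm BS},K}(v)\big) + \delta_K$ with the polynomial remainder $\delta_K := \mathcal{I}^{k+1}_{{\rm BS},K}(v) - \mathcal{E}^{k+1}_K(v) \in \mathbb{P}^{k+1}(K)$, and to estimate the two contributions to $\|v - \mathcal{E}^{k+1}_K(v)\|_{\sharp,K}$ separately, using \eqref{def: sharp norm}. The first ingredient is the standard best-approximation property of the elliptic projection in the $H^1(K)$-seminorm: since gradients annihilate constants, the orthogonality in \eqref{ell_proj} holds for all $w\in\mathbb{P}^{k+1}(K)$, so a Cauchy--Schwarz argument gives $\|\nabla(v - \mathcal{E}^{k+1}_K(v))\|_K \le \|\nabla(v - w)\|_K$ for every $w\in\mathbb{P}^{k+1}(K)$. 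Taking $w=\mathcal{I}^{k+1}_{{\rm BS},K}(v)$ yields $\|\nabla(v - \mathcal{E}^{k+1}_K(v))\|_K \le \|\nabla(v - \mathcal{I}^{k+1}_{{\rm BS},K}(v))\|_K \le \|v - \mathcal{I}^{k+1}_{{\rm BS},K}(v)\|_{\sharp,K}$, so the cell-gradient part of the left-hand side of \eqref{error bound elliptic projection} is controlled with no loss in $k$.

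For the boundary part $\frac{h_K}{k+1}\|\nabla(v - \mathcal{E}^{k+1}_K(v))\|_{\partial K}^2$, I would use the triangle inequality $\|\nabla(v - \mathcal{E}^{k+1}_K(v))\|_{\partial K} \le \|\nabla(v - \mathcal{I}^{k+1}_{{\rm BS},K}(v))\|_{\partial K} + \|\nabla\delta_K\|_{\partial K}$. The first term contributes $\frac{h_K}{k+1}\|\nabla(v - \mathcal{I}^{k+1}_{{\rm BS},K}(v))\|_{\partial K}^2 \le \|v - \mathcal{I}^{k+1}_{{\rm BS},K}(v)\|_{\sharp,K}^2$ directly from \eqref{def: sharp norm}. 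For the second term, $\nabla\delta_K$ is a vector-valued polynomial of degree at most $k$, so the discrete trace inequality \eqref{trace_inv} applied componentwise gives $\|\nabla\delta_K\|_{\partial K} \le C\frac{k+1}{h_K^{1/2}}\|\nabla\delta_K\|_K$, whence $\frac{h_K}{k+1}\|\nabla\delta_K\|_{\partial K}^2 \le C(k+1)\|\nabla\delta_K\|_K^2$. Finally, the triangle inequality together with the best-approximation property above gives $\|\nabla\delta_K\|_K \le \|\nabla(\mathcal{I}^{k+1}_{{\rm BS},K}(v) - v)\|_K + \|\nabla(v - \mathcal{E}^{k+1}_K(v))\|_K \le 2\|\nabla(v - \mathcal{I}^{k+1}_{{\rm BS},K}(v))\|_K \le 2\|v - \mathcal{I}^{k+1}_{{\rm BS},K}(v)\|_{\sharp,K}$. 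Collecting the bounds yields $\|v - \mathcal{E}^{k+1}_K(v)\|_{\sharp,K}^2 \le C(k+1)\|v - \mathcal{I}^{k+1}_{{\rm BS},K}(v)\|_{\sharp,K}^2$, and taking square roots gives \eqref{error bound elliptic projection}.

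The only step where a power of the polynomial degree enters is the use of \eqref{trace_inv} to pass from $\|\nabla\delta_K\|_{\partial K}$ to $\|\nabla\delta_K\|_K$: this inequality loses a factor $(k+1)$ relative to the $h_K^{-1/2}$ scaling that the weight $h_K/(k+1)$ in the $\sharp$-norm would exactly absorb, leaving a net $(k+1)$ inside the squared estimate, i.e. the $(k+1)^{1/2}$ in \eqref{error bound elliptic projection}. This is the essential source of $p$-suboptimality here; all other steps (best approximation of the elliptic projection, triangle inequalities, and the definition of the $\sharp$-norm) are $p$-optimal, and no obstacle beyond this bookkeeping is expected.
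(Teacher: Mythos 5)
Your proof is correct and follows essentially the same route as the paper: split off the polynomial remainder $\mathcal{I}^{k+1}_{{\rm BS},K}(v)-\mathcal{E}^{k+1}_K(v)$, absorb its boundary contribution via the discrete trace inequality \eqref{trace_inv} at the cost of a factor $(k+1)^{\frac12}$, and control its $H^1(K)$-seminorm by the best-approximation (equivalently, the $H^1$-stability and idempotence) of the elliptic projection. The only cosmetic difference is that you treat the cell-gradient and boundary parts of the $\sharp$-norm separately, whereas the paper bounds the whole $\sharp$-norm of the polynomial remainder at once; the resulting estimate is identical.
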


\begin{proof}
Using the triangle inequality, we have
\begin{equation*}
\|v - \mathcal{E}^{k+1}_K (v)\|_{\sharp, K} \leq \|v - \mathcal{I}^{k+1}_{{\rm BS},K}(v)\|_{\sharp, K} + \| \mathcal{I}^{k+1}_{{\rm BS},K}(v)-  \mathcal{E}^{k+1}_{K}(v)\|_{\sharp, K},
\end{equation*}
so that we only need to bound the second term on the right-hand side. Owing to the discrete trace inequality \eqref{trace_inv}, we infer that
\begin{equation*}
\|\mathcal{E}^{k+1}_K(v) - \mathcal{I}^{k+1}_{{\rm BS},K} (v)\|_{\sharp, K} \le C(k+1)^{\frac12}\|\nabla(\mathcal{E}^{k+1}_K(v) - \mathcal{I}^{k+1}_{{\rm BS},K}(v))\|_K,
\end{equation*}
and it remains to bound $\|\nabla(\mathcal{E}^{k+1}_K(v) -  \mathcal{I}^{k+1}_{{\rm BS},K}(v))\|_K$.
Since $\mathcal{E}^{k+1}_K \circ  \mathcal{I}^{k+1}_{{\rm BS},K}=   \mathcal{I}^{k+1}_{{\rm BS},K}$, we infer that
\begin{align*}
\|\nabla(\mathcal{E}^{k+1}_K(v) - \mathcal{I}^{k+1}_{{\rm BS},K}(v))\|_K
= \|\nabla(\mathcal{E}^{k+1}_K(v - \mathcal{I}^{k+1}_{{\rm BS},K}(v)))\|_K
\le \|\nabla(v - \mathcal{I}^{k+1}_{{\rm BS},K}(v))\|_K,
\end{align*}
where the last bound follows from the stability of the elliptic projection. Combining the above bounds completes the proof.
\end{proof}

\subsection{Consistency}

Let $u\in H^1_{g,\ddd}(\Omega)$ be the exact solution to~\eqref{eq:prob}.
We define the consistency error $\delta_h \in (\fesD)^\prime$ such that
\begin{equation} \label{eq:def_delta}
\langle \delta_h,\hat{w}_h \rangle:= \ell_h(\hat{w}_h) - a_h(\Ihk(u), \hat{w}_h),
\qquad \forall \hat{w}_h\in \fesD,
\end{equation}
where $\langle{\cdot},{\cdot}\rangle$ denotes the duality pairing between $(\fesD)'$ and $\fesD$, and where
the global reduction operator $\Ihk:H^1(\Omega)\rightarrow \fes$ is defined
such that, for all $v\in H^1(\Omega)$,
\begin{equation} \label{def:Ihk}
\Ihk(v):=\big( (\Pi_K^{k+1}(v|_K))_{K\in\mesh},(\Pi_F^{k}(v|_F))_{F\in\Fall} \big)\in \fes,
\end{equation}
observing that $v$  is single-valued on every $F\in\Fint$. Notice that the local components of $\Ihk(v)$ attached to $K$ and its faces are
$\mathcal{\hat{I}}^k_K(v|_K)$ for all $K\in\mesh$.
The above definition of the consistency error is classical in the context of HHO methods. It is also rather natural since, as for most nonconforming discretization methods, it avoids defining any extension of the discrete bilinear form. Examples for other nonconforming methods can be found, e.g., in \cite[Part~VIII]{Ern_Guermond_FEs_II_2021}.

\begin{lemma}[Consistency]\label{lemma:consistency}
Assume that $u\in H^{1+s}(\Omega)$ with $s>\frac12$. The following holds true:
\begin{equation} \label{consistency}
  \|\delta_h\|_{\textsc{hho}'}:= \sup_{\hat{w}_h\in\fesD}
\frac{|\langle \delta_h,\hat{w}_h \rangle|}{\|\hat{w}_h\|_{\textsc{hho}}}
\leq C \left( \su A_K(k+1)\|u- \mathcal{I}^{k+1}_{{\rm BS},K} (u)\|^2_{\sharp,K}\right)^{\frac12}.
\end{equation}
\end{lemma}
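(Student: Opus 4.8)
The plan is to estimate the consistency error by exploiting the Galerkin orthogonality implied by the exact weak formulation. First I would use the definition of $a_h$ and $\ell_h$ together with the integration-by-parts identity \eqref{eq:rec_ipp} for the reconstruction operator applied to $\Ihk(u)$. Recalling $R_K^{k+1}\circ\mathcal{\hat I}^k_K=\mathcal E^{k+1}_K$ from \eqref{elliptic projection}, and writing $\hat w_h=(w_{\mesh},w_{\Fall})$ with local components $\hat w_K=(w_K,w_{\dK})$, I would rewrite the reconstruction term cellwise as $A_K(\nabla\mathcal E^{k+1}_K(u),\nabla R_K^{k+1}(\hat w_K))_K$ and integrate by parts, also using the definition \eqref{reconstruction} to reintroduce $w_K$ in place of $R_K^{k+1}(\hat w_K)$ in the volumetric term. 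The key point is that $\nabla\mathcal E^{k+1}_K(u)$ need not equal $\nabla u$; the natural idea is to insert $\nabla u$ by adding and subtracting, and to exploit the defining property \eqref{ell_proj} of the elliptic projection (which kills $\nabla(\mathcal E^{k+1}_K(u)-u)$ tested against any gradient of a $\mathbb P^{k+1}(K)$ function, in particular $\nabla R_K^{k+1}(\hat w_K)$ up to the mean-value caveat, and more importantly against $\nabla w_K$).

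Second, I would assemble the cellwise contributions. Using the strong form $-\nabla\cdot(A\nabla u)=f$ in each cell and the boundary conditions $(A\nabla u)\cdot\n_\Omega=g_{\dn}$ on $\Gamma_{\dn}$, together with the single-valuedness of $u$ and of its normal flux $A\nabla u\cdot\n$ across interfaces, the volumetric residual terms $(f,w_K)_K$ cancel against $(\,A\nabla u\cdot\n_K, w_K\,)_{\dK}-(A\nabla u, \nabla w_K)_K$-type contributions, and the interface flux contributions telescope to zero because $\jump{w_F}$-type jumps... more precisely because $w_{\dK_1}|_F=w_{\dK_2}|_F=w_F$ is single-valued (the face unknown is shared), while the exact normal flux is continuous. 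What survives is a sum over cells of terms of the form $\big(A_K\nabla(u-\mathcal E^{k+1}_K(u))\cdot\n_K,\, w_{\dK}-w_K\big)_{\dK}$ (the volumetric part vanishing by \eqref{ell_proj}) plus the stabilization term $-A_K S_{\dK}(\mathcal{\hat I}^k_K(u),\hat w_K)$. Here I would insert $\Pi^k_{\dK}$ on the boundary factor, using that $\nabla\mathcal E^{k+1}_K(u)\cdot\n_K\in\mathbb P^k(\FK)$ so only $\Pi^k_{\dK}(w_{\dK}-w_K)$ matters, and similarly that $A_K$ is piecewise constant.

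Third, I would bound the resulting two families of terms. For the first family, Cauchy--Schwarz on $\dK$ gives $\big|(A_K\nabla(u-\mathcal E^{k+1}_K(u))\cdot\n_K,\Pi^k_{\dK}(w_{\dK}-w_K))_{\dK}\big|\le A_K\|\nabla(u-\mathcal E^{k+1}_K(u))\|_{\dK}\,\|\Pi^k_{\dK}(w_{\dK}-w_K)\|_{\dK}$; scaling the first factor by $(h_K/(k+1))^{1/2}$ and the second by $((k+1)/h_K)^{1/2}$, the second becomes exactly the stabilization seminorm $\le A_K^{1/2}|\hat w_K|_{\fesE}$ times $A_K^{1/2}$, while the first is $\le A_K^{1/2}\|u-\mathcal E^{k+1}_K(u)\|_{\sharp,K}$ up to the constant, which by Lemma~\ref{lem: approxmation} is $\le C(k+1)^{1/2}A_K^{1/2}\|u-\mathcal I^{k+1}_{{\rm BS},K}(u)\|_{\sharp,K}$. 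For the stabilization family, Cauchy--Schwarz for $S_{\dK}$ gives $|A_K S_{\dK}(\mathcal{\hat I}^k_K(u),\hat w_K)|\le A_K S_{\dK}(\mathcal{\hat I}^k_K(u),\mathcal{\hat I}^k_K(u))^{1/2}|\hat w_K|_{\fesE}$, and Lemma~\ref{lem:local_red} bounds $S_{\dK}(\mathcal{\hat I}^k_K(u),\mathcal{\hat I}^k_K(u))^{1/2}\le C(k+1)^{1/2}\|\nabla(u-\mathcal I^{k+1}_{{\rm BS},K}(u))\|_K\le C(k+1)^{1/2}\|u-\mathcal I^{k+1}_{{\rm BS},K}(u)\|_{\sharp,K}$. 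Summing over $K$, applying the discrete Cauchy--Schwarz inequality in the cell index against $\|\hat w_h\|_{\textsc{hho}}=(\su A_K|\hat w_K|^2_{\fesE})^{1/2}$, and dividing by $\|\hat w_h\|_{\textsc{hho}}$ yields \eqref{consistency}.

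The main obstacle I anticipate is the careful bookkeeping in the second step: correctly pairing the reconstruction's integration-by-parts identity \eqref{eq:rec_ipp} with the strong form and the boundary/interface terms so that everything reduces to the elliptic-projection error on cell boundaries and the stabilization, while scrupulously tracking the $L^2$-projections $\Pi^k_{\dK}$, $\Pi^k_F$ and the piecewise-constant coefficient $A_K$ — in particular verifying that the $g_{\dn}$ term in $\ell_h$ is exactly matched by the Neumann boundary contribution of the exact flux (using $(g_{\dn},w_{\dK})_{\dKN}$ against $\Pi^k_F$ and the fact that $w_F$ lives in $\mathbb P^k(F)$), and that the Dirichlet faces contribute nothing because $w_F=0$ there for $\hat w_h\in\fesD$. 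Once that reduction is clean, the estimates in the third step are routine applications of the $hp$-tools already established.
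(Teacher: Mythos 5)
Your overall architecture is the paper's: reduce $\langle\delta_h,\hat w_h\rangle$ via the identity $R_K^{k+1}\circ\mathcal{\hat I}^k_K=\mathcal E^{k+1}_K$, cellwise integration by parts, and the single-valuedness of $\nabla u\cdot\n_F$ and of $w_F$, down to the two families $-A_K(\nabla(u-\mathcal E^{k+1}_K(u))\cdot\n_K,\,w_K-w_{\dK})_{\dK}$ and $-A_KS_{\dK}(\mathcal{\hat I}^k_K(u),\hat w_K)$, and then conclude with Lemma~\ref{lem:local_red}, Lemma~\ref{lem: approxmation}, and a discrete Cauchy--Schwarz against $\|\hat w_h\|_{\textsc{hho}}$. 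The stabilization family and the final summation are handled exactly as in the paper. There is, however, one genuine flaw in your reduction of the first family: you insert $\Pi^k_{\dK}$ on the factor $w_{\dK}-w_K$ on the grounds that $\nabla\mathcal E^{k+1}_K(u)\cdot\n_K\in\mathbb P^k(\FK)$. That justification covers only half of the boundary weight. Writing $\mathbb I$ for the identity, one indeed has $(\nabla\mathcal E^{k+1}_K(u)\cdot\n_K,(\mathbb I-\Pi^k_{\dK})(w_{\dK}-w_K))_{\dK}=0$ by orthogonality, but the other piece $(\nabla u\cdot\n_K,(\mathbb I-\Pi^k_{\dK})(w_K))_{\dK}$ does not vanish, since $\nabla u\cdot\n_K$ is not a polynomial. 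As written, your estimate silently discards this nonzero remainder.

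The gap is reparable with tools you already cite, and closing it is precisely what the paper does: instead of projecting, split $w_K-w_{\dK}=(w_K-\Pi^k_{\dK}(w_K))+(\Pi^k_{\dK}(w_K)-w_{\dK})$, bound the second piece by the stabilization seminorm as you do, and bound the first piece by $\|w_K-\Pi^k_{\dK}(w_K)\|_{\dK}\le\|w_K-\Pi^k_K(w_K)\|_{\dK}\le C(h_K/(k+1))^{1/2}\|\nabla w_K\|_K$ using Lemma~\ref{lemma: L^2 projection}; paired with $(h_K/(k+1))^{1/2}\|\nabla(u-\mathcal E^{k+1}_K(u))\|_{\dK}\le\|u-\mathcal E^{k+1}_K(u)\|_{\sharp,K}$ this yields the stated right-hand side after invoking Lemma~\ref{lem: approxmation}. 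Equivalently, if you insist on projecting, you must keep and estimate the remainder $((\mathbb I-\Pi^k_{\dK})(\nabla u\cdot\n_K),w_K)_{\dK}$, e.g.\ by observing that it equals $((\mathbb I-\Pi^k_{\dK})(\nabla(u-\mathcal I^{k+1}_{{\rm BS},K}(u))\cdot\n_K),\,w_K-\Pi^k_{\dK}(w_K))_{\dK}$ because $\nabla\mathcal I^{k+1}_{{\rm BS},K}(u)\cdot\n_K\in\mathbb P^k(\FK)$, and then applying the same two bounds. With this correction your argument coincides with the paper's proof.
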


\begin{proof}
Let $\hat{w}_h\in \fesD$. Using the definition of $\ell_h$ in \eqref{HHO:bilinear form},  the PDE and the boundary conditions satisfied by the exact solution $u$, and integrating by parts cellwise, we infer that
\begin{equation*}
\ell_h(\hat{w}_h) = \su A_K\Big\{ (\nabla u, \nabla {w}_K)_K
- ( \nabla u{\cdot} \n_K , w_K)_{\dK}
+ ( \nabla u{\cdot} \n_K  ,w_{\dK})_{\dKN} \Big\}.
\end{equation*}
The assumption $u \in H^{1+s} (\Omega)$ with $s>\frac12$ implies that $(\nabla u {\cdot}\n_K)|_{\dK}$ is meaningful  in $L^2(\dK)$ and single-valued at every mesh interface. Moreover, since $w_{\dK}$ is single-valued on $\dKi$ and vanishes on $\dKD$, we infer that
\begin{align*}
\ell_h(\hat{w}_h) = \su A_K\Big\{ (\nabla u, \nabla {w}_K)_K
- ( \nabla u{\cdot} \n_K , w_K - w_{\dK})_{\dK} \Big\}.
\end{align*}
Since $a_h$ is assembled cellwise and the local
components of $\Ihk(u)$ are $\mathcal{\hat{I}}^k_K(u|_K)$ for all $K\in\mesh$,
we infer that
$a_h(\Ihk(u), \hat{w}_h) = \su a_K(\mathcal{\hat{I}}^k_K(u|_K),\hat{w}_K)$.
Using the definition \eqref{eq:def_aK} of $a_K$, the definition  \eqref{reconstruction} of
$R_K^{k+1}(\hat{w}_K)$ and the identity \eqref{elliptic projection} leads to
\begin{align*}
a_h(\Ihk(u), \hat{w}_h) =&{}
\su A_K\Big\{ ( \nabla \mathcal{E}^{k+1}_K(u|_K), \nabla {w}_K)_K
- ( \nabla \mathcal{E}^{k+1}_K(u|_K) {\cdot} \n_K, w_K - w_{\dK})_{\dK} \\
&+ S_{\dK}(\mathcal{\hat{I}}^k_K(u|_K),\hat{w}_K) \Big\}.
\end{align*}
Defining the function $\eta$ cellwise as $\eta|_K:=u|_K-\mathcal{E}^{k+1}_K(u|_{K})$ for all $K\in\mesh$,
we infer that
\begin{equation}\label{the error relation}
\begin{aligned}
\langle \delta_h,\hat{w}_h \rangle
= {} \su A_K\Big\{ 
- ( \nabla\eta{\cdot} \n_K, w_K - w_{\dK})_{\dK} - S_{\dK}(\mathcal{\hat{I}}^k_K(u|_K),\hat{w}_K) \Big\},
\end{aligned}
\end{equation}
where we used that $(\nabla \eta,\nabla w_K)_K=0$.
Let us denote by $\mathcal{T}_{1,K}$ the first term  and by
$\mathcal{T}_{2,K}$ the second term inside braces on the right-hand side of~\eqref{the error relation}. We bound $\mathcal{T}_{1,K}$ by the Cauchy--Schwarz inequality, the triangle inequality, the trivial bound $\|w_{K} -\Pi_{\dK}^{k}(w_K|_{\dK})\|_{\dK} \le \|w_{K} -\Pi_{K}^{k}(w_K)\|_{\dK}$,
and the approximation result \eqref{L2 projection Polynomial approximation} on the $L^2$-projection. Recalling the definition~\eqref{def: sharp norm} of the
$\|{{\cdot}}\|_{\sharp,K}$-norm, this yields
\begin{equation*}
\begin{aligned}
|\mathcal{T}_{1,K}|
&
\le \|\nabla \eta\|_{\dK} \big( \|\Pi_{\dK}^{k}(w_K|_{\dK})-w_{\dK}\|_{\dK} + \|w_{K} -\Pi_{\dK}^{k}(w_K|_{\dK})\|_{\dK}\big) \\
&\le \Big( \frac{h_K}{k+1}\Big)^{\frac12} \|\nabla \eta\|_{\dK}
\Big( \frac{ k+1}{h_K}\Big)^{\frac12}  \big(   \|\Pi_{\dK}^{k}(w_K|_{\dK})-w_{\dK}\|_{\dK} + \|w_{K} -\Pi_{K}^{k}(w_K)\|_{\dK}\big) \\
& \le \| \eta \|_{\sharp,K} \left( \Big( \frac{ k+1}{h_K}\Big)^{\frac12}   \|\Pi_{\dK}^{k}(w_K|_{\dK})-w_{\dK}\|_{\dK} +  C \|\nabla w_{K} \|_{K} \right)
\le C     \| \eta \|_{\sharp,K}|\hat{w}_K|_{\fesE}.
\end{aligned}
\end{equation*}
Moreover, owing to~\eqref{stabilization bound} and the upper bound in~\eqref{local equivalent}, we have
\begin{equation*}
|\mathcal{T}_{2,K}| \le S_{\dK}(\mathcal{\hat{I}}^k_K(u|_K),\mathcal{\hat{I}}^k_K(u|_K))^{\frac12}
S_{\dK}(\hat{w}_K,\hat{w}_K)^{\frac12} \le C     (k+1)^{\frac12}\| \nabla(u - \mathcal{I}^{k+1}_{{\rm BS},K}(u))\|_{K}|\hat{w}_K|_{\fesE}.
\end{equation*}
Altogether, this implies that
\begin{equation*}
|\langle \delta_h,\hat{w}_h \rangle|
\le C  \left( \su A_K\Big\{\| \eta \|^2_{\sharp,K} + (k+1)\| \nabla(u - \mathcal{I}^{k+1}_{{\rm BS},K}(u))\|_{K}^2 \Big\}
\right)^{\frac12} \|\hat{w}_h\|_{\textsc{hho}}.
\end{equation*}
Invoking Lemma~\ref{lem: approxmation} to bound $\| \eta \|_{\sharp,K}$ completes the proof.
\end{proof}

\subsection{Error estimate}\label{sec: a priori error estimate}

We are now ready to establish our main result concerning the $hp$-a priori error analysis. The estimate is $\frac12$-order $p$-suboptimal.

\begin{theorem}[$hp$-a priori error estimate]\label{Theorem: a priori}
Let $u$ be the weak solution to~\eqref{eq:prob}, and let $\hat{u}_h$
be the discrete solution to~\eqref{discrete problem}.
Assume that $u\in H^{1+s}(\Omega)$ with $s>\frac{1}{2}$. The following holds:
\begin{subequations}
\begin{equation} \label{eq: R err1}
\su  A_K\Big\{ \|\nabla (u-R_K^{k+1}(\hat{u}_K))\|_K^2 + \|\nabla (u-u_K)\|_K^2 +S_{\dK}(\hat{u}_K,\hat{u}_K) \Big\} \leq C 	\su A_K(k+1) \|u- \mathcal{I}^{k+1}_{{\rm BS},K}(u)\|_{\sharp,K}^2.
\end{equation}
Moreover,  if  $u \in H^{l+1}(\mesh; \mathbb{R})$, with $l\in \{1,\dots, k+1\}$, we have
\begin{equation} \label{eq:err2}
\su A_K  \Big\{\|\nabla (u-R_K^{k+1}(\hat{u}_K))\|_K^2 + \|\nabla (u-u_K)\|_K^2
 + S_{\dK}(\hat{u}_K,\hat{u}_K) \Big\}
\leq C \su A_K (k+1) \Big(\frac{ h_K}{k+1 }\Big)^{2l}\|u\|_{H^{l+1}(K)}^2.
\end{equation}
\end{subequations}
\end{theorem}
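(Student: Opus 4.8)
The plan is to follow the classical strategy of combining coercivity of $a_h$ with the consistency estimate from Lemma~\ref{lemma:consistency}, together with a triangle inequality between the discrete solution and the reduction $\Ihk(u)$. First I would set $\hat{e}_h := \hat{u}_h - \Ihk(u) \in \fesD$ (note the boundary conditions match up to the $L^2$-projection on $\FD$, so $\hat{e}_h$ indeed lies in $\fesD$). By the coercivity bound in~\eqref{coercivity}, $\alpha\|\hat{e}_h\|_{\textsc{hho}}^2 \le a_h(\hat{e}_h,\hat{e}_h) = a_h(\hat{u}_h,\hat{e}_h) - a_h(\Ihk(u),\hat{e}_h) = \ell_h(\hat{e}_h) - a_h(\Ihk(u),\hat{e}_h) = \langle \delta_h,\hat{e}_h\rangle$, using the discrete problem~\eqref{discrete problem} and the definition~\eqref{eq:def_delta} of $\delta_h$. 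Hence $\|\hat{e}_h\|_{\textsc{hho}} \le \alpha^{-1}\|\delta_h\|_{\textsc{hho}'}$, and Lemma~\ref{lemma:consistency} gives $\|\hat{e}_h\|_{\textsc{hho}}^2 \le C \su A_K(k+1)\|u - \mathcal{I}^{k+1}_{{\rm BS},K}(u)\|_{\sharp,K}^2$.

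Next I would translate this bound on $\|\hat{e}_h\|_{\textsc{hho}}$ into the three quantities appearing on the left-hand side of~\eqref{eq: R err1}. For the stabilization term, since $S_{\dK}$ is a (semidefinite) bilinear form, $S_{\dK}(\hat{u}_K,\hat{u}_K)^{1/2} \le S_{\dK}(\hat{e}_K,\hat{e}_K)^{1/2} + S_{\dK}(\mathcal{\hat{I}}^k_K(u|_K),\mathcal{\hat{I}}^k_K(u|_K))^{1/2}$; the first term is controlled by $\|\hat{e}_h\|_{\textsc{hho}}$ via the definition~\eqref{H1_seminorm}, and the second by Lemma~\ref{lem:local_red} together with the trivial bound $\|\nabla(v-\mathcal{I}^{k+1}_{{\rm BS},K}(v))\|_K \le \|v-\mathcal{I}^{k+1}_{{\rm BS},K}(v)\|_{\sharp,K}$. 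For the reconstruction error, I would write $u - R_K^{k+1}(\hat{u}_K) = (u - \mathcal{E}^{k+1}_K(u)) + (R_K^{k+1}(\mathcal{\hat{I}}^k_K(u|_K)) - R_K^{k+1}(\hat{u}_K))$ using the identity~\eqref{elliptic projection}; the gradient of the first piece is bounded by $\|u-\mathcal{E}^{k+1}_K(u)\|_{\sharp,K}$ and then by Lemma~\ref{lem: approxmation}, while the gradient of the second is $\|\nabla R_K^{k+1}(\hat{e}_K)\|_K \le \alpha^{-1/2}|\hat{e}_K|_{\fesE}$ by the boundedness part of~\eqref{local equivalent}. For the cell error $\nabla(u-u_K)$, I would insert $\Pi_K^{k+1}(u)$: the piece $\nabla(u - \Pi_K^{k+1}(u))$ is an $L^2$-projection error bounded again via the Babu\v{s}ka--Suri operator (using $\Pi_K^{k+1}\circ \mathcal{I}^{k+1}_{{\rm BS},K} = \mathcal{I}^{k+1}_{{\rm BS},K}$ and $H^1$-stability of $\Pi_K^{k+1}$), and the piece $\nabla(\Pi_K^{k+1}(u) - u_K) = \nabla e_K$ is controlled by $\|\nabla e_K\|_K \le |\hat{e}_K|_{\fesE}$. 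Summing the squares over $K\in\mesh$ (weighted by $A_K$) and using finite overlap of the extended cell stars then yields~\eqref{eq: R err1}.

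Finally, \eqref{eq:err2} follows from~\eqref{eq: R err1} by applying the Babu\v{s}ka--Suri approximation result~\eqref{Polynomial approximation} on each cell: with $r = l+1 \le k+2$ the operator $\mathcal{I}^{k+1}_{{\rm BS},K}$ satisfies $\|u - \mathcal{I}^{k+1}_{{\rm BS},K}(u)\|_{\sharp,K}^2 \le C\big(\|\nabla(u - \mathcal{I}^{k+1}_{{\rm BS},K}(u))\|_K^2 + \tfrac{h_K}{k+1}\|\nabla(u-\mathcal{I}^{k+1}_{{\rm BS},K}(u))\|_{\dK}^2\big)$; here I would bound the boundary term by the discrete trace inequality~\eqref{trace_inv} applied to the polynomial $\nabla(\mathcal{I}^{k+1}_{{\rm BS},K}(u)-u)$ after subtracting $\nabla u$ and using~\eqref{L2 projection Polynomial approximation} and a standard $hp$-trace estimate for $u$ itself, or more simply invoke~\eqref{Polynomial approximation} with $m=1$ and $m=0$ after a trace inequality, to get $\|u - \mathcal{I}^{k+1}_{{\rm BS},K}(u)\|_{\sharp,K}^2 \le C (h_K/(k+1))^{2l}\|u\|_{H^{l+1}(K)}^2$. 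Substituting into~\eqref{eq: R err1} and absorbing the extra $(k+1)$ factor gives~\eqref{eq:err2}.

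The main obstacle I anticipate is the bookkeeping in controlling the boundary part $\tfrac{h_K}{k+1}\|\nabla v\|_{\dK}^2$ of the $\|{\cdot}\|_{\sharp,K}$-norm for the Babu\v{s}ka--Suri error with the correct powers of $h_K$ and $k$: a naive discrete trace inequality on $\nabla(\mathcal{I}^{k+1}_{{\rm BS},K}(u)-u)$ is not directly available because $\nabla u$ is not a polynomial, so one must split off $\nabla u$, use a continuous trace/$hp$-trace inequality on the $H^{l+1}$-regular part, and carefully track that no spurious factor of $(k+1)$ creeps in beyond the single $\frac12$-order loss already present. Everything else is routine manipulation with the results already established in the excerpt.
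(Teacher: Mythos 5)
Your overall architecture --- coercivity of $a_h$ combined with the consistency bound of Lemma~\ref{lemma:consistency} applied to $\hat{e}_h=\pm(\Ihk(u)-\hat{u}_h)$, followed by triangle inequalities through the elliptic projection $\mathcal{E}^{k+1}_K$ and Lemmas~\ref{lem:local_red} and~\ref{lem: approxmation} --- is exactly the paper's proof, and your treatment of the reconstruction and stabilization terms is correct. The derivation of \eqref{eq:err2} from \eqref{eq: R err1} via the Babu\v{s}ka--Suri estimates and a trace argument on the boundary part of the $\|{\cdot}\|_{\sharp,K}$-norm also matches the paper, which applies a multiplicative trace inequality to $\nabla(u-\mathcal{I}^{k+1}_{{\rm BS},K}(u))$ and hence uses \eqref{Polynomial approximation} with $m=1$ and $m=2$ (not $m=0$ and $m=1$ as you wrote --- a minor slip, and indeed the delicate bookkeeping you anticipated).

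The one step that does not go through as written is your bound on the cell error $\|\nabla(u-u_K)\|_K$. You insert $\Pi_K^{k+1}(u)$ and invoke ``$H^1$-stability of $\Pi_K^{k+1}$'' to control $\|\nabla(u-\Pi_K^{k+1}(u))\|_K$ by $\|\nabla(u-\mathcal{I}^{k+1}_{{\rm BS},K}(u))\|_K$. The $L^2$-orthogonal projection onto $\mathbb{P}^{k+1}(K)$ is \emph{not} $H^1$-stable uniformly in $k$: already in one dimension, taking $u$ with $u'=L_{k+1}$ (a Legendre polynomial) gives $\|(\Pi^{k+1}u)'\|/\|u'\|\gtrsim (k+1)^{\frac12}$. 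With the correct, degree-dependent stability constant your squared bound would pick up an extra factor $(k+1)$, which happens to be absorbed by the $(k+1)$ already present on the right-hand side of \eqref{eq: R err1}, but you would then need a proof or reference for the $O(k^{\frac12})$ stability on simplices, which is not among the tools assembled in the paper. The paper avoids the issue entirely: it writes $u-u_K=(u-R_K^{k+1}(\hat{u}_K))+(R_K^{k+1}(\hat{u}_K)-u_K)$ and bounds the second piece via $\|\nabla(R_K^{k+1}(\hat{u}_K)-u_K)\|_K^2\le C\,S_{\dK}(\hat{u}_K,\hat{u}_K)$ from \eqref{local HHO  bound}, so the cell error is controlled by two quantities already estimated. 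Replace your $\Pi_K^{k+1}$ detour by this argument and the proof is complete.
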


\begin{proof}
(i) Proof of \eqref{eq: R err1}. We set $\hat{e}_h : = \Ihk(u) - \hat{u}_h$ and observe that $\hat{e}_h\in \fesD$. Moreover, since $a_h(\hat{e}_h,\hat{e}_h)= -\langle \delta_h,\hat{e}_h \rangle$, the coercivity property \eqref{coercivity} implies that
\begin{equation*}
\alpha \|\hat{e}_h\|_{\textsc{hho}}^2 \le a_h(\hat{e}_h,\hat{e}_h)= -\langle \delta_h,\hat{e}_h \rangle
\le \|\delta_h\|_{\textsc{hho}'} \|\hat{e}_h\|_{\textsc{hho}},
\end{equation*}
so that $\|\hat{e}_h\|_{\textsc{hho}}\le \frac{1}{\alpha}\|\delta_h\|_{\textsc{hho}'}$.
Since $\su A_K \big\{\|\nabla R_K^{k+1}(\hat{e}_K)\|_K^2 + S_{\dK}(\hat{e}_K,\hat{e}_K) \big\}\le \alpha^{-1} \|\hat{e}_h\|_{\textsc{hho}}^2$ follows from \eqref{coercivity}, we infer from Lemma \ref{lemma:consistency} that
\begin{equation*}
\su A_K \Big\{ \|\nabla R_K^{k+1}(\hat{e}_K)\|_K^2 + S_{\dK}(\hat{e}_K,\hat{e}_K) \Big\}
\leq C \su A_K (k+1)\|u-\mathcal{I}^{k+1}_{{\rm BS},K}(u)\|^2_{\sharp,K}.
\end{equation*}
Since $u-R_K^{k+1}(\hat{u}_K)=(u-\mathcal{E}^{k+1}_K(u))+R_K^{k+1}(\hat{e}_K)$ because $\hat{u}_K = \hat{I}^k_K(u) - \hat{e}_K$,
the triangle inequality combined with Lemma~\ref{lem:local_red}, Lemma~\ref{lem: approxmation},  and the above bound
proves that
$$
\su  A_K\Big\{ \|\nabla (u-R_K^{k+1}(\hat{u}_K))\|_K^2  +S_{\dK}(\hat{u}_K,\hat{u}_K) \Big\} \leq C 	\su A_K(k+1) \|u- \mathcal{I}^{k+1}_{{\rm BS},K}(u)\|_{\sharp,K}^2.
$$
Moreover, using $\|\nabla (u-u_K)\|_K^2 \leq 2(\|\nabla (u-R_K^{k+1}(\hat{u}_K))\|_K^2 + \|\nabla (u_K-R_K^{k+1}(\hat{u}_K))\|_K^2)$ and  \eqref{local HHO  bound}, we infer that
\begin{align*}
\su A_K \|\nabla (u-u_K)\|_K^2 &\leq 2\su  A_K\Big\{ \|\nabla (u-R_K^{k+1}(\hat{u}_K))\|_K^2 + CS_{\dK}(\hat{u}_K,\hat{u}_K) \Big\} \\
&\leq C	\su A_K(k+1) \|u- \mathcal{I}^{k+1}_{{\rm BS},K}(u)\|_{\sharp,K}^2.
\end{align*}
This completes the proof of  \eqref{eq: R err1}.

(ii) The proof of  \eqref{eq:err2} follows by invoking on all $K\in\mesh$ the approximation properties of $\mathcal{I}_{{\rm BS},K}^{k+1}$
(see Lemma~\ref{lemma: hp-Polynomial approximation}), together with a multiplicative trace inequality (see, e.g.,
\cite[Sec.~12.3.2]{Ern_Guermond_FEs_I_2021}) to estimate the term involving the normal derivative in $\|u- \mathcal{I}^{k+1}_{{\rm BS},K}(u)\|_{\sharp,K}$.
\end{proof}

\section{$hp$-a posteriori error analysis}\label{sec: a posteriori error analysis}

In this section, we perform the $hp$-a posteriori error analysis of the above HHO discretization.
We establish both (global) upper and (local) lower error bounds.
Since we are interested in the $hp$-a posteriori error analysis, we only consider in what follows
the case $k\geq1$ so that we can invoke the (modified) Karkulik--Melenk interpolation operator. 
We shall also assume that the Dirichlet datum satisfies $g_{\ddd} \in H^{\frac12}(\partial\Omega)$. For all $K\in\mesh$, we define the following data oscillation terms:
\begin{subequations} \label{eq:def_oscill} \begin{align}
O_{K}(f) &:= A_K^{-\frac12}\Big(\frac{h_K}{k+1}\Big)\ltwo{f - \Pi_{K}^{k+1}(f)}{K}, \label{eq:def_Okf}\\
O_{K}(g_\dn)&:= A_K^{-\frac12}\Big(\frac{h_K}{k+1}\Big)^{\frac12}\ltwo{g_\dn-\Pi_{\dK}^{k} (g_\dn|_{\dK}) }{\dKN},\\
O_{K}(g_{\ddd}) &:= A_K^{\frac12}\Big(\frac{ h_K}{k+1}\Big)^{\frac12} \ltwo{ \nabla  (g_\ddd-  \Pi_{\dK}^{k+1} (g_{\ddd}|_{\dK})){\times} \n_{\Omega} }{\dKD}.
\end{align} \end{subequations}

It is useful to define some contrast factors related to the diffusion coefficient $A$.
For all $K\in\mesh$ and all $\ba \in \vertice$, we set
\begin{equation} \label{eq:def_contrast}
\chi_K(A) := A_K \max_{\hat{K}  \in \mathrm{es}(K)} A_{\hat{K}}^{-1},
\qquad
\chi_\ba(A) := A_{\ba}^{\sharp} (A_{\ba}^{\flat})^{-1},
\end{equation}
with $A_{\ba}^{\sharp}: = \max_{K\in \mesha} A_{K}$ and  $A_{\ba}^{\flat}: = \min_{K\in \mesha} A_{K}$. Moreover, it is convenient to set
\begin{subequations} \label{eq:def_AflatF}\begin{alignat}{2}
A_F^{\flat}&: = \min(A_{K_1},A_{K_2}), &\quad &\forall F = \partial {K_1} \cap \partial {K_2} \in \Fint, \\
A_F^{\flat}&:= A_K, &\quad &\forall F = \dK \cap \partial {\Omega} \in \Fb.
\end{alignat} \end{subequations}
For all $K\in\mesh$, we also define $A^\flat_{\dK}|_F := A^\flat_F$ for all $F\in\FK$.

\subsection{Global upper error bound}

We decompose the error into two components as follows:
\begin{equation}\label{def: error splitting}
e:=u- u_{\mesh}=(u-u_c)+(u_c- u_{\mesh})=:e_c+e_d, \qquad u_c\in H^1_{g,\ddd}(\Omega),
\end{equation}
where $u_{\mesh}$ denotes the cell component of the HHO solution such that $u_{\mesh}|_K : = u_K$ for all $K\in\mesh$.  The function $u_c$ is constructed from $u_{\mesh}$ as detailed in Section \ref{sec: nonconforming bound}. The precise definition of $u_c$ is irrelevant for bounding $e_c$ (we only use that $e_c\in H^1_{0,\ddd}(\Omega)$), and is only relevant for bounding $e_d$. We call $e_c$ the conforming error and $e_d$ the nonconforming error.

\subsubsection{Bound on  conforming error $e_c$}

In this section, we derive two bounds on the conforming error $e_c$. The first bound
avoids the normal flux jump (classically considered in the context of finite elements) and is
$\frac12$-order $p$-suboptimal. The second bound includes the normal flux jump and is $p$-optimal.
The first bound is, however, interesting in its own right. We will also see that, in the context of the lower
error bound, the normal flux jump leads to $\frac12$-order $p$-suboptimality anyway. The proofs of the following two results
are postponed to Section~\ref{sec:proofs}.

\begin{lemma}[Conforming error ($p$-suboptimal bound)]\label{Lemma: Conforming estimator p suboptimal}
The following holds:
\begin{align}
\|A^{\frac12}\nabla e_c\|_{\Omega}  \leq{}& C_{\rm c,1} \bigg\{  \su   \Big\{ A_K^{-1} \Big(\frac{h_K}{k+1}\Big)^2\|\Pi_{K}^{k+1}(f)+ A_K\Delta R_K^{k+1}(\hat{u}_K)\|_K^2
+A_K(k+1)S_{\dK}(\hat{u}_K,\hat{u}_K) \nno \\
&+ O_K(f)^2 + O_K(g_\dn)^2  \Big\}  \bigg\}^{\frac12}  + \|A^{\frac12}\bnabla e_d\|_{\Omega},
\label{eq:nonconforming error bound without normal flux jump}
\end{align}
where the constant $C_{\rm c,1}$ depends on the mesh shape-regularity and on $\max_{K\in\mesh} \chi_K(A)^{\frac12}$, but is independent of $h$ and $k$.
\end{lemma}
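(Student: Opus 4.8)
The goal is to estimate $\|A^{\frac12}\nabla e_c\|_\Omega$ where $e_c = u - u_c \in H^1_{0,\ddd}(\Omega)$. The natural starting point is to test the weak formulation~\eqref{eq:prob} against $e_c$ itself (this is legitimate since $e_c$ vanishes on $\Gamma_\ddd$), which gives
\[
\|A^{\frac12}\nabla e_c\|_\Omega^2 = (A\nabla u,\nabla e_c)_\Omega - (A\nabla u_c,\nabla e_c)_\Omega = (f,e_c)_\Omega + (g_\dn,e_c)_{\Gamma_\dn} - (A\nabla u_c,\nabla e_c)_\Omega.
\]
Then I would rewrite $(A\nabla u_c,\nabla e_c)_\Omega$ by inserting the broken reconstruction, i.e.\ write $u_c$-related terms in terms of $R_K^{k+1}(\hat u_K)$ and control the difference $\nabla(u_c - R^{k+1}_\mesh(\hat u_h))$ cellwise; from the construction of $u_c$ in Section~\ref{sec: nonconforming bound} this difference is exactly the nonconformity contribution, accounting for the $\|A^{\frac12}\bnabla e_d\|_\Omega$ term appearing on the right-hand side of~\eqref{eq:nonconforming error bound without normal flux jump}. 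Equivalently (and more cleanly) I expect the proof to pass through the splitting $(A\nabla u_c,\nabla e_c)_\Omega = (A\bnabla R^{k+1}_\mesh(\hat u_h),\nabla e_c)_\Omega - (A\bnabla e_d,\nabla e_c)_\Omega$ where the last term is bounded directly by Cauchy--Schwarz yielding $\|A^{\frac12}\bnabla e_d\|_\Omega\,\|A^{\frac12}\nabla e_c\|_\Omega$.

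The heart of the matter is then to bound
\[
(f,e_c)_\Omega + (g_\dn,e_c)_{\Gamma_\dn} - (A\bnabla R^{k+1}_\mesh(\hat u_h),\nabla e_c)_\Omega.
\]
The idea is to integrate by parts cellwise in the last term: $(A_K\nabla R_K^{k+1}(\hat u_K),\nabla e_c)_K = -(\nabla\cdot(A_K\nabla R_K^{k+1}(\hat u_K)),e_c)_K + (A_K\nabla R_K^{k+1}(\hat u_K)\cdot\n_K,e_c)_{\dK}$, so that the volume residual becomes $f + A_K\Delta R_K^{k+1}(\hat u_K)$ on each cell, and the boundary contributions reorganize into sums over faces of the equilibrated fluxes $\phi_{K,F}(\hat u_K)$ from~\eqref{eq:equilibrated fluxes}, plus a stabilization remainder. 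The key trick to \emph{avoid the normal flux jump} is to use the local conservation property~\eqref{conservation property}: because $\phi_{K_1,F}(\hat u_K{}_1) + \phi_{K_2,F}(\hat u_K{}_2)=0$ at interfaces and $\phi_{K,F}(\hat u_K)+\Pi_F^k(g_\dn|_F)=0$ at Neumann faces, the face terms telescope and one can replace $e_c$ on each face by $e_c$ minus a suitable local average / polynomial projection (since $e_c\in H^1_{0,\ddd}$ we may subtract any constant or, better, subtract $\Pi^k$ on each face, exploiting that the flux is a polynomial of degree $k$). This turns every face term into something controlled by the \emph{stabilization} $S_{\dK}(\hat u_K,\hat u_K)$ — via the definition~\eqref{eq:equilibrated fluxes} of $\phi_{K,F}$, the bound~\eqref{local HHO bound}, the discrete trace inequality~\eqref{trace_inv}, and the $L^2$-projection approximation~\eqref{L2 projection Polynomial approximation} applied to $e_c$.

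Next I would introduce the modified Karkulik--Melenk quasi-interpolant $\ImKM(e_c)\in\mathbb{P}^k(\mesh)\cap H^1_{0,\ddd}(\Omega)$ and exploit its orthogonality-type / Galerkin properties against the discrete problem: since $\ImKM(e_c)$ is a legitimate broken polynomial it interacts with the HHO solution in a controlled way, and the residual $f+A_K\Delta R_K^{k+1}(\hat u_K)$ tested against $e_c - \ImKM(e_c)$ can be estimated by $O_K(f)$ plus the $H^1$-approximation estimate~\eqref{eq: Modified KM approximation}, which supplies the crucial factor $h_K/k$ converting $\|f - \Pi_K^{k+1}f\|_K$ and the polynomial residual norm into the estimator scalings in~\eqref{eq:nonconforming error bound without normal flux jump}. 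The Neumann data oscillation $O_K(g_\dn)$ enters analogously from the mismatch $g_\dn - \Pi_{\dK}^k(g_\dn)$. Finally, summing over cells, applying Cauchy--Schwarz over $K\in\mesh$ (using finite overlap of the stars $\mathrm{es}(K)$ from shape-regularity), and absorbing the factor $\|A^{\frac12}\nabla e_c\|_\Omega$ yields the claimed bound; the diffusion-contrast factor $\max_K\chi_K(A)^{\frac12}$ appears precisely when matching the weights $A_K^{\pm\frac12}$ in the oscillation terms and stabilization against the $A$-weighted energy norm of $e_c$ across neighbouring cells.

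The main obstacle, as I see it, is the face-term bookkeeping: one must combine the telescoping from local conservation with the fact that $e_c$ (unlike a discrete nonconforming function) is globally $H^1$, and do so with the right $hp$-scaling so that the normal-flux contributions are completely absorbed into $S_{\dK}(\hat u_K,\hat u_K)$ rather than producing a genuine jump estimator — this is exactly the novelty flagged in the introduction, and getting the powers of $(k+1)$ and $h_K$ to line up (the $(k+1)^2/h_K$ weight in $S_{\dK}$ versus the $h_K/(k+1)$ weight from trace/projection estimates) is the delicate point. A secondary technical nuisance is handling $\Gamma_\ddd$ correctly so that $\ImKM(e_c)$ and $e_c$ both lie in $H^1_{0,\ddd}(\Omega)$ and the boundary face sums over $\FD$ drop out; this relies on $g_\ddd\in H^{\frac12}(\partial\Omega)$ and on the properties of $I^1_{\mathrm{av},\ddd}$ used to build $\ImKM$.
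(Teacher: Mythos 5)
Your overall architecture is the paper's: split $e=e_c+e_d$, bound the $e_d$ contribution by Cauchy--Schwarz, integrate by parts cellwise, invoke the local conservation property \eqref{conservation property} to trade the normal flux for the stabilization, and use the modified Karkulik--Melenk interpolant to supply the $h_K/(k+1)$ weights. The one step that fails as written is your treatment of the face terms. After conservation removes the $\phi_{K,F}$ contributions (they telescope against any single-valued trace), what remains on each cell boundary is $A_K\frac{(k+1)^2}{h_K}\big(\Pi_{\dK}^k(u_K-u_{\dK}),\,\cdot\,\big)_{\dK}$ acting on the trace of whatever you are pairing with. You propose that "since $e_c\in H^1_{0,\ddd}$ we may subtract any constant or, better, subtract $\Pi^k$ on each face". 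Neither works: subtracting a constant leaves a non-vanishing term of order $h_K^{-1}\|e_c\|_K$, which is not locally small; and you are not free to subtract $\Pi_F^k(e_c)$, since by orthogonality $(\Pi_{\dK}^k(u_K-u_{\dK}),e_c)_F=(\Pi_{\dK}^k(u_K-u_{\dK}),\Pi_F^k(e_c))_F$, so "subtracting the projection" would annihilate the term outright, which is not a legitimate identity. The only mechanism that makes the face terms small is the same Galerkin orthogonality you invoke for the volume residual: taking the HHO test function $\hat{w}_h$ with cell and face components equal to $\ImKM(e_c)$ (for which $R_K^{k+1}(\hat{w}_K)=w_K$ and $S_{\dK}(\hat{u}_K,\hat{w}_K)=0$) makes \emph{every} residual term, volume and face alike, act on $\eta:=e_c-\ImKM(e_c)$, whose trace satisfies $((k+1)/h_K)^{1/2}\|\eta\|_{\dK}\leq C\|\nabla e_c\|_{\mathrm{es}(K)}$ by \eqref{eq: Modified KM approximation}. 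The mismatch between this weight $((k+1)/h_K)^{1/2}$ and the weight $((k+1)^2/h_K)^{1/2}$ in $S_{\dK}$ is exactly where the factor $(k+1)$ in front of $S_{\dK}$ in \eqref{eq:nonconforming error bound without normal flux jump} comes from; conservation is then applied to $(A_K\nabla R_K^{k+1}(\hat{u}_K){\cdot}\n_K,\eta_{\dK})_{\dK}$, not to a pairing with $e_c$ itself.

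Two smaller points. The identity $(A\nabla u_c,\nabla e_c)_{\Omega}=(A\bnabla R^{k+1}_{\mesh}(\hat{u}_h),\nabla e_c)_{\Omega}-(A\bnabla e_d,\nabla e_c)_{\Omega}$ is false: the correct splitting involves $\bnabla u_{\mesh}$, and passing from $u_{\mesh}$ to $R^{k+1}_{\mesh}(\hat{u}_h)$ leaves the term $\su(A_K\nabla(R_K^{k+1}(\hat{u}_K)-u_K),\nabla e_c)_K$, which must be retained and bounded through \eqref{local HHO  bound}; it is controlled by the stabilization, not by $e_d$, so $u_c-R^{k+1}_{\mesh}(\hat{u}_h)$ is not "exactly the nonconformity contribution". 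These are repairable bookkeeping slips, but the face-term mechanism above is a genuine gap in the argument as proposed.
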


\begin{lemma}[Conforming error ($p$-optimal bound)]\label{Lemma: Conforming estimator}
The following holds:
\begin{align}
\|A^{\frac12}\nabla e_c\|_{\Omega}  \leq {}& C_{\rm c,2} \bigg\{  \su \Big\{  A_K^{-1} \Big(\frac{h_K}{k+1}\Big)^2\|\Pi_{K}^{k+1}(f)+ A_K\Delta R_K^{k+1}(\hat{u}_K)\|_K^2 +A_KS_{\dK}(\hat{u}_K,\hat{u}_K) \nno \\
& + A_K^{-1} \Big(\frac{h_K}{k+1}\Big) \Big(   \| \jumpK{A\nabla R_{\mesh}^{k+1}(\hat{u}_h)} {\cdot}\n_K \|_{\dKi}^2
+  \ltwo{A_K\nabla R_K^{k+1}(\hat{u}_K)  {\cdot}\n_{K} - \Pi_{\dK}^k (g_{\dn}|_{\dK})}{\dKN}^2 \Big) \nno \\
&+O_K(f)^2 +  O_K(g_{\dn})^2  \Big\} \bigg\}^{\frac12} + \|A^{\frac12}\bnabla e_d\|_{\Omega},
\label{eq:conforming error bound p-optimal}
\end{align}
where the constant $C_{\rm c,2}$ depends on the mesh shape-regularity and on $\max_{K\in\mesh} \chi_K(A)^{\frac12}$, but is independent of $h$ and $k$.
\end{lemma}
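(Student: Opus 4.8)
The plan is to run a residual-type argument on the conforming part $e_c\in H^1_{0,\ddd}(\Omega)$, combining the weak formulation~\eqref{eq:prob} for $u$ with the discrete problem~\eqref{discrete problem} for $\hat u_h$ (to get a Galerkin-orthogonality cancellation), and using the modified Karkulik--Melenk operator $\ImKM$ of Corollary~\ref{cor: global interpolation of KM} as quasi-interpolant; the $k$-uniform bound~\eqref{eq: Modified KM approximation}, carrying only $\|\nabla e_c\|$ on its right-hand side, is precisely what will keep the residual, flux-jump and oscillation contributions $p$-optimal.

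First I would write, using $e_c\in H^1_{0,\ddd}(\Omega)$ and~\eqref{eq:prob} and inserting the broken reconstruction into $(A\nabla u_c,\nabla e_c)_\Omega$, that $\|A^{\frac12}\nabla e_c\|_\Omega^2=\mathcal A(e_c)-\big(A\bnabla(u_c-R_{\mesh}^{k+1}(\hat u_h)),\nabla e_c\big)_\Omega$ with $\mathcal A(v):=\su(f,v)_K+\sum_{F\in\FN}(g_{\dn},v)_F-(A\bnabla R_{\mesh}^{k+1}(\hat u_h),\nabla v)_\Omega$ for all $v\in H^1_{0,\ddd}(\Omega)$. A cellwise Green's formula (using that $A_K$ is constant on $K$, that $e_c$ is single-valued across interfaces, and that $e_c|_{\Gamma_{\ddd}}=0$) rewrites $\mathcal A$ in residual form,
\begin{multline*}
\mathcal A(v)=\su\big(f+A_K\Delta R_K^{k+1}(\hat u_K),v\big)_K-\sum_{F\in\Fint}\big(\jump{A\nabla R_{\mesh}^{k+1}(\hat u_h)}{\cdot}\n_F,v\big)_F\\
+\sum_{F\in\FN}\big(g_{\dn}-A_K\nabla R_K^{k+1}(\hat u_K){\cdot}\n_K,v\big)_F,
\end{multline*}
which is the form I would actually estimate. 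Then I would use Galerkin orthogonality: for $v:=\ImKM(e_c)\in\mathbb{P}^k(\mesh)\cap H^1_{0,\ddd}(\Omega)$ the reduction $\Ihk(v)$ lies in $\fesD$ and its local cell and face components both equal the cellwise polynomial $v|_K$ of degree $\le k$, so $R_K^{k+1}(\mathcal{\hat{I}}^k_K(v))=\mathcal E^{k+1}_K(v)$ by~\eqref{elliptic projection} while $S_{\dK}(\mathcal{\hat{I}}^k_K(v),\mathcal{\hat{I}}^k_K(v))=0$; testing~\eqref{discrete problem} with $\Ihk(v)$ then yields $\su A_K(\nabla R_K^{k+1}(\hat u_K),\nabla v)_K=\su(f,v)_K+\sum_{F\in\FN}(g_{\dn},v)_F$, i.e.\ $\mathcal A(\ImKM(e_c))=0$. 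Hence
\[
\|A^{\frac12}\nabla e_c\|_\Omega^2=\mathcal A\big(e_c-\ImKM(e_c)\big)-\big(A\bnabla(u_c-R_{\mesh}^{k+1}(\hat u_h)),\nabla e_c\big)_\Omega.
\]

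It then remains to bound the two terms on the right-hand side. For the last one I would write $u_c-R_{\mesh}^{k+1}(\hat u_h)=e_d-(R_{\mesh}^{k+1}(\hat u_h)-u_{\mesh})$, apply Cauchy--Schwarz and~\eqref{local HHO  bound} cellwise, which bounds it by $\big(\|A^{\frac12}\bnabla e_d\|_\Omega+C(\su A_KS_{\dK}(\hat u_K,\hat u_K))^{\frac12}\big)\|A^{\frac12}\nabla e_c\|_\Omega$ and so delivers the stabilization estimator and the $\|A^{\frac12}\bnabla e_d\|_\Omega$ term of~\eqref{eq:conforming error bound p-optimal}. For $\mathcal A(e_c-\ImKM(e_c))$ I would split $f=\Pi_K^{k+1}(f)+(f-\Pi_K^{k+1}(f))$ and $g_{\dn}=\Pi_{\dK}^k(g_{\dn})+(g_{\dn}-\Pi_{\dK}^k(g_{\dn}))$ (since $\Delta R_K^{k+1}(\hat u_K)\in\mathbb{P}^{k-1}(K)$, the projection $\Pi_K^{k+1}$ acts trivially in the residual), the remainders producing $O_K(f)$ and $O_K(g_{\dn})$; then I would apply Cauchy--Schwarz on each cell and face and estimate every occurrence of $\|e_c-\ImKM(e_c)\|_K$ and $\|e_c-\ImKM(e_c)\|_{\dK}$ by~\eqref{eq: Modified KM approximation}, which yields the factors $\tfrac{h_K}{k+1}$ and $(\tfrac{h_K}{k+1})^{\frac12}$ and the $H^1$-seminorm of $e_c$ over the extended cell star $\mathrm{es}(K)$. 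Using $A_K^{\frac12}\|\nabla e_c\|_{\mathrm{es}(K)}\le\chi_K(A)^{\frac12}\|A^{\frac12}\nabla e_c\|_{\mathrm{es}(K)}$ brings in the contrast factor, and a global Cauchy--Schwarz over the cells together with the bounded overlap of the extended stars (mesh shape-regularity) reconstructs $\|A^{\frac12}\nabla e_c\|_\Omega$; dividing through then gives~\eqref{eq:conforming error bound p-optimal} with $C_{\rm c,2}$ depending on $\max_{K\in\mesh}\chi_K(A)^{\frac12}$.

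The step to get right --- more a point of care than a genuine obstacle --- is the Galerkin-orthogonality identity $\mathcal A(\ImKM(e_c))=0$: testing with $\Ihk(\ImKM(e_c))$ must kill the stabilization contribution \emph{exactly}, which happens because the reduction of a cellwise degree-$\le k$ polynomial has identical cell and face components; this, combined with the $k$-uniform approximation bound~\eqref{eq: Modified KM approximation} of the modified Karkulik--Melenk operator, is what keeps the residual, normal-flux-jump and data-oscillation terms $p$-optimal here (no spurious power of $k$), unlike in the $p$-suboptimal variant of Lemma~\ref{Lemma: Conforming estimator p suboptimal}. The remaining work is bookkeeping: tracking the diffusion-contrast factors and the finite overlap of the extended stars.
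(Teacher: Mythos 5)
Your proposal is correct and follows essentially the same route as the paper: both split off $(A\bnabla e_d,\nabla e_c)_\Omega$ (your $u_c-R_{\mesh}^{k+1}(\hat u_h)$ term is just $e_d$ plus the $R_K^{k+1}(\hat u_K)-u_K$ discrepancy handled by \eqref{local HHO  bound}), both exploit that the reduction of a cellwise degree-$\le k$ conforming polynomial has $R_K^{k+1}$ equal to itself and zero stabilization (your ``$\mathcal A(\ImKM(e_c))=0$'' is the paper's add-and-subtract of the discrete problem tested with $\ImKM(e_c)$), and both then apply Cauchy--Schwarz with the modified Karkulik--Melenk bounds \eqref{eq: Modified KM approximation} to obtain the $p$-optimal residual, normal-flux-jump, Neumann and oscillation terms. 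The only cosmetic difference is that you package the residual as an explicit functional $\mathcal A$ with a Galerkin-orthogonality statement, whereas the paper reuses the intermediate identity \eqref{eq:error equation 2} from the proof of Lemma~\ref{Lemma: Conforming estimator p suboptimal}.
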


\begin{remark}[Alternative bound]
The bounds on the conforming error derived in Lemma~\ref{Lemma: Conforming estimator p suboptimal} and in Lemma~\ref{Lemma: Conforming estimator} hinge on the $hp$-approximation properties of the modified Karkulik--Melenk interpolation operator. If one considers instead the classical HHO interpolation operator based on local $L^2$-orthogonal projection onto cells and faces, one can derive an upper bound on the conforming error containing only the stabilization term and the data oscillation, but the price to pay is a $p$-suboptimality by half-order scaling the stabilization term. Details are omitted for brevity.
\end{remark}

\subsubsection{Bound on  nonconforming error $e_d$} \label{sec: nonconforming bound}

We start by defining the function $u_c\in H^1_{g,\ddd}(\Omega)$ introduced in \eqref{def: error splitting}.
To this purpose, in the spirit of \cite{ErnVo:20}, we solve local minimization problems in $H^1(\oma)$ with suitable boundary conditions for every mesh
vertex $\ba\in\vertice$ (recall that $\oma$ denotes the star associated with the vertex $\ba$).
Let $\psi_{\ba}$ be the hat basis function equal to $1$ at $\ba$ and  having a support in the vertex star $\oma$. Recall that the hat basis functions satisfy the following partition-of-unity property:
\begin{equation} \label{eq:PU}
\sum_{\ba\in \vertice} \psi_\ba = 1.
\end{equation}

\begin{definition}[Patchwise and global potential reconstruction]\label{def: Patchwise and global potential reconstruction}
For all $\ba\in \vertice$, let $u^{\ba}_c\in H^1_{g,\ddd}(\oma)$ be the solution of the following well-posed problem:
\begin{equation}\label{def: Patch reconstruction}
(A \nabla u^{\ba}_c, \nabla v_{\ba})_{\oma}= (A \bnabla (\psi_{\ba} u_{\mesh}), \nabla v_{\ba})_{\oma},
\qquad \forall v_{\ba} \in H^1_{0,\ddd} (\oma),
\end{equation}
with
\begin{equation}\label{def: Vertx space}
H^1_{g,\ddd}(\oma): = \{v\in H^1(\oma)\:|\: v|_{\partial \oma \cap  \Gamma_{\ddd}} = \psi_{\ba} g_\ddd  \text{ and }   v|_{\partial \oma \cap \Omega} =0 \}.
\end{equation}
An equivalent definition is
\begin{equation}\label{local minimizatiion interiori}
u^{\ba}_c: = \arg \min_{\rho_{\ba}\in H^1_{g,\ddd}(\oma)}\| A^{\frac12} (\nabla \rho_{\ba} - \bnabla(\psi_{\ba} u_{\mesh}))\|_{\oma}.
\end{equation}
Then, extending $u^{\ba}_c$ by zero to $\Omega$,  we set
\begin{equation}\label{def: patch reconstruction on the whole domain}
u_c: = \sum_{\ba\in \vertice} u^{\ba}_c.
\end{equation}
\end{definition}
Notice that we indeed have $u_c \in H^1_{g,\ddd}(\Omega)$ as required in \eqref{def: error splitting}; this follows from the partition-of-unity property~\eqref{eq:PU} and the definition~\eqref{def: Vertx space} of $H^1_{g,\ddd}(\oma)$.
The proof of the following result
is postponed to Section~\ref{sec:proofs}.

\begin{lemma}[Local nonconforming error]\label{lemma: nonconforming error bound on patches}
Let $u^{\ba}_c$ solve \eqref{def: Patch reconstruction} and set $e_{d}^{\ba}:= u^{\ba}_c - \psi_{\ba}  u_{\mesh}$
for all $\ba \in \vertice$. The following holds:
\begin{align}
\|A^{\frac12}\bnabla e_{d}^{\ba}\|_{\oma} \le {}&
C_{\rm d}^{\ba} \bigg\{ \!\sua\!  \Big\{ A_K S_{\dK}(\hat{u}_K,\hat{u}_K) + O_K(g_{\ddd})^2 \Big\}
+ \!\! \sum_{F\in \Fa \cap \Fint  } \!\!
A_F^\flat \Big(\frac{h_F}{k+1}\Big) \ltwo{\jump{ \nabla   u_{\mesh}} {\times} \n_F}{F}^2 \nno \\
& + \!\!\sum_{F\in \Fa \cap \FD }\!\! A_F^\flat \Big(\frac{h_F}{k+1}\Big)
\ltwo{ \nabla ( u_{\mesh}-\Pi_{\dK}^{k+1}(g_\ddd|_{\dK}))  {\times} \n_{\Omega}}{F}^2 \bigg\}^{\frac12},
\label{eq:nonconforming error bound on patch}
\end{align}
where the constant $C_{\rm d}^{\ba}$ depends on the mesh shape-regularity and on $\chi_{\ba}(A)^{\frac12}$, but is independent of $h$ and $k$.
\end{lemma}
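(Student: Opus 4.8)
The plan is to exploit the variational characterization \eqref{local minimizatiion interiori}: since $u_c^{\ba}$ minimizes $\|A^{\frac12}(\nabla\rho_{\ba}-\bnabla(\psi_{\ba}u_{\mesh}))\|_{\oma}$ over $\rho_{\ba}\in H^1_{g,\ddd}(\oma)$, it suffices to exhibit \emph{one} competitor $\rho_{\ba}$ in this affine space and bound $\|A^{\frac12}(\nabla\rho_{\ba}-\bnabla(\psi_{\ba}u_{\mesh}))\|_{\oma}$ by the right-hand side of \eqref{eq:nonconforming error bound on patch}. Since $e_d^{\ba}=u_c^{\ba}-\psi_{\ba}u_{\mesh}$ and $\nabla u_c^{\ba}$ is the $A$-orthogonal projection of $\bnabla(\psi_{\ba}u_{\mesh})$ onto $\{\nabla v : v\in H^1_{0,\ddd}(\oma)\}+\nabla\rho_{\ba}$-shifted space, we get $\|A^{\frac12}\bnabla e_d^{\ba}\|_{\oma}\le\|A^{\frac12}(\nabla\rho_{\ba}-\bnabla(\psi_{\ba}u_{\mesh}))\|_{\oma}$ for any admissible $\rho_{\ba}$. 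The natural choice of competitor is a conforming (e.g.\ nodal-averaged or smoothed) approximation of $\psi_{\ba}u_{\mesh}$ that matches the prescribed trace $\psi_{\ba}g_{\ddd}$ on $\partial\oma\cap\Gamma_{\ddd}$ and vanishes on $\partial\oma\cap\Omega$; I would build it from the first-order nodal-averaging operator $I^1_{{\rm av},\ddd}$ (as in the proof of Corollary~\ref{cor: global interpolation of KM}), localized to the star, correcting near the Dirichlet boundary using the data $g_{\ddd}$.

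The key steps, in order, are: (1) record the minimization/orthogonality bound $\|A^{\frac12}\bnabla e_d^{\ba}\|_{\oma}\le\|A^{\frac12}(\nabla\rho_{\ba}-\bnabla(\psi_{\ba}u_{\mesh}))\|_{\oma}$ for admissible $\rho_{\ba}$; (2) pull the diffusion contrast out via $\chi_{\ba}(A)$, reducing to a weight-free estimate of $\|\bnabla(\psi_{\ba}u_{\mesh})-\nabla\rho_{\ba}\|_{\oma}$; (3) decompose $\bnabla(\psi_{\ba}u_{\mesh})-\nabla\rho_{\ba}$ and observe that, since $\psi_{\ba}u_{\mesh}$ is in general discontinuous across interfaces in $\Fa$ whereas $\rho_{\ba}\in H^1(\oma)$, the discrepancy is controlled by the jumps of $\psi_{\ba}u_{\mesh}$ across those interfaces plus the mismatch of its trace with $\psi_{\ba}g_{\ddd}$ on $\FD\cap\Fa$; (4) apply an $hp$-trace-lifting / nodal-averaging estimate on the star (scaled by $h_F/(k+1)$, shape-regularity providing the needed mesh comparability on the star) to bound the $H^1(\oma)$-distance by $\sum_{F\in\Fa\cap\Fint}(h_F/(k+1))\|\jump{\psi_{\ba}u_{\mesh}}\|_F^2+\dots$; (5) split each interface jump $\jump{\psi_{\ba}u_{\mesh}}$ into its normal and tangential parts along $F$, noting that $\psi_{\ba}$ is continuous so $\jump{\psi_{\ba}u_{\mesh}}=\psi_{\ba}\jump{u_{\mesh}}$, hence its tangential component reproduces the tangential-jump term $\|\jump{\nabla u_{\mesh}}{\times}\n_F\|_F$ after an integration/averaging argument relating the jump of a function to the tangential jump of its gradient on a flat face, while the \emph{normal} part of the jump is absorbed into the stabilization $S_{\dK}(\hat u_K,\hat u_K)$ using that the face unknowns $u_F$ are single-valued and that $\Pi_{\dK}^k(u_K-u_F)$ appears in \eqref{def: stabilization}; (6) treat the Dirichlet faces analogously, where the trace mismatch with $g_{\ddd}$ produces the oscillation term $O_K(g_{\ddd})$ (via $\Pi_{\dK}^{k+1}(g_{\ddd})$) together with the tangential-jump-type term $\|\nabla(u_{\mesh}-\Pi_{\dK}^{k+1}(g_{\ddd})){\times}\n_{\Omega}\|_F$; (7) collect constants into $C_{\rm d}^{\ba}$, keeping track that the only $A$-dependence is through $\chi_{\ba}(A)^{\frac12}$ and the face weights $A_F^{\flat}$.

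I expect the main obstacle to be step (5): converting the $L^2(F)$ norm of the \emph{function} jump $\jump{u_{\mesh}}$ into the $L^2(F)$ norm of the \emph{tangential gradient} jump $\jump{\nabla u_{\mesh}}{\times}\n_F$ with $hp$-explicit constants. The point is that a jump in the traces of a broken polynomial is not a local quantity on a single face in the way its tangential-derivative jump is; one must use that $u_{\mesh}$ restricted to each cell is a genuine polynomial and integrate the tangential-jump from a reference point on $F$, or equivalently invoke a Poincaré–type inequality on the face where the zero-average (or pointwise-matching at a vertex) part is handled separately and its residual is exactly what the HHO stabilization controls via the $L^2$-projection of $u_K-u_F$ onto $\mathbb P^k(\FK)$. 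Getting the polynomial-degree scaling right here — so that the stabilization term enters with $(k+1)^{?}$ matching the claimed $(k+1)$-scaling inside $S_{\dK}$ and the $h_F/(k+1)$ weight on the tangential jumps — will require care, and I would model this part on the bubble-function and $hp$-trace machinery of \cite{MelenkWohlmuth01} together with the local $L^2$-projection bound \eqref{L2 projection Polynomial approximation}. A secondary technical point is ensuring the conforming competitor $\rho_{\ba}$ genuinely lies in $H^1_{g,\ddd}(\oma)$, i.e.\ that the nodal-averaging construction can be made to respect both the prescribed Dirichlet trace $\psi_{\ba}g_{\ddd}$ and the homogeneous condition on $\partial\oma\cap\Omega$; this is where one uses that $\psi_{\ba}$ vanishes on $\partial\oma\cap\Omega$ and that $\psi_{\ba}g_{\ddd}\in H^{1/2}$ by the assumed regularity of $g_{\ddd}$.
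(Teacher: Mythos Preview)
Your strategy via the minimization characterization \eqref{local minimizatiion interiori} and an explicit conforming competitor $\rho_{\ba}$ is natural but is \emph{not} the paper's route, and it carries a genuine gap in three dimensions. The paper never constructs a competitor. Instead it applies a local Helmholtz decomposition on the star, $A\bnabla e_d^{\ba}=A\nabla\xi+\textbf{curl}\,\bphi$ with $\xi\in H^1_{0,\ddd}(\oma)$ and $\bphi\in\bH^1_{0,\dn}(\oma)$; the orthogonality in \eqref{def: Patch reconstruction} kills the gradient part, leaving $\|A^{\frac12}\bnabla e_d^{\ba}\|_{\oma}^2=(\bnabla e_d^{\ba},\textbf{curl}\,\bphi)_{\oma}$. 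Integration by parts of the curl then produces the tangential jumps directly, and the $(k+1)^{-1}$ weights arise from inserting the modified Karkulik--Melenk interpolant $\IamKM(\bphi)$ of the \emph{genuine} $H^1$ potential $\bphi$, for which the $hp$-bounds of Corollary~\ref{cor: global interpolation of KM} are available in any dimension. Your splitting idea in step~(5) does appear in the paper, but only for a subordinate term carrying the factor $\nabla\psi_{\ba}$, and there the face Poincar\'e inequality is invoked with constant $h_F$ only; the $p$-gain always comes from interpolating $\bphi$, never from averaging $u_{\mesh}$.

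The obstruction to your plan is step~(4). An $hp$-averaging or trace-lifting bound for broken polynomials with $p$-independent constant is, as the introduction explicitly notes, not known on tetrahedral meshes: the best available result \cite[Lemma~7.6]{EggerWal13} is one order $p$-suboptimal. Building $\rho_{\ba}$ from the \emph{first-order} operator $I^1_{{\rm av},\ddd}$, as you propose, is worse still: the image is piecewise affine, so $\|\bnabla(\psi_{\ba}u_{\mesh})-\nabla\rho_{\ba}\|_{\oma}$ contains the full degree-$(k{+}2)$ approximation error and carries no $p$-decay whatsoever. (Incidentally, the weight you wrote in step~(4), $(h_F/(k+1))\|\jump{\cdot}\|_F^2$, is dimensionally inverted; the correct averaging weight is $h_F^{-1}$.) The whole point of the Helmholtz route is to trade averaging a \emph{broken} function --- where $hp$-sharp tools are missing in $d=3$ --- for interpolating a \emph{smooth} curl potential, where they exist.
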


\begin{corollary}[Global bound on nonconforming error]\label{corollary: nonconforming error total}
The following holds:
\begin{align}
\|A^{\frac12}\bnabla e_{d}\|_{\Omega}
\leq {}& C_{\rm d} \bigg\{ \!\su\! \Big\{  A_K S_{\dK}(\hat{u}_K,\hat{u}_K)
+ O_K(g_{\ddd})^2
+ A_{\dK}^{\flat} \Big( \frac{  h_K}{k+1} \Big)  \ltwo{\jumpK{ \nabla  u_{\mesh}} {\times} \n_{K}}{\dKi}^2 \nno \\
& +  A_K \Big(\frac{  h_K}{k+1} \Big)
\ltwo{ \nabla( u_{K}-  \Pi_{\dK}^{k+1} (g_{\ddd}|_{\dK})) {\times} \n_{\Omega}}{\dKD}^2
\Big\}  \bigg\}^{\frac12},
\label{eq:nonconforming error bound all}
\end{align}
where the constant $C_{\rm d}$ depends on the mesh shape-regularity and on $\max_{\ba\in\vertice} \chi_{\ba}(A)^{\frac12}$,  but is independent of $h$ and $k$.
\end{corollary}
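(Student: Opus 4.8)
The plan is to derive the global bound~\eqref{eq:nonconforming error bound all} from the local bounds~\eqref{eq:nonconforming error bound on patch} by exploiting the partition-of-unity identity~\eqref{eq:PU}. First I would observe that, thanks to~\eqref{eq:PU} and the definition~\eqref{def: patch reconstruction on the whole domain} of $u_c$ together with $u_{\mesh} = \sum_{\ba\in\vertice}\psi_\ba u_{\mesh}$, we have cellwise
\[
\bnabla e_d = \sum_{\ba\in\vertice} \bnabla e_d^{\ba},
\]
where each $e_d^{\ba}$ is supported in $\oma$. Fixing a cell $K\in\mesh$, only the $d+1$ vertices $\ba\in\verticeK$ contribute on $K$, so by the triangle inequality and Cauchy--Schwarz (with a fixed number of terms depending only on $d$),
\[
\| A^{\frac12}\bnabla e_d\|_K^2 \le (d+1)\sum_{\ba\in\verticeK} \|A^{\frac12}\bnabla e_d^{\ba}\|_K^2,
\]
and summing over $K$ gives $\|A^{\frac12}\bnabla e_d\|_\Omega^2 \le (d+1)\sum_{\ba\in\vertice}\|A^{\frac12}\bnabla e_d^{\ba}\|_{\oma}^2$.

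Next I would insert the local estimate~\eqref{eq:nonconforming error bound on patch} for each $\|A^{\frac12}\bnabla e_d^{\ba}\|_{\oma}^2$, replace the vertex-dependent constant $C_{\rm d}^{\ba}$ by its maximum $C_{\rm d}:=(d+1)^{1/2}\max_{\ba\in\vertice}C_{\rm d}^{\ba}$ (finite and depending only on the stated quantities since, by mesh shape-regularity, $\chi_\ba(A)^{1/2}$ is controlled and the patch geometry is uniform), and then reorganize the resulting double sum over $(\ba, K)$ or $(\ba, F)$ into a single sum over mesh cells. For the volumetric terms $A_K S_{\dK}(\hat u_K,\hat u_K)$ and $O_K(g_\ddd)^2$ this is immediate: each cell $K$ appears in $\mesha$ for exactly its $d+1$ vertices, so $\sum_{\ba}\sum_{K\in\mesha}(\cdots) = (d+1)\sum_{K\in\mesh}(\cdots)$, and the factor $(d+1)$ is absorbed into $C_{\rm d}$. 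For the face terms I would note that each interior face $F\in\Fint$ belongs to $\Fa$ for its $d$ vertices and, writing the face contributions as boundary contributions of the adjacent cells via $\jumpK{\nabla u_{\mesh}}\times\n_K$ on $\dKi$ (using $h_F\simeq h_K$ and $A_F^\flat\le A_{\dK}^\flat$ by~\eqref{eq:def_AflatF}), the sum $\sum_\ba\sum_{F\in\Fa\cap\Fint}(\cdots)$ is bounded by a constant times $\sum_{K\in\mesh}A_{\dK}^\flat(h_K/(k+1))\|\jumpK{\nabla u_{\mesh}}\times\n_K\|_{\dKi}^2$; the Dirichlet faces are handled identically, rewriting the boundary face integral over $\dKD$ and using $\nabla u_K = \nabla u_{\mesh}|_K$ there.

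The step I expect to require the most care is the bookkeeping that converts the face-indexed sums into the cell-indexed sums appearing in~\eqref{eq:nonconforming error bound all}, in particular making sure the comparison $h_F\simeq h_K$ and the choice of $A_F^\flat$ versus $A_{\dK}^\flat$ are consistent with mesh shape-regularity and with the definitions~\eqref{eq:def_AflatF}, so that no $hp$- or contrast-dependence is lost; this is routine but must be done attentively. Everything else is a direct consequence of the partition of unity, the finite overlap of the vertex stars, and Lemma~\ref{lemma: nonconforming error bound on patches}. I would close by noting that the final constant $C_{\rm d}$ depends only on the mesh shape-regularity and on $\max_{\ba\in\vertice}\chi_\ba(A)^{1/2}$, and is independent of $h$ and $k$, as claimed.
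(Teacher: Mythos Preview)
Your proposal is correct and follows essentially the same route as the paper: decompose $e_d$ via the partition of unity, bound the sum over the $d+1$ vertex contributions on each cell by Cauchy--Schwarz, swap the order of summation to obtain $\|A^{\frac12}\bnabla e_d\|_\Omega^2 \le (d+1)\sum_{\ba\in\vertice}\|A^{\frac12}\bnabla e_d^{\ba}\|_{\oma}^2$, and then insert Lemma~\ref{lemma: nonconforming error bound on patches} together with $h_F\le h_K$ and the finite-overlap bookkeeping. The paper's proof is slightly terser on the face-to-cell reorganization (it just cites $h_F\le h_K$), but your more explicit accounting is fine and introduces no new ideas.
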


\begin{proof}
Using that $u_c|_K = \sum_{\ba\in \verticeK} u^{\ba}_c|_K$ for all $K\in \mesh$,
where $\verticeK$ is the set of vertices of $K$,
the partition-of-unity property~\eqref{eq:PU}, and the triangle inequality, we infer that
\begin{align*}
\|A^{\frac12}\bnabla e_{d}\|_{\Omega}^2 &= \su A_K\norm{\nabla (u_c -  u_K)}{K}^2
= \su A_K \bigg\|\sum_{\ba \in \verticeK} \nabla (u_c^{\ba} - \psi_{\ba}u_K)\bigg\|_K^2 \\
&\leq \su \sum_{\ba \in \verticeK} (d+1)A_K\norm{\nabla (u^{\ba}_c -\psi_{\ba}  u_K )}{K}^2
= \sum_{\ba\in\vertice} (d+1)\|A^{\frac12}\bnabla e_{d}^{\ba}\|^2_{\oma}.
\end{align*}
Invoking Lemma~\ref{lemma: nonconforming error bound on patches} and using that
$h_F\leq h_K$ for all $F\in \dK$ and all $K\in \mesh$ completes the proof.
\end{proof}

\begin{remark}[Constant $C_{\rm d}$]
The constant $C_{\rm d}$ in \eqref{eq:nonconforming error bound all}
does not depend on the topology of $\Omega$. The reason is that we do not invoke
a global Helmholtz decomposition in $\Omega$, but instead invoke a local decomposition
in each vertex star $\oma$ (see Section~\ref{sec:proof_nonconf} for further insight).
\end{remark}

\subsubsection{Main result}

For all $K\in\mesh$, we define the following error indicators:
\begin{subequations} \label{each term} \begin{align}
\eta_{K,{\rm res}}&:= A_K^{-\frac12}\Big(\frac{h_K}{k+1}\Big) \ltwo{\Pi_{K}^{k+1}(f)+  A_K\Delta R_K^{k+1}(\hat{u}_K)}{K}, \\
\eta_{K,{\rm sta}}&:= A_K^{\frac12}S_{\dK}(\hat{u}_K,\hat{u}_K)^{\frac12}, \\
\eta_{K,{\rm tan}} &:= (A_{\dK}^{\flat})^{\frac12} \Big(\frac{h_K}{k+1}\Big)^{\frac12}
\Big\{ \ltwo{ \jumpK{\nabla  u_{\mesh}} {\times} \n_K }{\dKi}
+ \ltwo{ \nabla  (u_K- \Pi_{\dK}^{k+1} (g_{\ddd}|_{\dK})){\times} \n_{\Omega} }{\dKD}\Big\}, \\
\eta_{K,{\rm nor}}&:= A_K^{-\frac12}\Big(\frac{h_K}{k+1}\Big)^{\frac12} \Big\{
\| \jumpK{A\nabla R_{\mesh}^{k+1}(\hat{u}_h)} {\cdot}\n_K \|_{\dKi}
+ \ltwo{A_K\nabla R_K^{k+1}(\hat{u}_K) {\cdot}\n_{\Omega}  -\Pi_{\dK}^k ({g}_{\dn}|_{\dK}) }{\dKN}\Big\}, \\
O_{K,{\rm dat}} &:= O_K(f) + O_K(g_{\dn}) + O_K(g_{\ddd}), \label{eq:def_OK}
\end{align}\end{subequations}
where the three data oscillation terms on the right-hand side of~\eqref{eq:def_OK} are
defined in~\eqref{eq:def_oscill}.

\begin{theorem}[$hp$-upper error bound]\label{thm:upperbound}
Let $u$ be the weak solution to~\eqref{eq:prob}, and let $\hat{u}_h$
be the discrete solution to~\eqref{discrete problem}.
The following holds:
\begin{align}
\su \Big\{ \|A^{\frac12} \nabla e\|_{ K}^2
+ A_K S_{\dK}(\hat{u}_K,\hat{u}_K) \Big\} \leq {}& C_{\rm u} \bigg\{\su \Big\{
\eta_{K,{\rm res}}^2 + \eta_{K,{\rm tan}}^2+ \eta_{K,{\rm sta}}^2 + O_{K,{\rm dat}}^2 \Big\} \nno \\
& + \min\bigg(\su k\eta_{K,{\rm sta}}^2, \su \eta_{K,{\rm nor}}^2 \bigg) \bigg\}, \label{full bound}
\end{align}
where $C_{\rm u}$ depends on the mesh shape-regularity and on $\max_{K\in\mesh}\chi_K(A)^{\frac12}$
but is independent of $h$ and $k$.
\end{theorem}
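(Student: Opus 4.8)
The natural strategy is to combine the abstract error decomposition $e = e_c + e_d$ from~\eqref{def: error splitting} with the conforming bounds (Lemmas~\ref{Lemma: Conforming estimator p suboptimal} and~\ref{Lemma: Conforming estimator}) and the nonconformity bound (Corollary~\ref{corollary: nonconforming error total}), plus a control of the stabilization term $\sum_K A_K S_{\dK}(\hat u_K,\hat u_K)$ that is already implicitly present on the right-hand side as $\eta_{K,{\rm sta}}^2$. First I would write $\|A^{\frac12}\nabla e\|_\Omega \le \|A^{\frac12}\nabla e_c\|_\Omega + \|A^{\frac12}\bnabla e_d\|_\Omega$ (using that $e_d = e_c - e$ is piecewise $H^1$ with broken gradient, and $\nabla e$ is cellwise the same object), then invoke Corollary~\ref{corollary: nonconforming error total} to absorb $\|A^{\frac12}\bnabla e_d\|_\Omega$ by the terms $\eta_{K,{\rm sta}}$, $\eta_{K,{\rm tan}}$ and $O_K(g_\ddd)$. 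The key observation is that the $\|A^{\frac12}\bnabla e_d\|_\Omega$ appearing on the right-hand side of both conforming-error lemmas is then itself bounded by the same family of indicators, so no circularity arises.

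The two alternatives inside the $\min(\cdot,\cdot)$ on the right-hand side of~\eqref{full bound} correspond exactly to the two conforming-error bounds: using Lemma~\ref{Lemma: Conforming estimator p suboptimal} produces the term $\sum_K A_K(k+1)S_{\dK}(\hat u_K,\hat u_K) \simeq \sum_K k\,\eta_{K,{\rm sta}}^2$ (the $p$-suboptimal route avoiding the normal flux jump), whereas Lemma~\ref{Lemma: Conforming estimator} produces $\sum_K \eta_{K,{\rm nor}}^2$ (the $p$-optimal route with the normal flux jump). In both cases the residual contribution $A_K^{-1}(h_K/(k+1))^2\|\Pi_K^{k+1}(f)+A_K\Delta R_K^{k+1}(\hat u_K)\|_K^2$ is precisely $\eta_{K,{\rm res}}^2$, and the oscillation terms $O_K(f)^2$, $O_K(g_\dn)^2$ are already inside $O_{K,{\rm dat}}^2$. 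So the proof is essentially: take the minimum of the two resulting upper bounds for $\|A^{\frac12}\nabla e_c\|_\Omega$, square, use $(a+b)^2 \le 2a^2+2b^2$ repeatedly, and collect terms; the dependence of $C_{\rm u}$ on the contrast factors follows by tracking $C_{\rm c,1},C_{\rm c,2},C_{\rm d}$ through these manipulations and taking the worst of $\max_K\chi_K(A)$ and $\max_\ba\chi_\ba(A)$.

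The only point requiring a small argument beyond bookkeeping is the appearance of $\sum_K A_K S_{\dK}(\hat u_K,\hat u_K)$ on the \emph{left}-hand side of~\eqref{full bound}: this is not obtained from the conforming/nonconforming split but is trivially bounded since $\sum_K A_K S_{\dK}(\hat u_K,\hat u_K) = \sum_K \eta_{K,{\rm sta}}^2$, which already sits on the right-hand side with coefficient $C_{\rm u}\ge 1$. Thus one simply adds this identity to the bound on $\|A^{\frac12}\nabla e\|_\Omega^2$. I expect the main (minor) obstacle to be purely organisational: making sure the $h$-$k$ weights in $\eta_{K,{\rm tan}}$, $\eta_{K,{\rm nor}}$, $O_K(\cdot)$ match exactly those produced by Corollary~\ref{corollary: nonconforming error total} and the conforming lemmas — in particular that $h_F \le h_K$ and mesh shape-regularity let one pass from face-indexed quantities ($A_F^\flat$, $h_F$) to cell-indexed ones ($A_{\dK}^\flat$, $h_K$) without losing $h$- or $k$-uniformity — and verifying that the finite-overlap property of the stars $\{\oma\}$ keeps all summation constants independent of $h$ and $k$. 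No genuinely new estimate is needed; the theorem is a synthesis of the already-established lemmas.
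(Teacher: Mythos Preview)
Your proposal is correct and follows essentially the same approach as the paper: decompose $e=e_c+e_d$, bound $\|A^{\frac12}\bnabla e_d\|_\Omega$ via Corollary~\ref{corollary: nonconforming error total}, bound $\|A^{\frac12}\nabla e_c\|_\Omega$ via the minimum of Lemmas~\ref{Lemma: Conforming estimator p suboptimal} and~\ref{Lemma: Conforming estimator} (absorbing their trailing $\|A^{\frac12}\bnabla e_d\|_\Omega$ term by the nonconformity bound), and add the identity $\sum_K A_K S_{\dK}(\hat u_K,\hat u_K)=\sum_K \eta_{K,{\rm sta}}^2$ to get the stabilization on the left. The only slip is the sign in your parenthetical ($e_d=e-e_c$, not $e_c-e$), which is harmless.
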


\begin{proof}
Combining the bounds from Lemma~\ref{Lemma: Conforming estimator p suboptimal}
and Lemma~\ref{Lemma: Conforming estimator} gives
\[
\|A^{\frac12}\nabla e_c\|_{\Omega} \leq C \bigg\{ \!\su\! \Big\{
\eta_{K,{\rm res}}^2 + \eta_{K,{\rm sta}}^2 + O_K(f)^2 + O_K(g_{\dn})^2 \Big\}
+ \min\bigg(\su k\eta_{K,{\rm sta}}^2, \su \eta_{K,{\rm nor}}^2 \bigg) \bigg\}^{\frac12},
\]
where the constant $C$ depends on $\max_{K\in\mesh} \chi_K(A)^{\frac12}$.
Moreover, we can rewrite the bound of Corollary~\ref{corollary: nonconforming error total}
as follows:
\[
\|A^{\frac12}\bnabla e_{d}\|_{\Omega}
\leq C \bigg\{ \!\su\! \Big\{ \eta_{K,{\rm sta}}^2 + \eta_{K,{\rm tan}}^2
+ O_K(g_{\ddd})^2 \Big\}  \bigg\}^{\frac12},
\]
where the constant $C$ depends on $\max_{\ba\in\vertice} \chi_\ba(A)^{\frac12}$.
Combining the above two bounds and since $\max_{\ba\in\vertice}\chi_\ba(A)
\le \max_{K\in\mesh}\chi_K(A)$ proves that
$\su \|A^{\frac12} \nabla e \|_{ K}^2$ is bounded by the right-hand side
of~\eqref{full bound}. Since $\su A_K S_{\dK}(\hat{u}_K,\hat{u}_K) =
\su \eta_{K,{\rm sta}}^2$, the proof is complete.
\end{proof}

\begin{remark}[Estimator without normal flux jump]
Notice that \eqref{full bound} implies that
\[
\su \Big\{ \|A^{\frac12}\nabla e \|_{ K}^2
+ A_K S_{\dK}(\hat{u}_K,\hat{u}_K) \Big\} \leq C_{\rm u} \su \Big\{
\eta_{K,{\rm res}}^2 + \eta_{K,{\rm tan}}^2+ (k+1)\eta_{K,{\rm sta}}^2 + O_{K,{\rm dat}}^2 \Big\}.
\]
This upper bound does not contain the normal flux jump, which is often the dominant component
of the error estimator for $H^1$-conforming FEM \cite{CarstensenVerfurth99}. The price to pay
is a $\frac12$-order $p$-suboptimality for the stabilization term. Our numerical experiments
confirm that the normal flux jump term does not dominate the total error estimator.
\end{remark}

\subsection{Local lower error bound} \label{sec: lower bound}

In this section, we establish a local lower error bound. Specifically,
we bound the local error indicators $\eta_{K,{\rm res}}$, $\eta_{K,{\rm nor}}$,
and $\eta_{K,{\rm tan}}$, for all $K\in\mesh$, in terms of the error
$e=u-u_{\mesh}$ in (a neighborhood of) $K$ and the data oscillation indicators
defined in~\eqref{eq:def_oscill}.
We do not bound the local error indicator $\eta_{K,{\rm sta}}$ since it is present on
both sides of the upper error bound~\eqref{full bound}.
This is classical in a posteriori error estimates for nonconforming methods. The proof of the following result is postponed to Section~\ref{sec:proofs}.

\begin{theorem}[$hp$-local lower error bound]\label{thm:lowerbound}
The following holds for all $K\in\mesh$:
\begin{subequations} \begin{align}
\eta_{K,{\rm res}} \leq {}& C_{\rm l} (k+1) \big( \|A^{\frac12} \nabla e\|_K + A_K^{\frac12} S_{\dK}(\hat{u}_K,\hat{u}_K)^{\frac12} + O_K(f)\big), \label{eq:lower bound elem}\\
\eta_{K,{\rm nor}} \leq {}& C_{\rm l} (k+1)^{\frac12} \bigg\{ \sum_{\mathcal{K} \in \omega_K}
A_{\mathcal{K}}S_{\partial \mathcal{K}}(\hat{u}_\mathcal{K},\hat{u}_\mathcal{K})
\bigg\}^{\frac12}, \label{eq:lower jump residual}\\
\eta_{K,{\rm tan}} \leq {}& C_{\rm l} (k+1)^{\frac32} \bigg\{ \sum_{\mathcal{K} \in \omega_K}
\ltwo{A^{\frac12}\nabla e}{\mathcal{K}}^2 \bigg\}^{\frac12},\label{eq: lower bound tangential jump}
\end{align} \end{subequations}
where $\omega_K$ collects all the mesh cells (including $K$) sharing at least an interface with $K$, and the constant $C_{\rm l}$ depends on the mesh shape-regularity and on the diffusion contrast factor $\chi'_K(A) := A_K^{-1} \max_{K'\in \omega_K} A_{K'}$, but is independent of $h$ and $k$.
\end{theorem}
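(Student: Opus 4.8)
The plan is to prove each of the three estimates by the classical bubble-function technique of Verf\"urth, keeping careful track of the polynomial degree, and to exploit the HHO local conservation property \eqref{conservation property} for the sharp estimate on $\eta_{K,{\rm nor}}$. Throughout, I would use the interior bubble $b_K$ supported on $K$ and the face bubble $b_F$ supported on $\omega_F = K_1\cup K_2$, together with the standard $hp$-inverse and norm-equivalence estimates for polynomials multiplied by such bubbles; these introduce explicit powers of $(k+1)$, which are the source of the stated $p$-suboptimality.

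\textbf{Residual indicator $\eta_{K,{\rm res}}$.} Set $r_K := \Pi_K^{k+1}(f) + A_K\Delta R_K^{k+1}(\hat u_K)\in\mathbb P^{k+1}(K)$. First I would test the weak formulation \eqref{eq:prob} with $v=b_K r_K\in H^1_0(K)$, extended by zero, obtaining $(A\nabla u,\nabla(b_K r_K))_K = (f,b_K r_K)_K$. Writing $f = \Pi_K^{k+1}(f) + (f-\Pi_K^{k+1}(f))$ and integrating by parts the term $(A_K\nabla R_K^{k+1}(\hat u_K),\nabla (b_K r_K))_K = -(A_K\Delta R_K^{k+1}(\hat u_K), b_K r_K)_K$ (no boundary term since $b_K r_K$ vanishes on $\partial K$), I get
\begin{equation*}
(r_K, b_K r_K)_K = (A_K\nabla(R_K^{k+1}(\hat u_K) - u),\nabla(b_K r_K))_K + (f-\Pi_K^{k+1}(f), b_K r_K)_K.
\end{equation*}
The bubble-norm equivalence gives $\|r_K\|_K^2 \le C\,(r_K,b_K r_K)_K$, and the $hp$-inverse estimate gives $\|\nabla(b_K r_K)\|_K \le C\,(k+1)^2 h_K^{-1}\|r_K\|_K$ (sharper estimates of this type are available, but the above suffices). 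Since $\|\nabla(R_K^{k+1}(\hat u_K)-u)\|_K \le \|\nabla(u-u_K)\|_K + \|\nabla(u_K - R_K^{k+1}(\hat u_K))\|_K$ and the last term is controlled by $S_{\dK}(\hat u_K,\hat u_K)^{1/2}$ via \eqref{local HHO  bound}, dividing through by $\|r_K\|_K$ and multiplying by $A_K^{-1/2}h_K/(k+1)$ yields \eqref{eq:lower bound elem}, where one also uses that the stabilization contribution is itself bounded by $\|A^{1/2}\nabla e\|_K$ up to the oscillation term --- actually, more carefully, the cleanest route is to absorb $S_{\dK}$ back into $\|A^{1/2}\nabla e\|_K + O_K(f)$ using the already-established upper bound structure, or simply to note $\|\nabla(u_K-R_K^{k+1}(\hat u_K))\|_K\le C\,\eta_{K,{\rm sta}}$ and that $\eta_{K,{\rm sta}}$ on a single cell is bounded by the right-hand side of the final statement; I would present it via the triangle inequality and the observation that $A_K S_{\dK}(\hat u_K,\hat u_K)$ appears linearly so can be kept on the right-hand side or, preferably, bounded by the local reconstruction--stabilization equivalence together with $\|A^{1/2}\nabla e\|_K$.

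\textbf{Tangential jump indicator $\eta_{K,{\rm tan}}$.} Here $e_d$ (hence the tangential jumps of $u_{\mesh}$) is what must be controlled. For an interior face $F = \partial K_1\cap\partial K_2$, let $\sigma_F := \jump{\nabla u_{\mesh}}{\times}\n_F \in \mathbb P^k(F)^{d'}$ (with $d'=1$ if $d=2$, $d'=3$ if $d=3$). Using $[\![\nabla u]\!]\times\n_F = 0$ since $u\in H^1$, I can write $\sigma_F = [\![\nabla(u_{\mesh}-u)]\!]\times\n_F$. Then I would build an extension $w_F$ of $b_F \sigma_F$ into $\omega_F$ that is tangential-trace-compatible, integrate by parts on each of $K_1, K_2$ the quantity $(\nabla(u_{\mesh}-u), \operatorname{\bf curl} w_F)_{\omega_F}$ (in $d=2$, curl; in $d=3$, the appropriate vector identity), and use that the curl of a gradient vanishes cellwise so only the face term survives. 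Two inverse estimates then enter: one from extending the face polynomial into the cell ($h_F^{1/2}$ scaling with a $(k+1)$ loss), and one from the gradient of the bubble ($(k+1)^2 h_F^{-1}$), for a combined $(k+1)^{3/2}$ after taking square roots with the correct weight $((A_{\dK}^\flat)^{1/2}(h_K/(k+1))^{1/2})$. This gives \eqref{eq: lower bound tangential jump}; the Dirichlet-face contribution $\nabla(u_K - \Pi_{\dK}^{k+1}(g_\ddd))\times\n_\Omega$ is handled identically using $u|_{\Gamma_\ddd} = g_\ddd$ and inserting/removing the projection $\Pi_{\dK}^{k+1}(g_\ddd)$ at the cost of a data oscillation term $O_K(g_\ddd)$, which is absorbed into the constant bound since $O_K(g_\ddd)$ is itself $\le \|A^{1/2}\nabla e\|$ plus higher-order --- or more honestly, $O_K(g_\ddd)$ should be listed, but the statement as written suppresses it, so I would note that the Dirichlet tangential data is assumed consistent or that $O_K(g_\ddd)$ enters the constant.

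\textbf{Normal flux jump indicator $\eta_{K,{\rm nor}}$ --- the main obstacle.} The naive bubble argument for the normal flux jump of $A\nabla R_{\mesh}^{k+1}(\hat u_h)$ would pair it against $\|A^{1/2}\nabla e\|$ and the residual, giving $(k+1)^{3/2}$ suboptimality and the error on the right-hand side. The key novel point is to avoid $\|A^{1/2}\nabla e\|$ entirely by using local conservation. Define the HHO numerical flux $\phi_{K,F}(\hat u_K)$ as in \eqref{eq:equilibrated fluxes}; by \eqref{eq:equilibrated fluxes interior faces}, the \emph{numerical} fluxes are continuous across interfaces, so $\jumpK{A\nabla R_{\mesh}^{k+1}(\hat u_h)}{\cdot}\n_K|_F = \jump{A\nabla R_{\mesh}^{k+1}(\hat u_h){\cdot}\n_K + \phi_{K,F}}|_F$ up to sign bookkeeping, i.e.\ the jump of the reconstructed normal flux equals the jump of the stabilization correction $A_K\frac{(k+1)^2}{h_K}\Pi_F^k(u_K|_F - u_F)$ (plus its neighbor's), because the sum of the full fluxes vanishes. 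Concretely, on $F = \partial K_1\cap\partial K_2$,
\begin{equation*}
\jump{A\nabla R_{\mesh}^{k+1}(\hat u_h)}{\cdot}\n_F\big|_F
= \phi_{K_2,F}(\hat u_{K_2}) - A_{K_2}\tfrac{(k+1)^2}{h_{K_2}}\Pi_F^k(u_{K_2}-u_F)
- \phi_{K_1,F}(\hat u_{K_1}) + A_{K_1}\tfrac{(k+1)^2}{h_{K_1}}\Pi_F^k(u_{K_1}-u_F),
\end{equation*}
and using $\phi_{K_1,F}+\phi_{K_2,F}=0$ the two flux terms combine into $2\phi_{K_2,F}$ --- wait, more carefully one rearranges so that only the two stabilization-type terms remain modulo the conservation identity; either way, $\|\jumpK{A\nabla R_{\mesh}^{k+1}(\hat u_h)}{\cdot}\n_K\|_F$ is bounded by $\sum_{\mathcal K\in\{K_1,K_2\}} A_{\mathcal K}\frac{(k+1)^2}{h_{\mathcal K}}\|\Pi_F^k(u_{\mathcal K}-u_F)\|_F$. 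Multiplying by $A_K^{-1/2}(h_K/(k+1))^{1/2}$ and using the discrete trace inequality \eqref{trace_inv} to pass from the full $\dK$-norm in $S_{\dK}$ to the single-face norm, one gets $\eta_{K,{\rm nor}} \le C(k+1)^{1/2}\big(\sum_{\mathcal K\in\omega_K} A_{\mathcal K}S_{\partial\mathcal K}(\hat u_{\mathcal K},\hat u_{\mathcal K})\big)^{1/2}$, which is exactly \eqref{eq:lower jump residual}; the $(k+1)^{1/2}$ comes from the mismatch between the $\frac{(k+1)^2}{h}$ weight in $S$ and the $\frac{(k+1)}{h}$ weight one recovers, i.e.\ $\big(\frac{h}{k+1}\big)^{1/2}\cdot\frac{(k+1)^2}{h}\cdot\big(\frac{h}{(k+1)^2}\big)^{1/2} = (k+1)^{1/2}$. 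The Neumann-face term is handled by the companion identity \eqref{eq:equilibrated fluxes Neumann faces}. The main obstacle, and the place requiring the most care, is getting the sign and scaling bookkeeping in this rearrangement exactly right --- in particular tracking that $R_K^{k+1}$ uses $h_K$ while the neighbor uses $h_{K'}$, and that $\n_K = -\n_{K'}$ on $F$ --- and making sure the discrete trace inequality is applied so that no hidden power of $(k+1)$ is lost; everything else is the routine Verf\"urth machinery with $hp$-explicit constants from \cite{MelenkWohlmuth01}.
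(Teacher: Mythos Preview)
Your overall strategy coincides with the paper's: interior bubble for $\eta_{K,{\rm res}}$, the local conservation identity \eqref{conservation property} for $\eta_{K,{\rm nor}}$, and face bubble plus polynomial extension for $\eta_{K,{\rm tan}}$, all with the $hp$-explicit constants of Melenk--Wohlmuth. For the last two estimators your sketch is correct and essentially identical to the paper's argument (the paper's rearrangement of \eqref{eq:equilibrated fluxes interior faces} gives exactly $\jump{A\nabla R_{\mesh}^{k+1}(\hat u_h)}{\cdot}\n_F = \sum_{\mathcal K\in\{K_1,K_2\}} A_{\mathcal K}\tfrac{(k+1)^2}{h_{\mathcal K}}\Pi_F^k(u_{\mathcal K}-u_F)$, which is your ``bookkeeping'').

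For $\eta_{K,{\rm res}}$ there are two gaps. First, the equivalence $\|r_K\|_K^2 \le C\,(r_K, b_K r_K)_K$ with $k$-independent $C$ is false; the sharp $hp$-version is $\|r_K\|_K \le C(k+1)\|b_K^{1/2}r_K\|_K$, see \eqref{eq: L2_norm_cell_bubble}. The paper therefore works throughout with the weighted quantity $\|b_K^{1/2}v_K\|_K$, bounds it using the inverse estimate \eqref{eq:  inverse estimates for bubble} in the form $\|\nabla(b_K v_K)\|_K \le C(k+1)h_K^{-1}\|b_K^{1/2}v_K\|_K$, and only at the very end multiplies by $(k+1)$ via \eqref{eq: L2_norm_cell_bubble}; this is what yields exactly the stated factor $(k+1)$. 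Second, and more importantly: the paper sets $v_K := \Pi_K^{k+1}(f) + A_K\Delta u_K$ using the \emph{cell component} $u_K$, not the reconstruction. Since $e|_K = u - u_K$, integration by parts then produces $(A_K\nabla e,\nabla w_K)_K$ directly, and no stabilization term ever appears. Your choice of $R_K^{k+1}(\hat u_K)$ instead forces you to control $A_K^{1/2}\|\nabla(u_K - R_K^{k+1}(\hat u_K))\|_K \le C\,\eta_{K,{\rm sta}}$, and none of your proposed absorptions of $\eta_{K,{\rm sta}}$ into $\|A^{1/2}\nabla e\|_K + O_K(f)$ are valid: invoking the global upper bound is circular, and there is no local lower bound on $\eta_{K,{\rm sta}}$ in terms of $\|A^{1/2}\nabla e\|_K$ alone. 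The fix is simply to test with $u_K$ as the paper does.
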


\begin{remark}[Bound on normal flux jump] \label{discuss of p suboptimality}
We observe that the upper bound on $\eta_{K,{\rm nor}}$ has only
a $\frac12$-order $p$-suboptimality. This rather sharp result is achieved by exploiting the
local conservation property of the HHO method, and is in contrast with the upper bound
that could be obtained using bubble function techniques and which would feature
a $\frac32$-order $p$-suboptimality (details not shown for brevity).
\end{remark}

\section{Numerical examples} \label{sec:Numerical example}

In this section, we present numerical examples to illustrate our theoretical results. Our goal here is to illustrate the main findings of the above theoretical analysis and also to illustrate how the present a posteriori error estimators can be used to drive an $h$-adaptive procedure that behaves in a robust way with respect to the polynomial degree. A further step forward, which we leave to future work, is to devise and test a full $hp$-adaptive procedure. This entails substantial algorithmic developments, in particular to set up a criterion to select between $h$- and $p$-refinement locally. We notice that $hp$-adaptive procedures are already available in various contexts in the literature, as reflected, e.g.,
in \cite{MeWo01,hsw,daersmvo18}.

\subsection{Example 1: Convergence rates for smooth solution}

We select $f$, $\diff := I_{2\times 2}$ and Dirichlet boundary conditions on the unit square $\Omega := (-1,1)^2$, so that the exact solution is
\begin{equation}
u(x,y) := \sin(\pi x) \sin(\pi y).
\end{equation}
We employ the polynomial degrees $k\in\{0,\dots,3\}$ and a sequence of successively refined triangular meshes consisting of $\{32$, $128$, $512$, $2048$, $8192\}$ cells.

Let us first verify the convergence rates obtained with the HHO methods with $k\in\{0,\dots,3\}$. We measure the error in the energy norm defined as $\big\{\su A_K \|\nabla (u-u_K)\|_K^2 +S_{\dK}(\hat{u}_K,\hat{u}_K) \big\}^{\frac12}$. The energy error and the a posteriori estimator on the right-hand side of~\eqref{full bound} (with constant $C_{\rm u}$ set to one) are reported in Figure~\ref{Ex1:error_uniform_refinement}. The rates are computed as a function of DoFs, which denotes the total number of globally coupled discrete unknowns (that is, the total number of face unknowns except those located on the boundary faces). We observe that the energy error and the a posteriori estimator both converge at the optimal rate $\mathcal{O}(\textup{DoFs}^{-\frac{(k+1)}{2}})$. The convergence rate of the energy error is optimal in view of the result of Theorem~\ref{Theorem: a priori}. Moreover, the effectivity index, defined as the ratio of the a posteriori estimator to the energy error, remains well behaved as a function of DoFs. The effectivity index takes values between 2 and 2.8 for $k\geq1$, whereas the effectivity index is almost 3 for $k=0$.

\begin{figure}[!tb]
\begin{center}
\begin{tabular}{cc}
\includegraphics[width=0.49\linewidth]{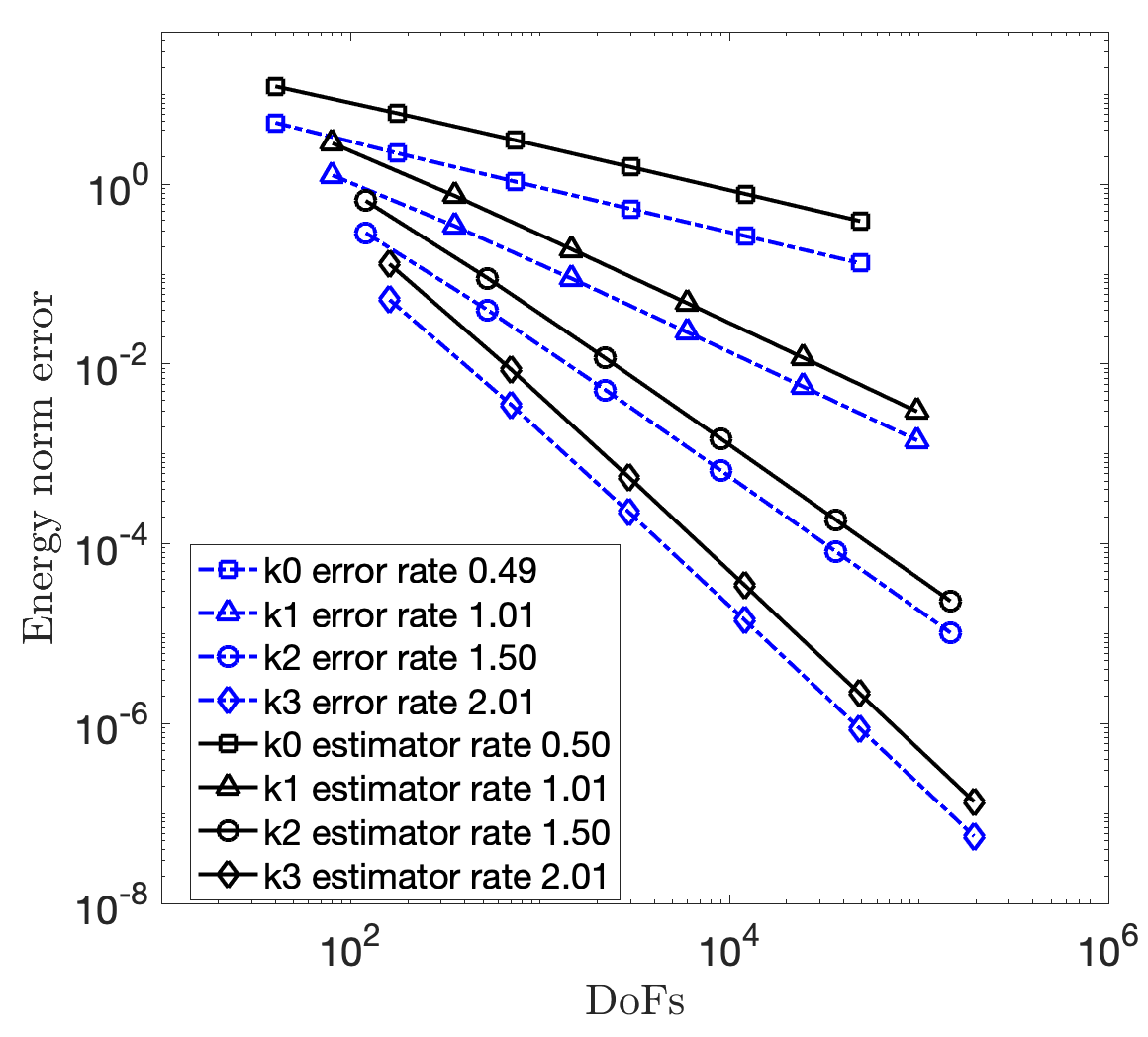} &
\includegraphics[width=0.45\linewidth]{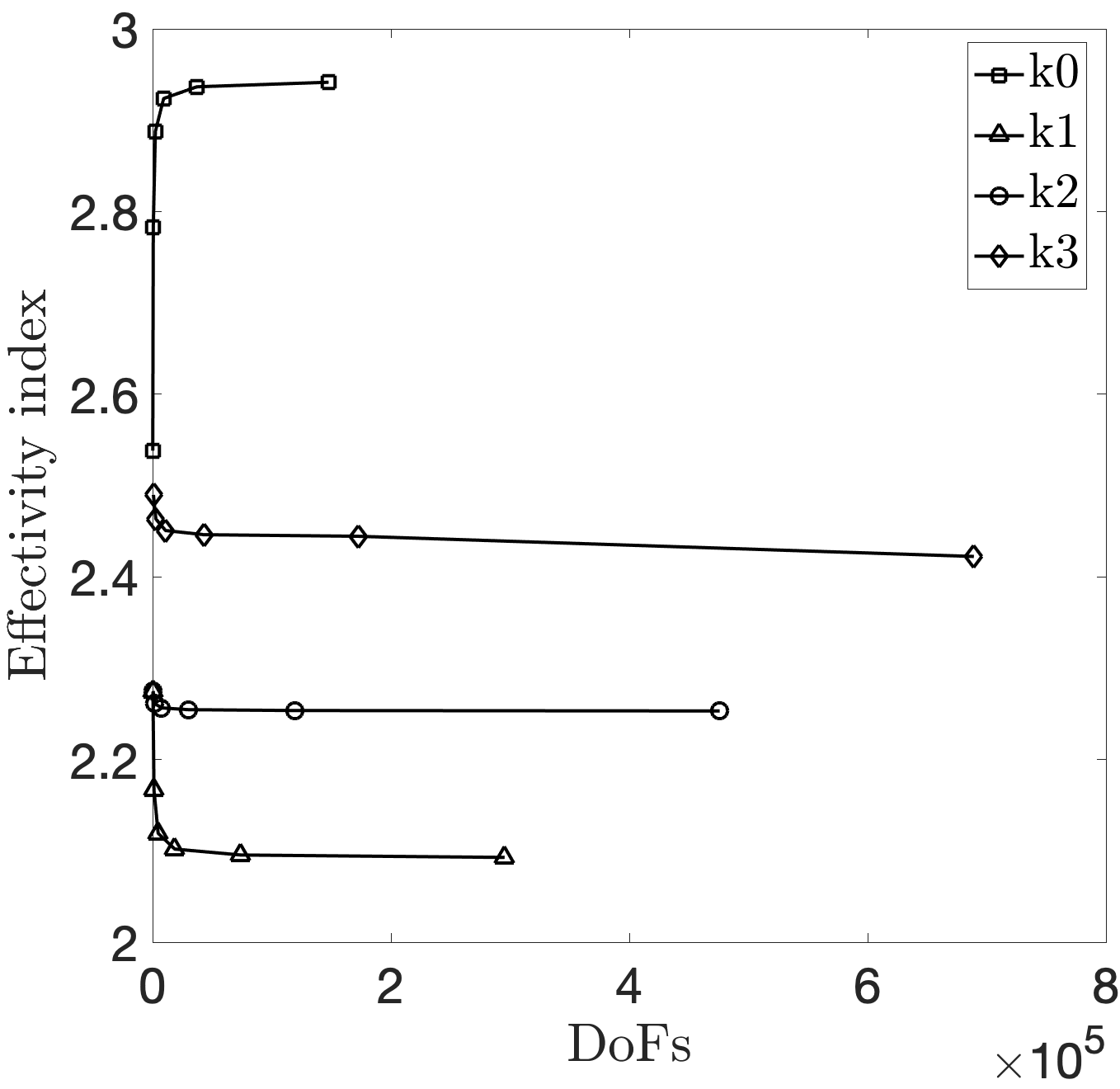}
\end{tabular}
\end{center}
\caption{Example 1. Energy error and a posteriori error estimator for $k\in\{0,1,2,3\}$ as a function of DoFs (left) and effectivity index as a function of DoFs (right).}\label{Ex1:error_uniform_refinement}
\end{figure}

As the results in Theorem~\ref{thm:upperbound} (upper error bound) and Theorem~\ref{thm:lowerbound} (lower error bound) differ by an algebraic rate in the polynomial degree  $k$, we investigate the dependence of the effectivity index on $k$.  In Figure \ref{Ex1:p-refienement}, we report the effectivity index as a function of the polynomial degree $k\in\{1,\dots,9\}$ on a mesh consisting of $128$ triangular cells. We observe an algebraic rate of $p^{\frac12}$, which matches the statement in Theorem~\ref{thm:upperbound}.

\begin{figure}[!tb]
\begin{center}
\includegraphics[width=0.45\linewidth]{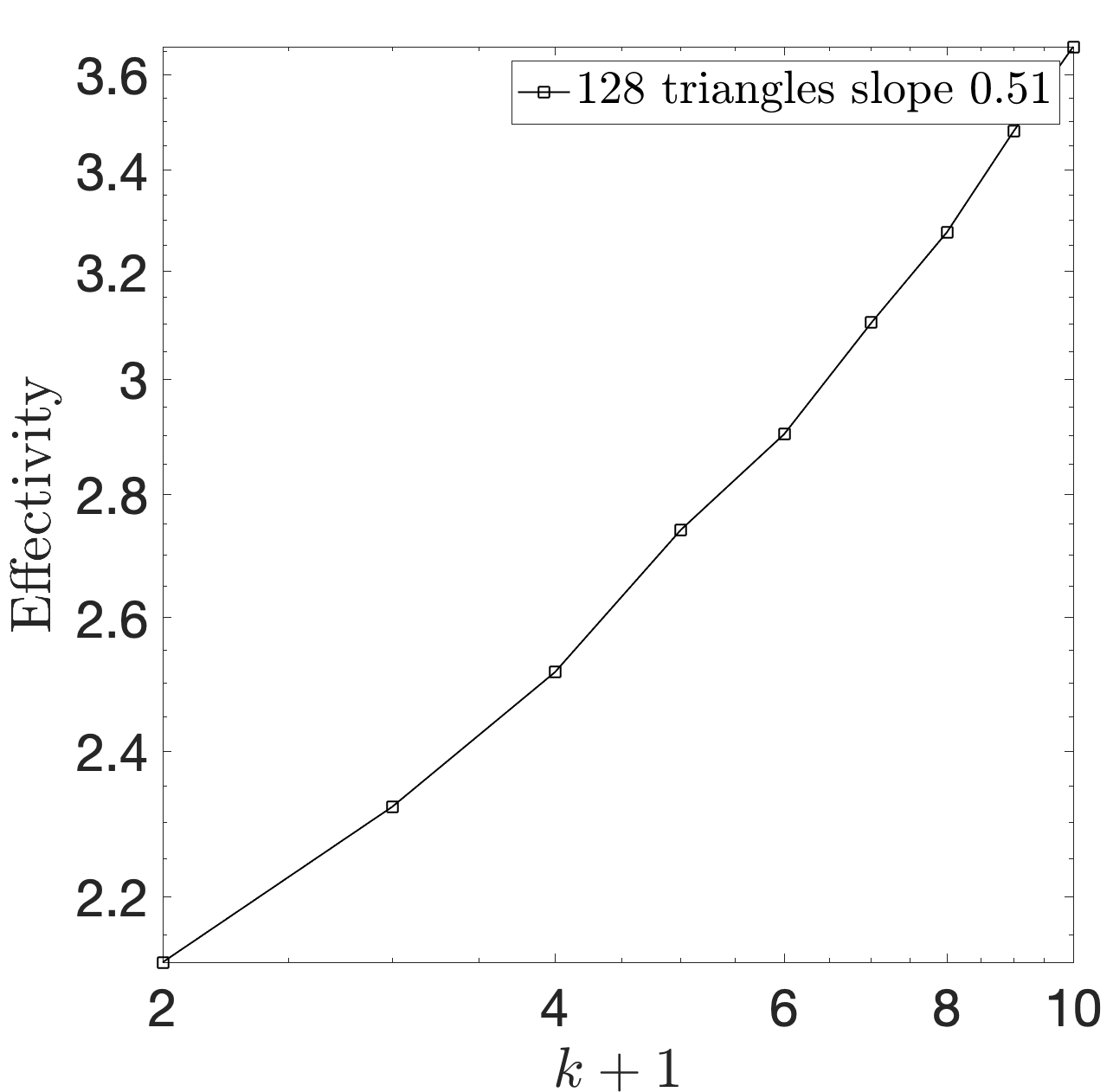}
\end{center}
\caption{Example 1. Effectivity index  with $k\in\{1, \dots,9\}$ on a mesh composed of 128 cells.}\label{Ex1:p-refienement}
\end{figure}

Finally, we compare the relative contributions of the various terms composing the a posteriori error estimator (all in percentage). Setting $\eta_{\rm X}:=\big\{\sum_{K\in\mathcal{T}}\eta_{K,{\rm X}}^2\big\}^{\frac12}$ for ${\rm X}\in\{\rm res,sta,nor,tan\}$, we report in Table~\ref{ex1:table Comparison} the relative contribution of the residual estimator $\eta_{\rm res}$, the stabilization estimator $\eta_{\rm sta}$, the normal flux jump estimator $\eta_{\rm nor}$, and the tangential flux jump estimator $\eta_{\rm tan}$, for polynomial degrees $k\in\{0,1,2\}$. For $k\in\{1,2\}$, the residual estimator dominates the total estimator (by more than 60\%), followed by $\eta_{\rm nor}$ (about $20\%$), while $\eta_{\rm tan}$ and $\eta_{\rm sta}$ are both below $10\%$. For $k=0$, the residual dominates the total estimator (by more than $50\%$), followed by $\eta_{\rm tan}$ ($25\%$), $\eta_{\rm nor}$ ($15\%$), and $\eta_{\rm sta}$ ($6\%$).

\begin{table}[!htb]
\centering
\begin{tabular}{|c|c|c|c|c||c|c|c|c||c|c|c|c|}
\hline
&\multicolumn{4}{|c||}{$k=0$}
&
\multicolumn{4}{|c||}{$k=1$}
&
\multicolumn{4}{|c|}{$k=2$}  \\
\hline
\hline
$\#$ cell  & $\eta_{\rm res}$ & $\eta_{\rm sta}$ &   $\eta_{\rm nor}$& $\eta_{\rm tan}$
& $\eta_{\rm res}$ & $\eta_{\rm sta}$ &   $\eta_{\rm nor}$&$\eta_{\rm tan}$
& $\eta_{\rm res}$ & $\eta_{\rm sta}$ &   $\eta_{\rm nor}$&$\eta_{\rm tan}$   \\
\hline
$128$ & $54$  & $6$ &   $15$ &  $25$
      & $64$  & $9$ &   $19$ &  $8$
      & $66$  & $8$ &   $19$ &  $7$  \\
\hline
$512$ & $54$  & $6$ & $15$  & $25$
      & $62$  & $9$ & $22$  & $7$
      & $65$  & $8$ & $20$  & $7$ \\
\hline
$2048$ & $53$  & $6$ & $15$  & $26$
       & $62$  & $9$ & $24$  & $5$
       & $65$  & $8$ & $20$  & $7$   \\
\hline
$8192$ & $53$ & $6$ &  $15$ & $26$
       &$61$ & $9$ &  $25$ & $5$
       &$65$ & $8$ &  $20$ & $7$  \\
\hline
\end{tabular}
\caption{Example 1. Relative contribution (in $\%$) of the various terms  composing the a posteriori error estimator  for $k\in \{0,1,2\}$.}
\label{ex1:table Comparison}
\end{table}

\subsection{Example 2: Adaptive algorithm for singular solution.}

\begin{figure}[!tb]
\begin{center}
\begin{tabular}{cc}
\includegraphics[width=0.46\linewidth]{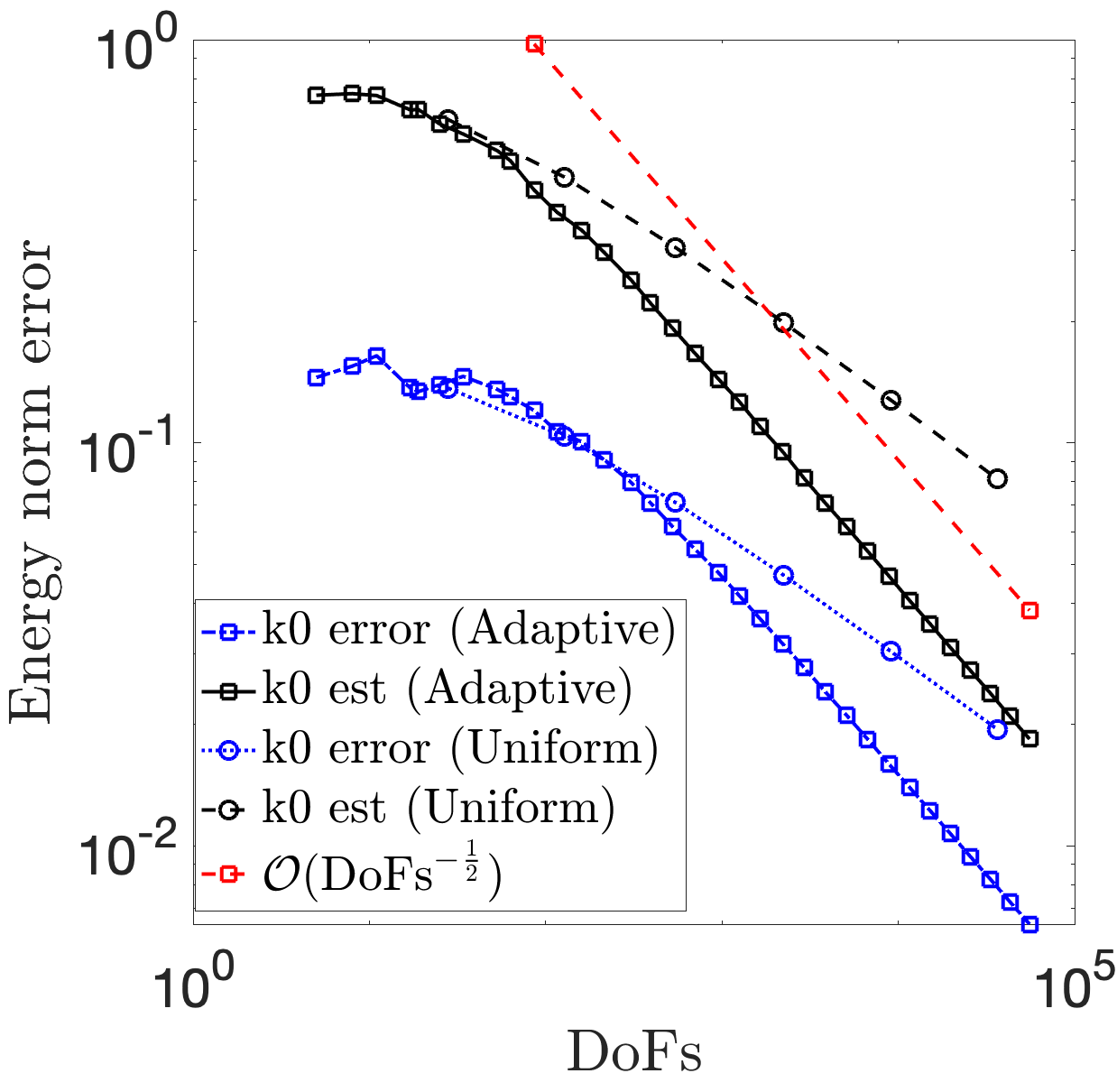} &
\includegraphics[width=0.46\linewidth]{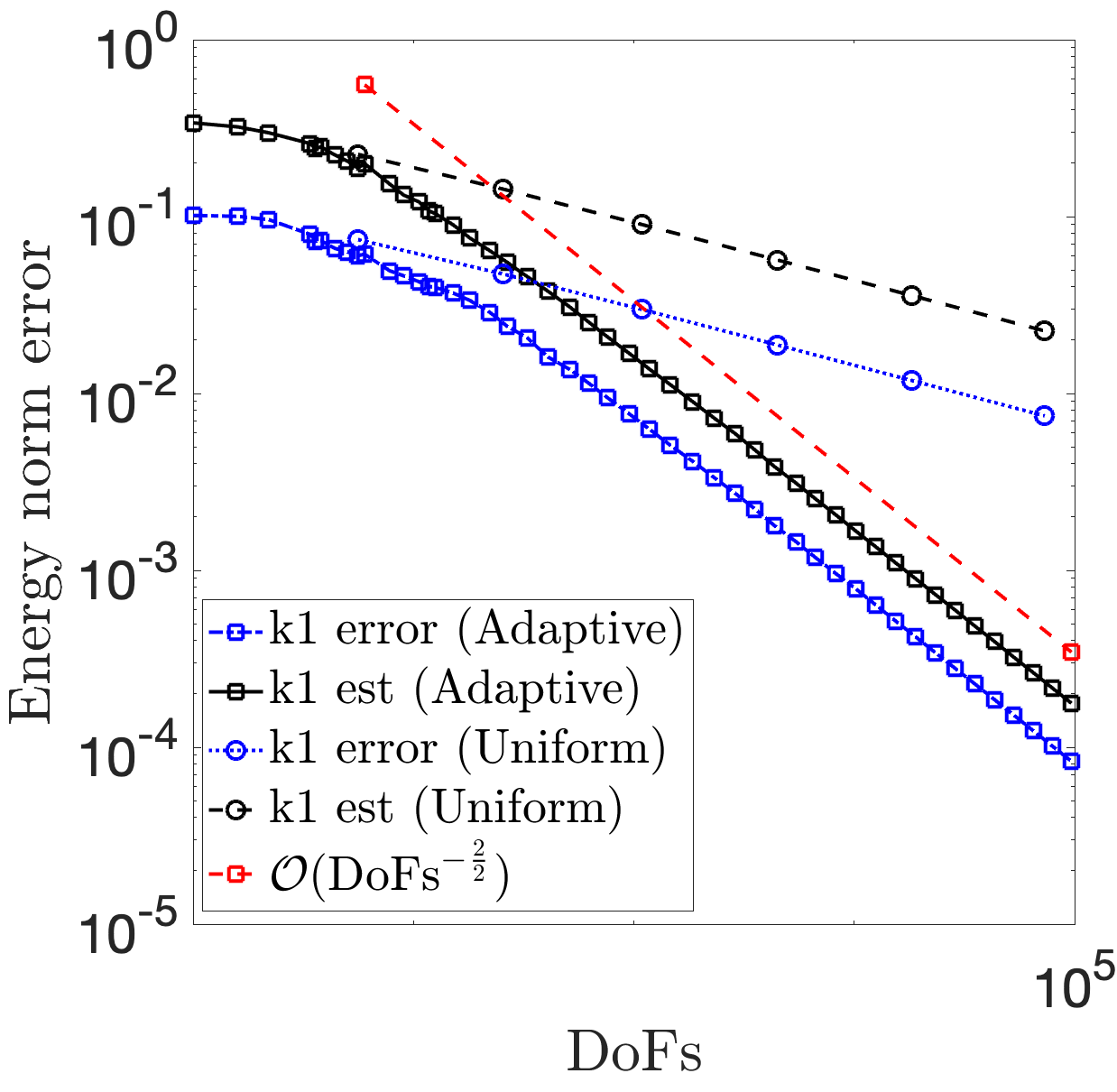} \\
\includegraphics[width=0.46\linewidth]{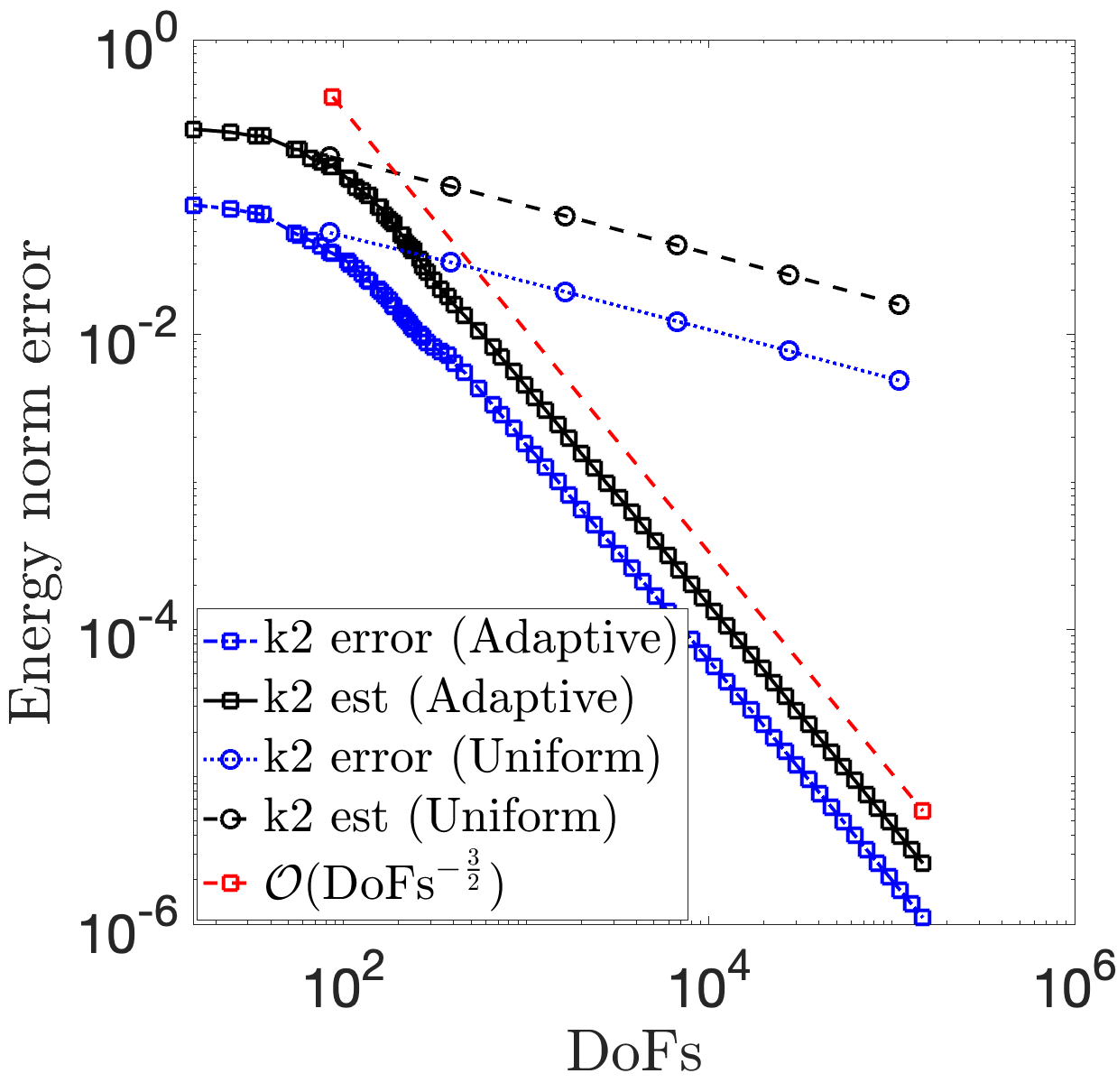} &
\includegraphics[width=0.46\linewidth]{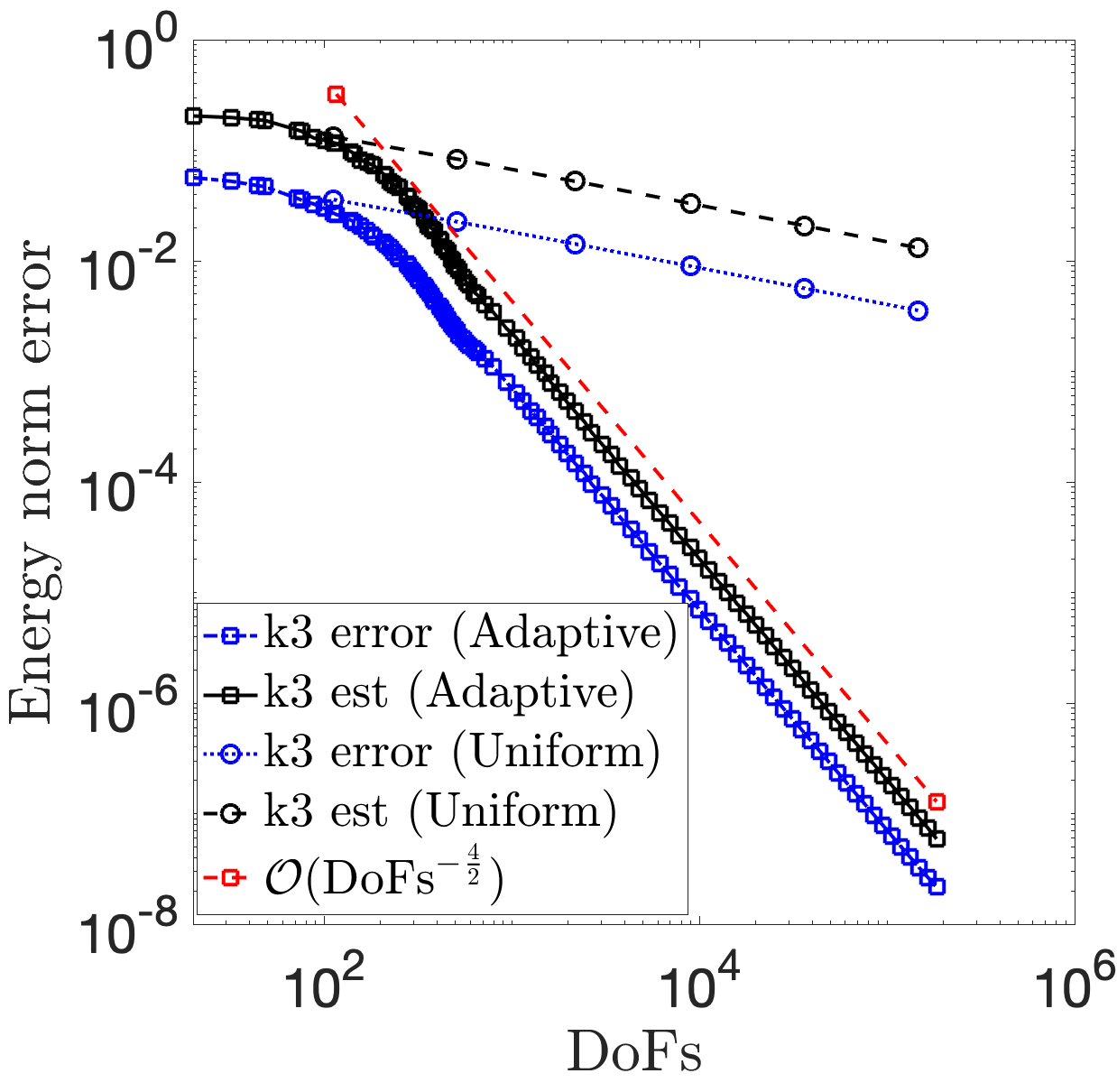}
\end{tabular}
\end{center}
\caption{Example 2. Energy error and a posteriori error estimator as a function of DoFs for $k\in\{0,1,2,3\}$.}\label{Ex2:error_estimator_k=0,1,2,3}
\end{figure}

\begin{figure}[!tb]
\begin{center}
\includegraphics[width=0.45\linewidth]{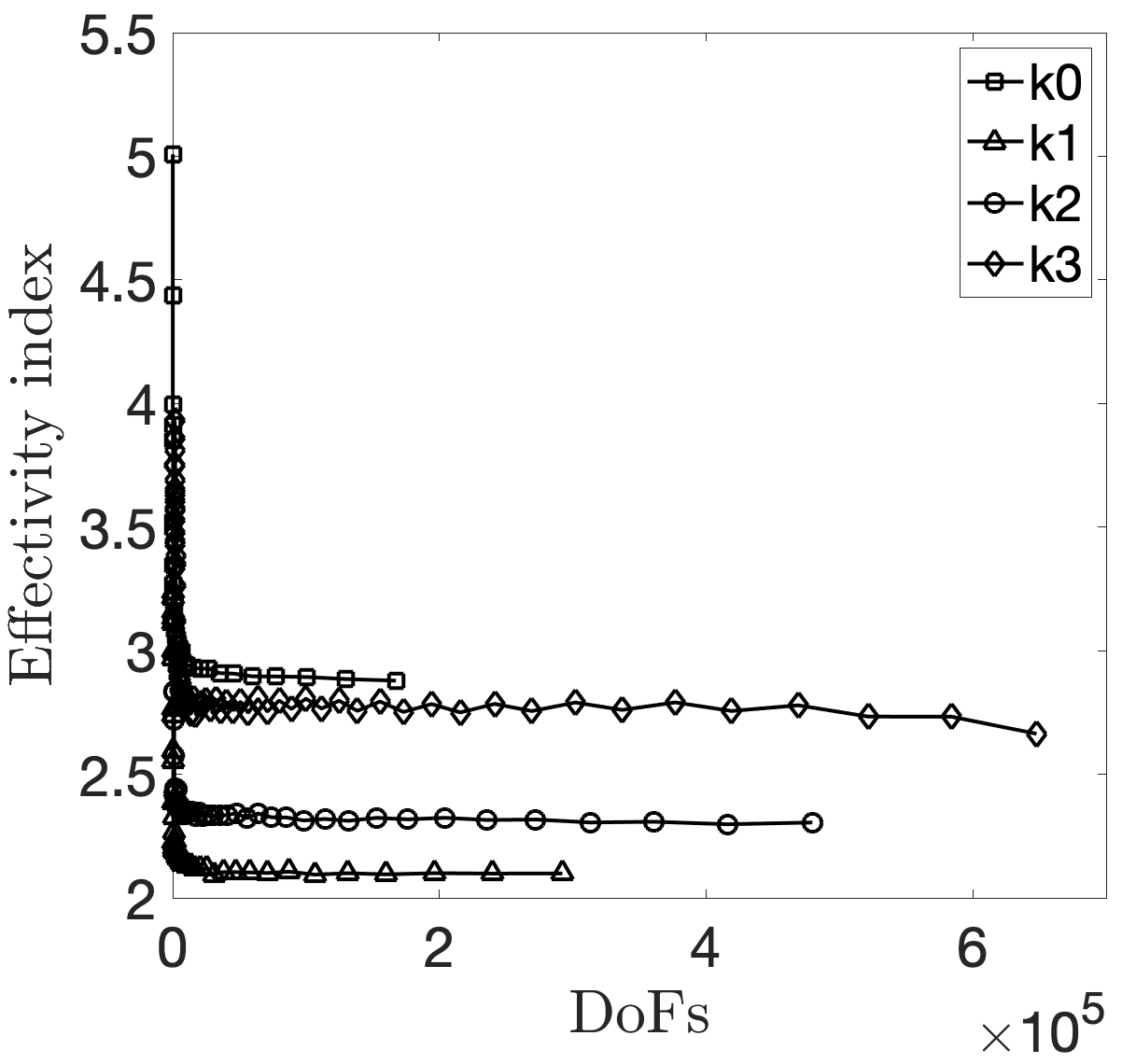}
\qquad
\includegraphics[width=0.45\linewidth]{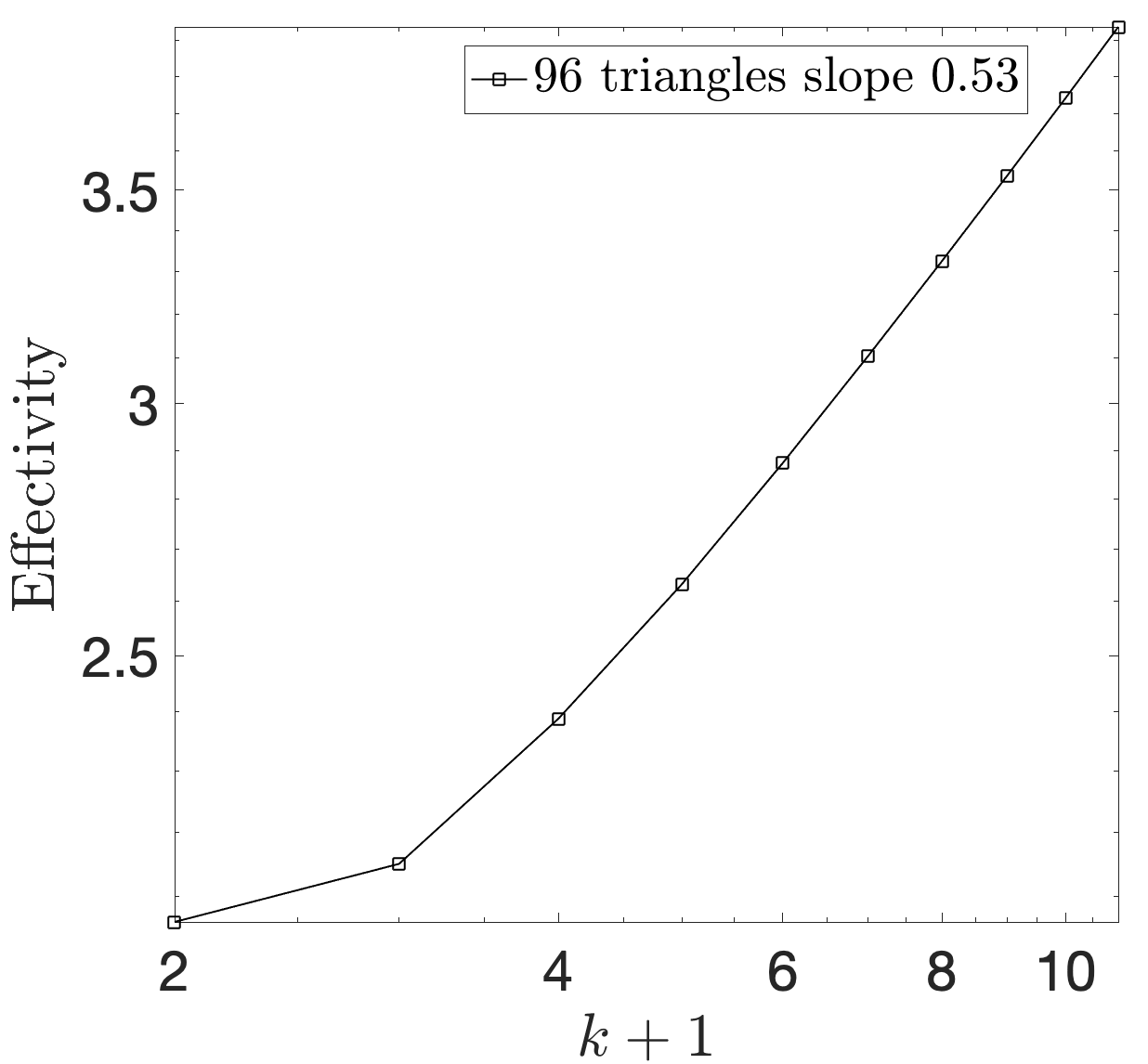}
\end{center}
\caption{Example 2. Effectivity index as a function of DoFs for $k\in\{0, 1,2,3\}$ (left). Effectivity index as a function of $k\in\{1, \dots,10\}$ on a mesh composed of 96 cells (right).}\label{Ex2:effectivity k=0, 1,2,3}
\end{figure}

We select $f$, $\diff := I_{2\times 2}$ and Dirichlet boundary conditions on the L-shaped domain $\Omega := (-1,1)^2 \setminus \{(0,1)\times(-1,0)\}$, so that the exact solution is in polar coordinates
\begin{equation}
u = r^{\frac23} \sin(2\theta/3).
\end{equation}
We test an $h$-adaptive algorithm driven by the a posteriori error estimator from Section~\ref{sec: a posteriori error analysis}. The adaptive algorithm starts from a coarse mesh and uses the estimator on the right-hand side of~\eqref{full bound} to mark mesh cells for refinement through a bulk-chasing criterion (also known as D\"orfler's marking). The adaptive algorithm can be classically described as
\[
\text{SOLVE} \longrightarrow \text{ESTIMATE} \longrightarrow  \text{MARK} \longrightarrow \text{REFINE}.
\]

We first test the convergence rate of the above adaptive algorithm with $k\in\{0,1,2,3\}$ and a bulk-chasing criterion set to $40\%$. The energy error and the a posteriori error estimator are reported in Figure~\ref{Ex2:error_estimator_k=0,1,2,3} with convergence rates computed as a function of DoFs. We observe that the energy error and the a posteriori error estimator converge at the optimal rate $\mathcal{O}(\textup{DoFs}^{-\frac{(k+1)}{2}})$. In contrast, both the energy error and the a posteriori error estimator converge at the suboptimal rate $\mathcal{O}(\textup{DoFs}^{-\frac{1}{3}})$ under uniform refinement, independently of $k$.
Moreover, we observe in the left panel of Figure~\ref{Ex2:effectivity k=0, 1,2,3} that the effectivity index remains well behaved as a function of DoFs and that it slightly increases with the polynomial degree $k\geq1$, with values between 2 and 2.8. For $k=0$, the effectivity index is almost 3.
To gain further insight, we report in the right panel of Figure~\ref{Ex2:effectivity k=0, 1,2,3} the effectivity index as a function of the polynomial degree $k\in\{1,\dots,10\}$ on a mesh consisting of $96$ triangular cells. We observe an algebraic rate of $p^{\frac12}$, again in agreement with Theorem~\ref{thm:upperbound}.

Finally, we report in Table \ref{ex2:table Comparison} the relative contributions of the various terms composing the a posteriori error estimator (all in percentage) with polynomial degrees $k\in\{0,1,2\}$. For $k=1$, the tangential jump estimator dominates the total estimator (by more than $58\%$), followed by $\eta_{\rm nor}$ ($25\%$),  $\eta_{\rm sta}$ ($10\%$) and $\eta_{\rm res}$ ($0\%$). For $k=2$, $\eta_{\rm tan}$, $\eta_{\rm nor}$ and $\eta_{\rm res}$ are fairly well equilibrated (all at about $30\%$), whereas $\eta_{\rm sta}$ is below $10\%$.  For $k=0$, only the tangential jump estimator is nonzero. Clearly, $\eta_{\rm res}=0$ since $f=0$ and $u_K$ is affine for $k=0$. Moreover, one can show that for $f=0$ and $k=0$, the Crouzeix--Raviart FEM solution is the cellwise component of the HHO solution, while the facewise component is the mean-value of the trace of the cell components. This explains why $\eta_{\rm sta}=0$ in Table \ref{ex2:table Comparison}, and consequently $\eta_{\rm nor}=0$ owing to the local conservation property \eqref{conservation property}.

\begin{table}[!htb]
\centering
\begin{tabular}{|c|c|c|c|c|c|c|c|c|c|c|c|c|c|c|}
\hline
\multicolumn{5}{|c|}{$k=0$}
&
\multicolumn{5}{|c|}{$k=1$}
&
\multicolumn{5}{|c|}{$k=2$} \\
\hline
$\#$ cell  & $\eta_{\rm res}$ & $\eta_{\rm sta}$ &   $\eta_{\rm nor}$& $\eta_{\rm tan}$
&$\#$ cell  & $\eta_{\rm res}$ & $\eta_{\rm sta}$ &   $\eta_{\rm nor}$& $\eta_{\rm tan}$
& $\#$ cell & $\eta_{\rm res}$ & $\eta_{\rm sta}$ &   $\eta_{\rm nor}$&$\eta_{\rm tan}$  \\
\hline
$116$ & $0$ & $0$ &  $0$ & $100$
&$118$ & $0$ & $10$ &  $32$ & $58$
& $172$ & $28$  & $9$ &   $29$ &  $34$\\
\hline
$1118$ & $0$ & $0$ &  $0$ & $100$
&$1207$ & $0$ & $10$ &  $26$ & $64$
& $1348$ & $28$  & $8$ & $30$  & $34$ \\
\hline
$5948$ & $0$ & $0$ &  $0$ & $100$
&$6098$ & $0$ & $10$ &  $26$ & $64$
& $5856$ & $27$  & $8$ & $32$  & $33$ \\
\hline
$22306$ & $0$ & $0$ &  $0$ & $100$
&$21762$ &  $0$ & $10$ &  $25$ & $65$
& $21574$ &$26$ & $8$ &  $33$  & $33$ \\
\hline
\end{tabular}
\caption{Example 2. Relative contribution (in $\%$) of the various terms composing the a posteriori error estimator for $k\in \{0,1,2\}$.}
\label{ex2:table Comparison}
\end{table}

\subsection{Example 3: Adaptive algorithm for Kellogg's test case}

Our last example is Kellogg's test case \cite{Kellogg1974poisson}, i.e.,
a diffusion problem on the square domain $\Omega := (-1,1)^2$
with a checkerboard pattern for the diffusion coefficient $A$, namely
$A:=b$ for $xy\geq0$ and $A:=1$ otherwise.
The exact solution (for zero right-hand side and suitable Dirichlet boundary conditions)
takes the form $u:= r^{\alpha}\phi(\theta)$ in polar coordinates, where the explicit expression for the function $\phi$ can be found in \cite[Section~5.3]{MR1980447}.
We select 
$\alpha = 0.1$ and $b = 161.4476387975881$, so that $u\in H^{1.1-\varepsilon}(\Omega)$ with $\varepsilon>0$ arbitrarily small. 


We first test the convergence rate of the $h$-adaptive algorithm described in the previous section with $k\in\{0,1,2,3\}$ and a bulk-chasing criterion set to $10\%$. The energy error and the a posteriori error estimator are reported in Figure~\ref{Ex3:error_estimator_k=0,1,2,3} with convergence rates reported as a function of DoFs. We observe that the energy error and the a posteriori error estimator converge at the optimal rate $\mathcal{O}(\textup{DoFs}^{-\frac{(k+1)}{2}})$. In contrast to Figure~\ref{Ex2:error_estimator_k=0,1,2,3}, we do not include here the convergence plots under uniform mesh refinement because the poor regularity of the exact solution leads to an extremely slow decay; for instance, the energy error is still above $0.1$ for $10^5$ DoFs, independently of $k$. Moreover, we observe in Figure~\ref{Ex3:effectivity k=0, 1,2,3} that the effectivity index remains well behaved as a function of DoFs and that it slightly increases with the polynomial degree $k\geq1$, with values between 1.9 and 2.6. For $k=0$, the effectivity index is almost 2.8. 

\begin{figure}[!tb]
\begin{center}
\begin{tabular}{cc}
\includegraphics[width=0.46\linewidth]{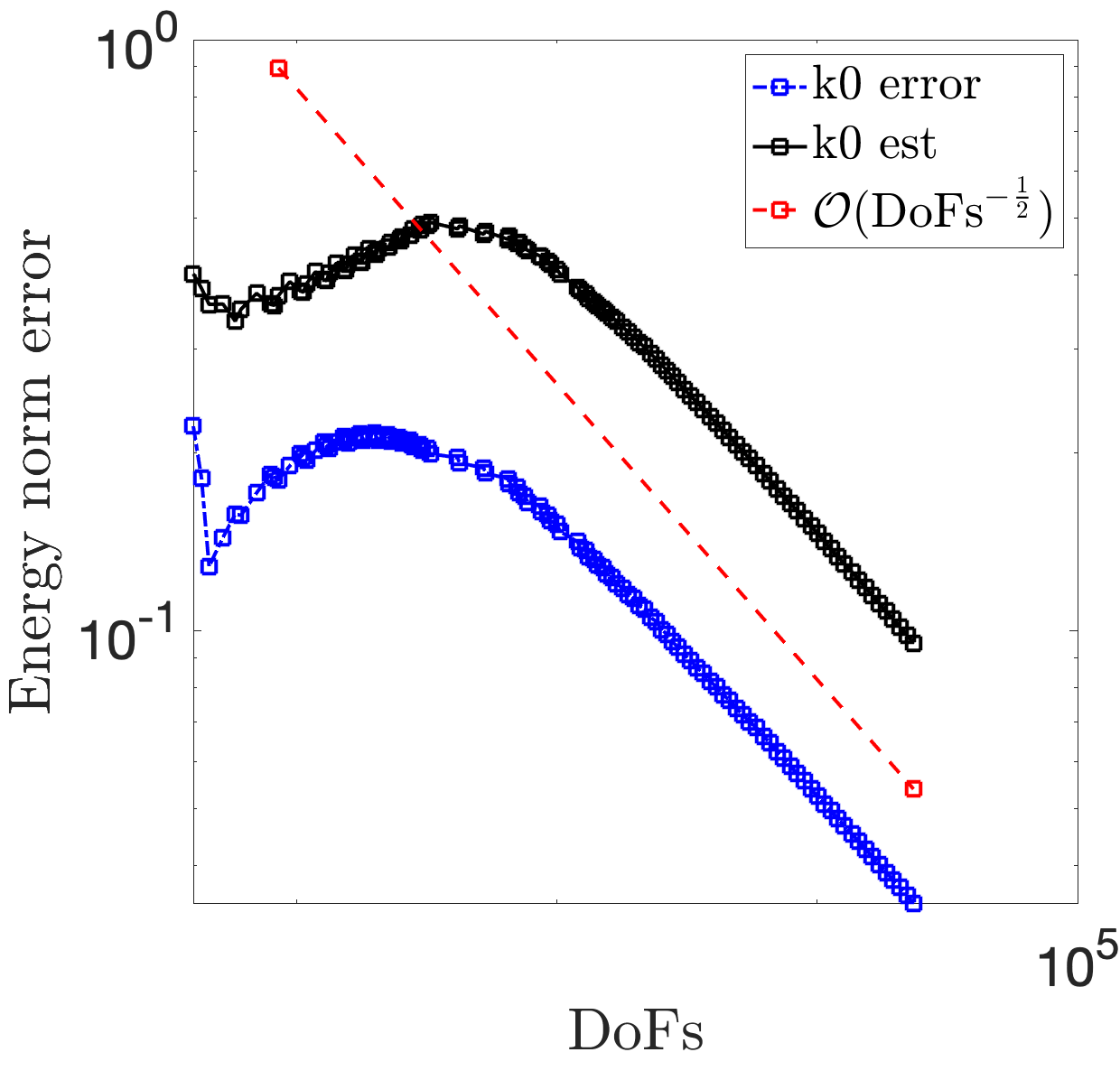} &
\includegraphics[width=0.46\linewidth]{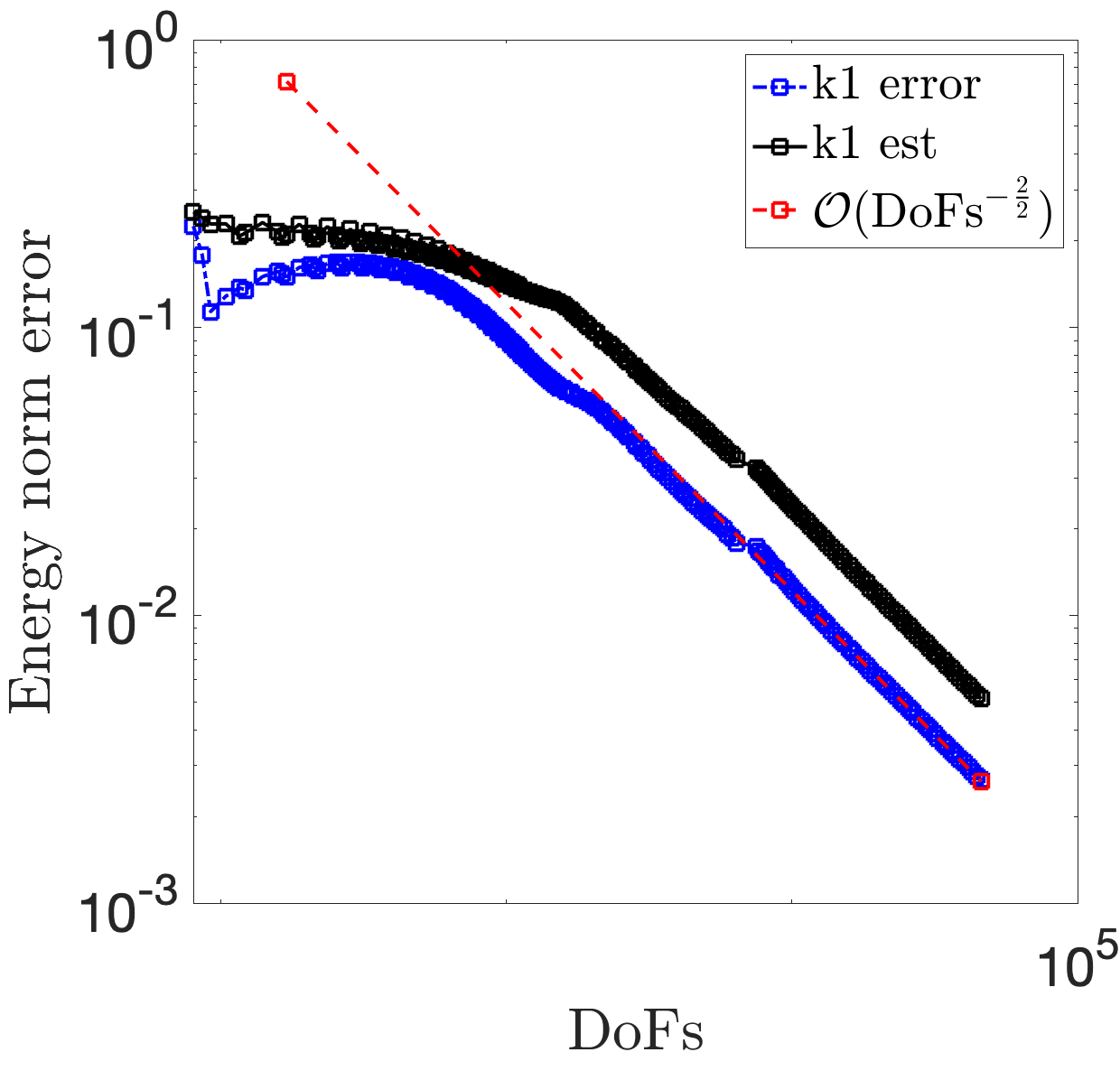}\\
\includegraphics[width=0.46\linewidth]{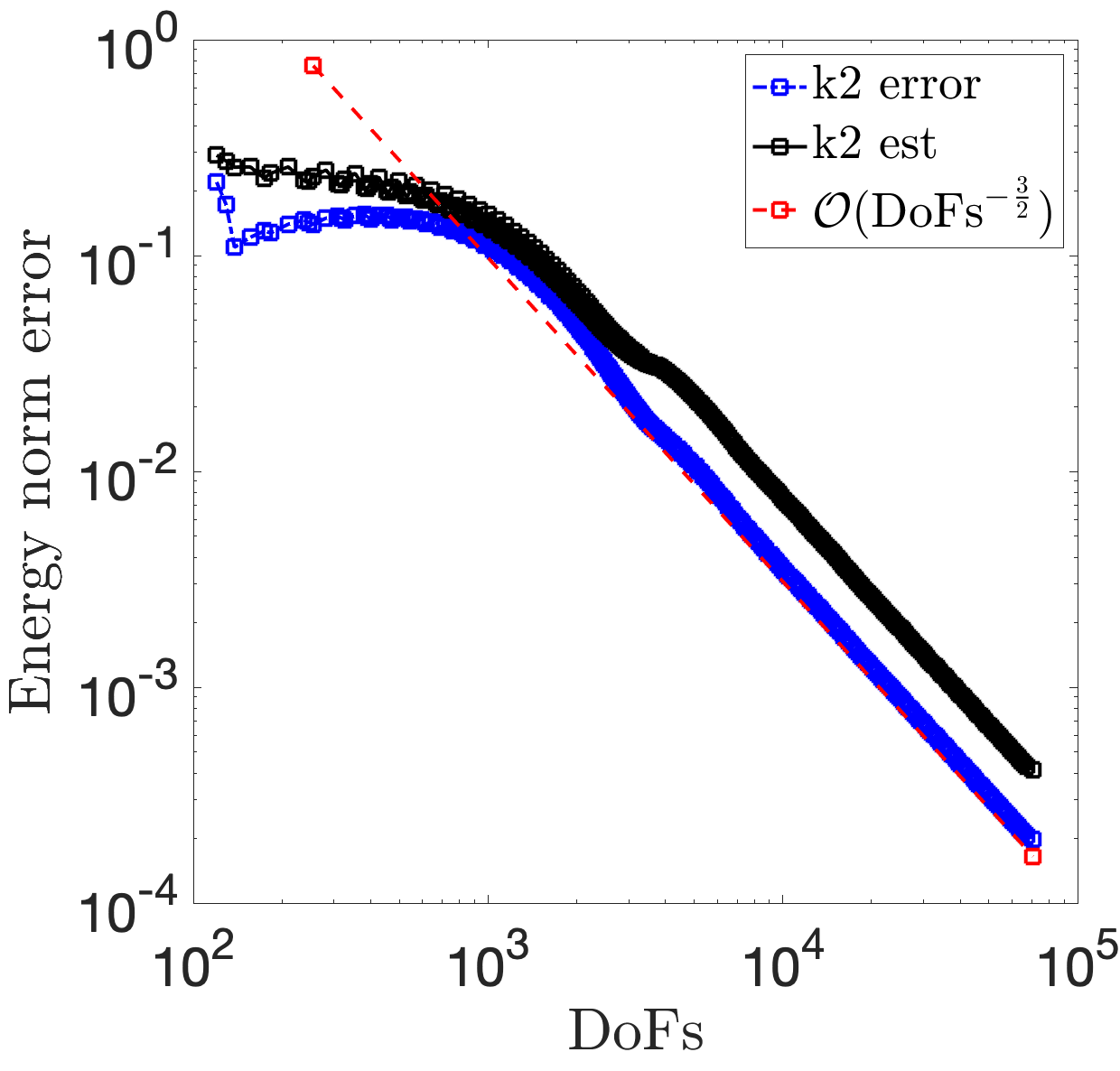} &
\includegraphics[width=0.46\linewidth]{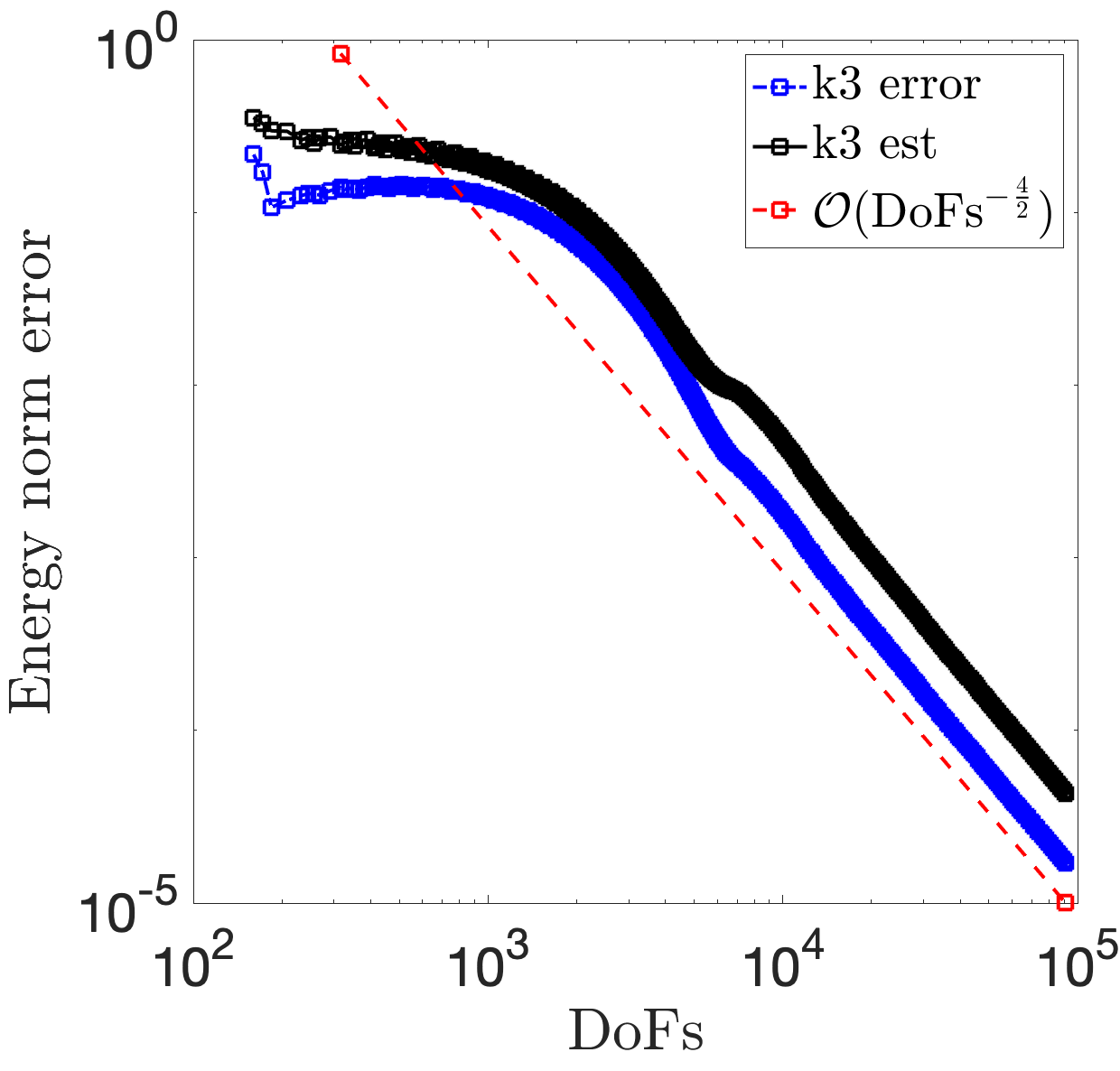}\\
\end{tabular}
\end{center}
\caption{Example 3. Energy error and a posteriori error estimator as a function of DoFs for $k\in\{0,1,2,3\}$.}\label{Ex3:error_estimator_k=0,1,2,3}
\end{figure}

\begin{figure}[!tb]
\begin{center}
\includegraphics[width=0.45\linewidth]{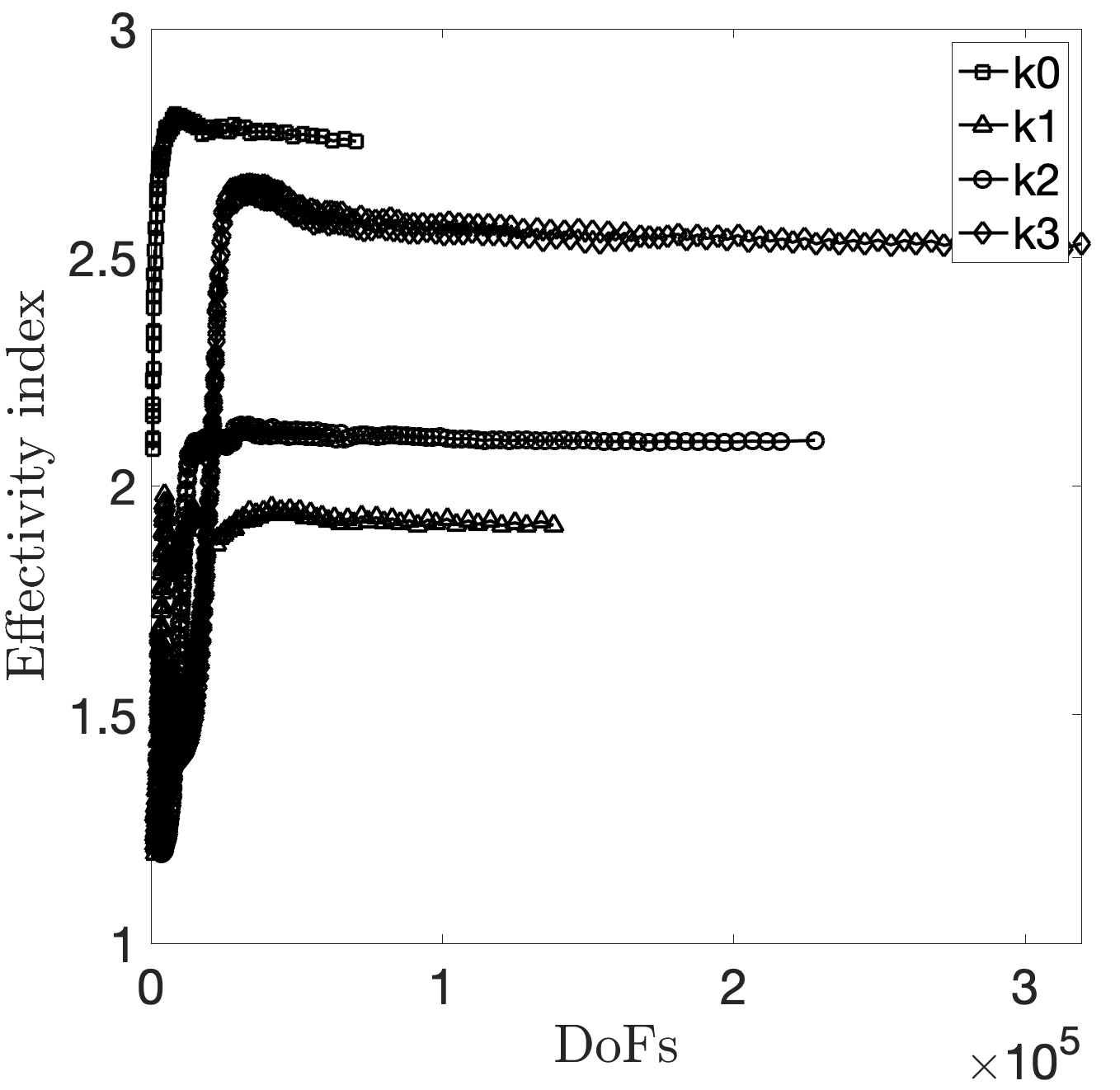}
\end{center}
\caption{Example 3. Effectivity index as a function of DoFs with $k\in\{0, 1,2,3\}$.}\label{Ex3:effectivity k=0, 1,2,3}
\end{figure}

Finally, we report in Table \ref{ex3:table Comparison} the relative contributions of the various terms composing the a posteriori error estimator (all in percentage) with polynomial degrees $k\in\{0,1,2\}$. For $k=1$, the tangential jump estimator dominates the total estimator (by more than $59\%$), followed by $\eta_{\rm nor}$ ($27\%$), while $\eta_{\rm sta}$ is below $10\%$ and $\eta_{\rm res}$ is zero. For $k=2$, the tangential jump estimator dominates the total estimator on the coarsest mesh, whereas the contributions of $\eta_{\rm tan}$, $\eta_{\rm nor}$ and $\eta_{\rm res}$ are fairly well equilibrated on the other meshes. For $k=0$, only the tangential jump residual is nonzero, for the same reasons as discussed in the previous example.
\begin{table}[!htb]
\centering
\begin{tabular}{|c|c|c|c|c|c|c|c|c|c|c|c|c|c|c|}
\hline
\multicolumn{5}{|c|}{$k=0$}
&
\multicolumn{5}{|c|}{$k=1$}
&
\multicolumn{5}{|c|}{$k=2$} \\
\hline
$\#$ cell  & $\eta_{\rm res}$ & $\eta_{\rm sta}$ &   $\eta_{\rm nor}$& $\eta_{\rm tan}$
&$\#$ cell  & $\eta_{\rm res}$ & $\eta_{\rm sta}$ &   $\eta_{\rm nor}$& $\eta_{\rm tan}$
& $\#$ cell & $\eta_{\rm res}$ & $\eta_{\rm sta}$ &   $\eta_{\rm nor}$&$\eta_{\rm tan}$  \\
\hline
$122$ & $0$ & $0$ &  $0$ & $100$
&$124$ & $0$ & $7$ &  $8$ & $85$
& $142$ & $22$  & $9$ &   $11$ &  $58$\\
\hline
$1386$ & $0$ & $0$ &  $0$ & $100$
&$1376$ & $0$ & $8$ &  $33$ & $59$
& $1326$ & $29$  & $9$ & $30$  & $32$ \\
\hline
$5962$ & $0$ & $0$ &  $0$ & $100$
&$5864$ & $0$ & $9$ &  $28$ & $63$
& $5486$ & $28$  & $8$ & $32$  & $32$ \\
\hline
$15642$ & $0$ & $0$ &  $0$ & $100$
&$15342$ & $0$ & $10$ &  $27$ & $63$
& $14894$ &$28$ & $8$ &  $32$  & $32$ \\
\hline
\end{tabular}
\caption{Example 3. Relative contribution (in $\%$) of the various terms composing the a posteriori error estimator for $k\in \{0,1,2\}$.}
\label{ex3:table Comparison}
\end{table}

\section{Proofs} \label{sec:proofs}

This section collects the proofs of all the preparatory results stated in the previous section, namely Lemma~\ref{Lemma: Conforming estimator p suboptimal}, Lemma~\ref{Lemma: Conforming estimator}, Lemma~\ref{lemma: nonconforming error bound on patches}, and
Theorem~\ref{thm:lowerbound}.

\subsection{Proof of Lemma~\ref{Lemma: Conforming estimator p suboptimal}}
We will prove the following result:
\begin{align}
\|A^{\frac12}\nabla e_c\|_{\Omega}  \leq{}& C_{\rm c,1} \bigg\{  \su   \Big\{ A_K^{-1} \Big(\frac{h_K}{k+1}\Big)^2\|\Pi_{K}^{k+1}(f)+ A_K\Delta R_K^{k+1}(\hat{u}_K)\|_K^2
+A_K(k+1)S_{\dK}(\hat{u}_K,\hat{u}_K) \nno \\
&+ O_K(f)^2 + O_K(g_\dn)^2  \Big\}  \bigg\}^{\frac12}  + \|A^{\frac12}\bnabla e_d\|_{\Omega}.
\end{align}%
The proof is split into two steps.

(i) Since $e_c \in H^1_{0,\ddd}(\Omega)$, using \eqref{def: error splitting} leads to
\begin{equation} \label{eq:error equation}
\begin{aligned}
\|A^{\frac12} \nabla e_c\|^2_{\Omega}  &=(A\bnabla e, \nabla e_c)_{\Omega}  - (A\bnabla e_d, \nabla e_c)_{\Omega}  .
\end{aligned}
\end{equation}
For the second term, the Cauchy--Schwarz inequality gives
\begin{equation}\label{eq:bound one}
|(A\bnabla e_d, \nabla e_c)|_{\Omega}  \leq \|A^{\frac12} \nabla e_c\|_{\Omega}  \|A^{\frac12} \bnabla e_d\|_{\Omega} .
\end{equation}
Next, we focus on bounding $(A\bnabla e, \nabla e_c)_{\Omega} $. Using the weak form of the PDE \eqref{eq:prob}, adding and subtracting the term $(A_K\nabla R_K^{k+1}(\hat{u}_K), \nabla e_c)_K$ for all $K\in\mesh$, and using the discrete problem \eqref{HHO:bilinear form} with some test function $\hat{w}_h\in \fesz$, we infer that
\begin{equation*}
\begin{aligned}
(A\bnabla e, \nabla e_c)_{\Omega} &= \su \Big\{ (f, e_c)_K + (g_{\rm N}, e_c)_{\dKN}
-  (A_K\nabla R_K^{k+1}(\hat{u}_K), \nabla e_c)_K \\
&  \qquad
+ (A_K\nabla (R_K^{k+1}(\hat{u}_K)- u_K), \nabla e_c)_K
+   (A_K\nabla R_K^{k+1}(\hat{u}_K), \nabla R_K^{k+1} (  \hat{w}_K ))_K\\
& \qquad
+ S_{\dK}(\hat{u}_K,  \hat{w}_K )
- (f, w_K)_K -  (g_{\rm N}, w_{\dK})_{\dKN}  \Big\}.
\end{aligned}
\end{equation*}
Let us set $\hat{w}_ {h}: =  \big((\ImKM(e_c)|_K)_{K\in\mesh}, (\ImKM(e_c)|_F)_{F\in\Fall}\big)$. This definition is meaningful
since $\ImKM(e_c)$ is single-valued at the mesh interfaces; moreover, since $\ImKM(e_c)$
vanishes on the boundary faces in $\FD$, we have $\hat{w}_h\in \fesz$. We observe that,
for all $K\in\mesh$,
the definition \eqref{reconstruction} of the reconstruction operator gives $R_K^{k+1}(\hat{w}_ {K}) = w_K$, and we also have $S_{\dK}(\hat{u}_K, \hat{w}_ {K})=0$. Hence, defining $\hat{\eta}_ {K}= \big( \eta_ {K}, \eta_ {\dK}\big)  := \big(e_c |_K-w_K , e_c |_{\dK} - w_{\dK}\big) $ for all $K\in\mesh$,  we infer that
\begin{equation*}
\begin{aligned}
(A\bnabla e, \nabla e_c)_{\Omega} &=  \su \Big\{ (f, \eta_K)_K + (g_{\rm N}, \eta_{\dK})_{\dKN}
-  (A_K\nabla R_K^{k+1}(\hat{u}_K), \nabla \eta_{K})_K\\
& \qquad
+ (A_K\nabla (R_K^{k+1}(\hat{u}_K)- u_K), \nabla e_c)_K\Big\}.
\end{aligned}
\end{equation*}
Integrating by parts and using that $\eta_{\dKD} = 0$ and that $A_K$ is constant on $K$ gives
\begin{align}
(A\bnabla e, \nabla e_c)_{\Omega}
&= \su \Big\{ (f+A_K  \Delta R_K^{k+1}(\hat{u}_K), \eta_{K})_K
-   (A_K\nabla R_K^{k+1}(\hat{u}_K) {\cdot} \n_K, \eta_{\dK})_{\dKi} \nonumber \\
&\qquad
 + (g_{\rm N}-A_K\nabla R_K^{k+1}(\hat{u}_K) {\cdot} \n_K, \eta_{\dK})_{\dKN}
+ (A_K\nabla (R_K^{k+1}(\hat{u}_K)- u_K), \nabla e_c)_K\Big\} \label{eq:error equation 2} .
\end{align}

(ii) Since $\eta_{\dK}|_F = (e_c-\ImKM(e_c))|_F$ is single-valued on each interface $F\in \FKi$,
using the local conservation property \eqref{conservation property} of the HHO method gives
\begin{equation*}
\begin{aligned}
(A\bnabla e, \nabla e_c)_{\Omega}  =&   \su \Big\{ (f+ A_K\Delta R_K^{k+1}(\hat{u}_K)), \eta_{K})_K
 - A_{K} \frac{(k+1)^2}{h_{K}}  ( \Pi_{\dK}^k (u_{K}|_{\dK}) - u_{\dK}, \eta_{\dK})_{\dKi }
\\
&
+ (g_{\rm N}- \Pi_{\dK}^k(g_{\rm N}|_{\dK}), \eta_{\dK})_{\dKN}
 - A_{K} \frac{(k+1)^2}{h_{K}}( \Pi_{\dK}^k (u_{K}|_{\dK}) - u_{\dK},  \eta_{\dK})_{\dKN}
\\
&+ (A_K\nabla (R_K^{k+1}(\hat{u}_K)- u_K), \nabla e_c)_K  \Big\}.
\end{aligned}
\end{equation*}
Invoking the Cauchy--Schwarz inequality leads to
\begin{equation*}
\begin{aligned}
\big|(A\bnabla e, \nabla e_c)_{\Omega}\big|
 \leq &\su \bigg\{ \Big(A_K^{-\frac12} \frac{h_K}{k+1}\Big)\|f+ A_K\Delta R_K^{k+1}(\hat{u}_K)\|_K
 \Big(A_K^{\frac12} \frac{k+1}{h_K}\Big) \| \eta_{K}\|_K
\\
& +    \Big(A_{K} \frac{(k+1)^{3}} {h_{K}}\Big)^{\frac12} \| \Pi_{\dK}^k (u_{K}|_{\dK}) - u_{\dK}\|_{\dKi }
\Big(A_K \frac{k+1}{h_K}\Big)^{\frac12} \|\eta_{\dK}\|_{\dKi }
\\
&
+
 \Big(A_K^{-1} \frac{h_K}{k+1}\Big)^{\frac12} \|g_{\rm N}- \Pi_{\dK}^k(g_{\rm N}|_{\dK})\|_{\dKN}
 \Big(A_K \frac{k+1}{h_K}\Big)^{\frac12} \|\eta_{\dK}\|_{\dKN }
\\
&
 +   \Big(A_{K} \frac{(k+1)^{3}} {h_{K}}\Big)^{\frac12} \|\Pi_{\dK}^k (u_{K}|_{\dK}) - u_{\dK}\|_{\dKN }
\Big(A_K \frac{k+1}{h_K}\Big)^{\frac12} \|\eta_{\dK}\|_{\dKN }
\\
&+ A_K^{\frac12} \|\nabla (R_K^{k+1}(\hat{u}_K)- u_K)\|_K A_K^{\frac12} \|\nabla e_c\|_K \bigg\}.
\end{aligned}
\end{equation*}
Using the approximation result \eqref{eq: Modified KM approximation}, we infer that, for all $K\in \mesh$,
\begin{equation}\label{useful relation}
A_K^{\frac12} \Big\{ \Big(\frac{k+1}{h_K}\Big) \| \eta_{K}\|_K + \Big(\frac{k+1}{h_K}\Big)^{\frac12} \| \eta_{\dK}\|_{\dK} \Big\}
\leq  CA_K^{\frac12} \| \nabla e_c\|_{\mathrm{es}(K)}
\leq C \chi_K(A) \| A^{\frac12}\nabla e_c\|_{\mathrm{es}(K)}.
\end{equation}
Using the above bounds, the bound \eqref{local HHO  bound} on $\|\nabla (R_K^{k+1}(\hat{u}_K)- u_K)\|_K$, the mesh shape-regularity and the triangle inequality for the residual term gives
\begin{align*}
\big|(A\bnabla e, \nabla e_c)_{\Omega} \big|   \leq {}&  C \max_{K\in\mesh} \chi_K(A)
\bigg\{  \su  \Big\{  A_K^{-1}  \Big(\frac{h_K}{k+1}\Big)^2 \|\Pi_K^{k+1}(f)+ A_K \Delta R_K^{k+1}(\hat{u}_K)\|_K^2
\\
&  + A_K(k+1)S_{\dK}(\hat{u}_K,\hat{u}_K)
+A_KS_{\dK}(\hat{u}_K,\hat{u}_K) + O_K(f)^2 + O_K(g_{\dn})^2
\Big\} \bigg\}^{\frac12} \|A^{\frac12}\nabla e_c\|_{\Omega} .
\end{align*}
Combining \eqref{eq:error equation} and \eqref{eq:bound one} with the above bound completes the proof.

\subsection{Proof of Lemma~\ref{Lemma: Conforming estimator}}

We will prove the following result:
\begin{align}
\|A^{\frac12}\nabla e_c\|_{\Omega}  \leq {}& C_{\rm c,2} \bigg\{  \su \Big\{  A_K^{-1} \Big(\frac{h_K}{k+1}\Big)^2\|\Pi_{K}^{k+1}(f)+ A_K\Delta R_K^{k+1}(\hat{u}_K)\|_K^2 +A_KS_{\dK}(\hat{u}_K,\hat{u}_K) \nno \\
& + A_K^{-1} \Big(\frac{h_K}{k+1}\Big) \Big(   \| \jumpK{A\nabla R_{\mesh}^{k+1}(\hat{u}_h)} {\cdot}\n_K \|_{\dKi}^2
+  \ltwo{A_K\nabla R_K^{k+1}(\hat{u}_K)  {\cdot}\n_{K} - \Pi_{\dK}^k (g_{\dn}|_{\dK})}{\dKN}^2 \Big) \nno \\
&+O_K(f)^2 +  O_K(g_{\dn})^2  \Big\} \bigg\}^{\frac12} + \|A^{\frac12}\bnabla e_d\|_{\Omega}.
\end{align}
The starting point is the identity \eqref{eq:error equation 2} obtained at the end of the first step of the proof of Lemma \ref{Lemma: Conforming estimator p suboptimal}. However, we no longer invoke the local conservation property \eqref{conservation property}, but consider the jump of $A\nabla R_{\mesh}^{k+1}(\hat{u}_h)$ across all the mesh interfaces and its value at all the Neumann boundary faces. Recalling that $\eta_{\dK}|_F = (e_c-\ImKM(e_c))|_F$ is single-valued on every interface $F\in \FKi$, we infer from \eqref{eq:error equation 2} that
\begin{equation*}
\begin{aligned}
(A\bnabla e, \nabla e_c)_{\Omega}
={}& \su  \Big\{ (f+ A_K \Delta R_K^{k+1}(\hat{u}_K), \eta_{K})_K
+ (A_K\nabla (R_K^{k+1}(\hat{u}_K)- u_K), \nabla e_c)_K \\
&-
 \sum_{F\in \FKi}   \frac12( \jump{A\nabla R_{\mesh}^{k+1}(\hat{u}_h)} {\cdot}\n_F , \eta_{\dK})_{F}
\\
&
-   (A_K\nabla R_K^{k+1}(\hat{u}_K)  {\cdot}\n_{K}-\Pi_{\dK}^k(g_{\rm N}|_{\dK}),\eta_{\dK})_{\dKN}
+(g_{\rm N}-\Pi_{\dK}^k(g_{\rm N}|_{\dK}), \eta_{\dK})_{\dKN}
\Big\}.
\end{aligned}
\end{equation*}
Invoking the Cauchy--Schwarz inequality leads to
\begin{align*}
\big|(A\bnabla e, \nabla e_c)_{\Omega}\big|
\leq {} &\su \bigg \{ \Big(A_K^{-\frac12} \frac{h_K}{k+1}\Big)\|f+ A_K\Delta R_K^{k+1}(\hat{u}_K)\|_K
\Big(A_K^{\frac12} \frac{k+1}{h_K}\Big) \| \eta_{K}\|_K
\\
& + A_K^{\frac12} \|\nabla (R_K^{k+1}(\hat{u}_K)- u_K)\|_K  \|A^{\frac12}\nabla e_c\|_K \\
& +   \sum_{F\in \FKi} \frac12 \Big(A_{K}^{-1} \frac{h_K} {k+1}\Big)^{\frac12} \| \jump{A\nabla R_{\mesh}^{k+1}(\hat{u}_h)} {\cdot}\n_F \|_{F }
\Big(A_K \frac{k+1}{h_K}\Big)^{\frac12} \|\eta_{\dK}\|_{F}
\\
&+
 \Big(A_K^{-1} \frac{h_K}{k+1}\Big)^{\frac12} \|A_K\nabla R_K^{k+1}(\hat{u}_K)  {\cdot}\n_{\Omega}-\Pi_{\dK}^k ({g}_{\dn}|_{\dK})\|_{\dKN}
 \Big(A_K \frac{k+1}{h_K}\Big)^{\frac12} \|\eta_{\dK}\|_{\dKN } \\
&
+
\Big(A_K^{-1} \frac{h_K}{k+1}\Big)^{\frac12} \|g_{\rm N}- \Pi_{\dK}^k(g_{\rm N}|_{\dK})\|_{\dKN}
\Big(A_K \frac{k+1}{h_K}\Big)^{\frac12} \|\eta_{\dK}\|_{\dKN }
\bigg\}.
\end{align*}
Using the Cauchy--Schwarz inequality, the bound~\eqref{local HHO  bound} on
$\|\nabla (R_K^{k+1}(\hat{u}_K)- u_K)\|_K$,
the approximation result \eqref{useful relation} on $\eta$, and the triangle inequality
for the residual term, we obtain
\begin{align*}
\big|(A\bnabla e, \nabla e_c)_{\Omega}\big|
\leq {}& C \max_{K\in\mesh}\chi_K(A) \Bigg\{  \su  \Big\{   A_K^{-1} \Big(\frac{h_K}{k+1}\Big)^2 \|\Pi_K^{k+1}(f)+ A_K \Delta R_K^{k+1}(\hat{u}_K)\|_{K}^2 + A_K S_{\dK}(\hat{u}_K,\hat{u}_K) \\
& +    A_K^{-1} \Big(\frac{h_K}{k+1}\Big)  \Big(  \|\jumpK{A\nabla R_{\mesh}^{k+1}(\hat{u}_h)} {\cdot}\n_K \|_{\dKi}^2
+ \ltwo{ A_K\nabla R_K^{k+1}(\hat{u}_K) {\cdot} \n_{\Omega}-\Pi_{\dK}^k ({g}_{\dn}|_{\dK})}{\dKN}^2 \Big) \\
& + O_K(f)^2 + O_K(g_{\dn})^2  \Big\} \Bigg\}^{\frac12}  \|A^{\frac12}\nabla e_c\|_{\Omega}.
\end{align*}
Combining \eqref{eq:error equation} and \eqref{eq:bound one}  with the above bound completes the proof.

\subsection{Proof of Lemma~\ref{lemma: nonconforming error bound on patches}} \label{sec:proof_nonconf}

We will prove the following result: For all $\ba \in \vertice$,
\begin{align}
\|A^{\frac12}\bnabla e_{d}^{\ba}\|_{\oma} \le {}&
C_{\rm d}^{\ba} \bigg\{ \!\sua\!  \Big\{ A_K S_{\dK}(\hat{u}_K,\hat{u}_K) + O_K(g_{\ddd})^2 \Big\}
+ \!\! \sum_{F\in \Fa \cap \Fint  } \!\!
A_F^\flat \Big(\frac{h_F}{k+1}\Big) \ltwo{\jump{ \nabla   u_{\mesh}} {\times} \n_F}{F}^2 \nno \\
& + \!\!\sum_{F\in \Fa \cap \FD }\!\! A_F^\flat \Big(\frac{h_F}{k+1}\Big)
\ltwo{ \nabla ( u_{\mesh}-\Pi_{\dK}^{k+1}(g_\ddd|_{\dK}))  {\times} \n_{\Omega}}{F}^2 \bigg\}^{\frac12}.
\end{align}%
To fix the ideas, we assume that $d=3$; all the results also hold for $d=2$.

(1) First, we recall a result on the Helmholtz decomposition on simply connected domains \cite[Theorem 3]{HP19_822}.
Let $\omega$ be a bounded, simply connected, Lipschitz domain and consider a partition of its boundary $\partial\omega$
into two disjoint Lipschitz parts $\gamma_{\ddd}$ and $\gamma_{\dn}$. Then,
for all $\w \in \bL^2(\omega)$, there exist
\begin{subequations} \begin{align}
\xi\in H^1_{0,\ddd} (\omega) &:= \{ \zeta \in H^1(\omega) \,|\, \zeta|_{\gamma_{\ddd}} = 0\}, \\
\bphi\in  \bH^1_{0,\dn}(\omega)&:=\{\bpsi\in \bH^1(\omega)\,|\, \bpsi|_{\gamma_{\dn}} = {\bm 0}  \},
\end{align}\end{subequations}
such that
\begin{equation}\label{eq:helm}
A\w=A\nabla\xi+\textbf{curl}\,\bphi\quad \text{in }\, \omega.
\end{equation}
Moreover, the following holds with a constant $C_{\omega}$ only depending on $\omega$ and its boundary partition:
\begin{subequations} \label{eq: HD stability}
\begin{align}
\ltwo{A^{\frac12}\w}{\omega}^2&=\ltwo{A^{\frac12}\nabla \xi}{\omega}^2+\ltwo{A^{-\frac12}\textbf{curl}\,\bphi}{\omega}^2,\label{eq:helm-norm} \\
\ltwo{\nabla \bphi}{\omega}&\leq C_{\omega} \ltwo{\textbf{curl}\,\bphi}{\omega}. \label{eq:curl norm}
\end{align}
\end{subequations}
For $d=2$, one has $C_{\omega}=1$. For $d=3$, the constant $C_{\omega}$ is more delicate to estimate.
However, when $\omega$ is a vertex patch, estimates are available.
For an interior vertex, the associated patch is star-shaped with respect to a ball whose radius is comparable to the diameter of the patch (see \cite[Proposition~8.2]{chaumontfrelet:hal-05204325}). Consequently, an explicit bound on $C_{\omega}$ follows from \cite[Corollary~29]{GuzSal21}.
For a boundary vertex, the associated patch can be viewed as a chain of star-shaped domains. If the patch is subject only to homogeneous Dirichlet boundary conditions, \cite[Theorem~35]{GuzSal21} applies, whereas, in the presence of mixed boundary conditions, an explicit bound on $C_{\omega}$ follows from \cite[Theorem~1.4]{BottiMascotto2026}.

(2) We apply the Helmholtz decomposition \eqref{eq:helm} to $\w:=\bnabla e_{d}^{\ba}$ on the vertex star $\omega:=\oma$ with the boundary partition such that $\gamma_{\ddd}:=\partial\oma \cap (\Omega\cup\Gamma_{\ddd})$ and $\gamma_{\dn}:=\partial\oma \cap \Gamma_{\dn}$. This
gives $\xi\in H^1_{0,\ddd} (\oma)$ and $\bphi\in \bH^1_{0,\dn}(\oma)$ such that
$A\bnabla e_{d}^{\ba} = A\nabla\xi + \textbf{curl}\,\bphi$ in $\oma$. Notice that $H^1_{0,\ddd} (\oma)$ is indeed the functional space defined in~\eqref{def: Vertx space} for $g=0$.
Notice also that the stability constant $C_\omega$ in~\eqref{eq: HD stability} only depends on the
mesh shape-regularity, since, in particular, there is only a finite number of possible partitions of the boundary of $\oma$ and this number is bounded in terms of the mesh shape-regularity.
Taking the $L^2(\oma)$-inner product with $\bnabla e_{d}^{\ba}$ and observing that
$(A \bnabla e_{d}^{\ba}, \nabla \xi)_{\oma}=0$ owing to the
definition~\eqref{def: Patch reconstruction} of $u_c^{\ba}$,
we infer that
\[
\ltwo{A^{\frac12}\bnabla e_{d}^{\ba}}{\oma}^2
=  ( \bnabla e_{d}^{\ba},  \textbf{curl}\,\bphi )_{\oma}.
\]

(3) We consider a modified $hp$-Karkulik--Melenk interpolation operator
on the vertex star $\oma$ preserving the homogeneous boundary condition on $\gamma_{\dn} = \partial\oma \cap \Gamma_{\dn}$, say $\IamKM$ (recall that $k\geq1$ by assumption).
This operator is constructed as in the proof of Corollary~\ref{cor: global interpolation of KM} and leads to
the following local approximation result: For all $\ba\in\vertice$, all
$\bphi\in \bH^1_{0,\dn} (\oma)$, and all $K\in\mesha$,
\begin{equation} \label{eq:approx_IamKM}
\Big(\frac{k}{h_K}\Big)^{2}\|\bphi - \IamKM (\bphi)\|^2_{K} + \Big(\frac{k}{h_K}\Big)\|\bphi - \IamKM (\bphi)\|^2_{\partial K}  + \|\nabla \IamKM (\bphi)\|^2_{K}
 \leq C_{\rm mKM}^{\vertice} \| \nabla \bphi\|^2_{\oma},
\end{equation}
where the constant $C_{\rm mKM}^{\vertice}$ is bounded in terms of the mesh
shape-regularity. Notice that the bound on $C_{\rm mKM}^{\vertice}$ is uniform over the mesh vertices; this follows by observing that the
Karkulik--Melenk interpolation operator is itself constructed vertex-wise and that the
nodal averaging operator is uniformly stable over the mesh vertices.
Notice also that $\mathrm{es}(K) = \oma$ since $\IamKM$
is locally constructed in the vertex star $\oma$.
Considering the vector-valued version of the above operator
(simply acting componentwise), we obtain
\[
\ltwo{A^{\frac12}\bnabla e_{d}^{\ba}}{\oma}^2 =
( \bnabla e_{d}^{\ba}, \textbf{curl}\,({\bphi}-\IamKM({\bphi})) )_{\oma}
+ ( \bnabla e_{d}^{\ba}, \textbf{curl}\,\IamKM ({\bphi}) )_{\oma} =: T_1+T_2,
\]
and it remains to bound $T_1$ and $T_2$.

(4) Bound on $T_1$. Since $\textbf{curl}\,\nabla u^{\ba}_c =\mbf{0}$,
$({\bphi}-\IamKM({\bphi}))|_{\partial\oma\cap\Gamma_{\dn}}=\mbf{0}$, and
$u_{c}^{\ba} \in H^1_{g,\ddd} (\oma)$ (see~\eqref{def: Vertx space}), integrating by parts
the curl operator gives
\[
( \nabla u_{c}^{\ba}, \textbf{curl}\,({\bphi}-\IamKM({\bphi})) )_{\oma}
= ({\bphi} -\IamKM ({\bphi} ), \nabla (\psi_{\ba} g_\ddd) {\times} \n_\Omega  )_{\partial \oma\cap \Gamma_{\ddd}}.
\]
Similar arguments, together with the fact that ${\bphi}- \IamKM({\bphi})$ is single-valued across every interface $F\in\Fa\cap \Fint$, give
\begin{align*}
( \bnabla (\psi_{\ba}u_{\mesh}), \textbf{curl}\,({\bphi}-\IamKM({\bphi})) )_{\oma}
= {}& \sum_{F\in \Fa\cap \Fint} ({\bphi}-\IamKM ({\bphi}) ,  \jump{\bnabla  (\psi_{\ba}  u_{\mesh})} {\times} \n_F  )_{F} \\
& + \sum_{F\in\Fa \cap \FD} ({\bphi} -\IamKM ({\bphi}),  \nabla (\psi_{\ba} u_{\mesh}) {\times} \n_\Omega)_{F}.
\end{align*}
Putting the above two identities together and using that
\[ \bnabla e_{d}^{\ba} =
\nabla u_{c}^{\ba} - \bnabla (\psi_{\ba}u_{\mesh}),
\qquad
\bnabla  (\psi_{\ba}  u_{\mesh})
= \psi_{\ba}\bnabla u_{\mesh} + u_{\mesh}\nabla\psi_{\ba},
\]
and that $(\nabla \psi_{\ba}  {\times} \n_F )_F$ is single-valued across every interface $F\in\Fa \cap \Fint$, we infer that
\begin{align*}
T_1
= {}& -\!\!\sum_{F\in \Fa\cap \Fint} \!\!({\bphi} -\IamKM ({\bphi}) ,\psi_{\ba}\jump{\nabla   u_{\mesh}} {\times} \n_F)_{F}
- \!\!\sum_{F\in\Fa \cap \FD} \!\!({\bphi} -\IamKM ({\bphi}), \psi_{\ba} \nabla (  u_{\mesh}-g_\ddd) {\times} \n_\Omega)_F \\
&
 - \!\!\sum_{F\in \Fa\cap \Fint} \!\!({\bphi} -\IamKM ({\bphi}) , ( \nabla \psi_{\ba}  {\times} \n_F ) \jump{  u_{\mesh}})_{F}
- \!\!\sum_{F\in\Fa\cap \FD} \!\!({\bphi} -\IamKM ({\bphi}), ( \nabla \psi_{\ba} {\times} \n_\Omega) (  u_{\mesh}-g_\ddd))_F.
\end{align*}
Let $T_{11}$ and $T_{12}$ denote, respectively, the terms on the first and second lines on the
above right-hand side.
Using the Cauchy--Schwarz inequality and $\norm{\psi_{\ba}}{L^\infty(F)}=1$, we infer that
\begin{align*}
|T_{11}|
\leq {}&  \bigg\{ \!\! \sum_{F\in \Fa \cap \Fint  } \!\!
\Big(\frac{h_F}{k+1}\Big) \ltwo{\jump{ \nabla   u_{\mesh}} {\times} \n_F}{F}^2
+ \!\!\sum_{F\in \Fa \cap \FD }\!\! \Big(\frac{h_F}{k+1}\Big)
\ltwo{ \nabla ( u_{\mesh}-g_\ddd)  {\times} \n_{\Omega}}{F}^2 \bigg\}^{\frac12} \\
& \times \bigg\{ \!\!  \sum_{F\in \Fa \cap (\Fint \cup \FD) } \!\!
\Big(\frac{k+1}{h_F}\Big)\ltwo{{\bphi} -\IamKM ({\bphi})}{F}^2 \bigg\}^{\frac12}.
\end{align*}
For every $F\in \Fa \cap (\Fint \cup \FD)$, we can pick a mesh cell $K\in\mesha$
of which $F$ is a face and obtain from~\eqref{eq:approx_IamKM} that
\[
\Big(\frac{k+1}{h_F}\Big)^{\frac12}\ltwo{{\bphi} -\IamKM ({\bphi})}{F}
\le C \|\nabla \bphi\|_{\oma}.
\]
Putting the above two bounds together gives
\[
|T_{11}|
\leq C \bigg\{ \!\! \sum_{F\in \Fa \cap \Fint  } \!\!
\Big(\frac{h_F}{k+1}\Big) \ltwo{\jump{ \nabla   u_{\mesh}} {\times} \n_F}{F}^2
+ \!\!\sum_{F\in \Fa \cap \FD }\!\! \Big(\frac{h_F}{k+1}\Big)
\ltwo{ \nabla ( u_{\mesh}-g_\ddd)  {\times} \n_{\Omega}}{F}^2 \bigg\}^{\frac12}
\|\nabla \bphi\|_{\oma}.
\]
Owing to~\eqref{eq: HD stability}, we infer that
\begin{equation}\label{eq:bounds on Phi}
\ltwo{ \nabla \bphi }{\oma }
\leq C_{\oma} \ltwo{\textbf{curl}\bphi}{\oma}
\leq C_{\oma}  (A_{\ba}^{\sharp})^{\frac12} \ltwo{A^{-\frac12} \textbf{curl}\bphi }{\oma }
\leq   C_{\oma}   (A_{\ba}^{\sharp})^{\frac12} \ltwo{\diff^{\frac12}\bnabla e_d}{\oma}.
\end{equation}
Combining the above bounds and recalling the definition~\eqref{eq:def_contrast} of $\chi_{\ba}(A)$ and the definition~\eqref{eq:def_AflatF} of $A_F^\flat$, we obtain
\begin{align*}
|T_{11}|
\leq {}& C \chi_{\ba}(A)^{\frac12} \bigg\{ \!\! \sum_{F\in \Fa \cap \Fint  } \!\!
A_F^\flat \Big(\frac{h_F}{k+1}\Big) \ltwo{\jump{ \nabla   u_{\mesh}} {\times} \n_F}{F}^2
+ \!\!\sum_{F\in \Fa \cap \FD }\!\! A_F^\flat \Big(\frac{h_F}{k+1}\Big)
\ltwo{ \nabla ( u_{\mesh}-g_\ddd)  {\times} \n_{\Omega}}{F}^2 \bigg\}^{\frac12} \\
& \times \|\diff^{\frac12}\bnabla e_d\|_{\oma}.
\end{align*}
Let us now bound $T_{12}$. Letting $\mathbb{I}$ be the identity operator, we write
$T_{12}=T_{12a}+T_{12b}$ with
\begin{align*}
T_{12a} ={}& - \!\!\sum_{F\in \Fa\cap \Fint}\!\! ({\bphi} -\IamKM ({\bphi}) , ( \nabla \psi_{\ba} {\times} \n_F ) (\mathbb{I} - \Pi_{F}^{k})\jump{  u_{\mesh}})_{F}	\\
&-\!\!\sum_{F\in \Fa  \cap \FD}\!\! ({\bphi} -\IamKM ({\bphi}), ( \nabla \psi_{\ba}  {\times} \n_\Omega) (\mathbb{I} - \Pi_{F}^{k})(  u_{\mesh}-g_\ddd)|_F)_{F}\\
T_{12b} = {}& - \!\!\sum_{F\in \Fa\cap \Fint}\!\! ({\bphi} -\IamKM ({\bphi}) , (\nabla \psi_{\ba} {\times} \n_F)  \Pi_{F}^{k} \jump{  u_{\mesh}})_{F}	\\
&- \!\!\sum_{F\in \Fa \cap \FD}\!\! ({\bphi} -\IamKM ({\bphi}), ( \nabla \psi_{\ba} {\times} \n_\Omega) ( \Pi_{F}^k (  u_{\mesh}-g_\ddd)|_F)_{F}.
\end{align*}
Using the Cauchy--Schwarz inequality and $\norm{ \nabla \psi_{\ba}  {\times} \n_F}{L^\infty(F)}\leq C h^{-1}_F$ gives
\begin{align*}
|T_{12a}| \le {}&C
\bigg\{ \!\!\sum_{F\in \Fa \cap \Fint  }\!\! \Big(\frac{h_F}{k+1}\Big) h_F^{-2}\ltwo{ (\mathbb{I} - \Pi_{F}^{k})\jump{  u_{\mesh}}}{F}^2 + \!\!\sum_{F\in \Fa \cap \FD } \!\!
\Big(\frac{h_F}{k+1}\Big)  h_F^{-2} \ltwo{(\mathbb{I} - \Pi_{F}^{k})(  u_{\mesh}-g_\ddd)|_F}{F}^2
\bigg\}^{\frac12} \\
& \times \bigg\{ \!\!\sum_{F\in \Fa \cap (\Fint \cup  \FD) }\!\!
\Big(\frac{k+1}{h_F}\Big)\ltwo{{\bphi} -\IamKM ({\bphi})}{F}^2\bigg\}^{\frac12}.
\end{align*}
Invoking the Poincar\'e inequalities
$\norm{(\mathbb{I} - \Pi_{F}^{k})\jump{  u_{\mesh}}}{F} \leq Ch_F\norm{\jump{ \nabla  u_{\mesh}} {\times}  \n_F }{F}$ for all $F\in \Fa \cap \Fint$ and
$\ltwo{(\mathbb{I} - \Pi_{F}^{k})(  u_{\mesh}-g_\ddd)|_F}{F} \leq Ch_F\ltwo{  \nabla ( u_{\mesh}-g_\ddd) {\times} \n_\Omega}{F}$ for all $F\in \Fa \cap \FD$, we infer that
\begin{align*}
|T_{12a}| \le {}&C
\bigg\{ \!\!\sum_{F\in \Fa \cap \Fint  }\!\! \Big(\frac{h_F}{k+1}\Big) \norm{\jump{ \nabla  u_{\mesh}} {\times}  \n_F }{F}^2 + \!\!\sum_{F\in \Fa \cap \FD } \!\!
\Big(\frac{h_F}{k+1}\Big) \ltwo{  \nabla ( u_{\mesh}-g_\ddd) {\times} \n_\Omega}{F}^2
\bigg\}^{\frac12} \\
& \times \bigg\{ \!\!\sum_{F\in \Fa \cap (\Fint \cup  \FD) }\!\!
\Big(\frac{k+1}{h_F}\Big)\ltwo{{\bphi} -\IamKM ({\bphi})}{F}^2\bigg\}^{\frac12}.
\end{align*}
Invoking the same arguments as above to bound the term involving $\bphi$, we conclude that
\begin{align*}
|T_{12a}| \le {}&C\chi_{\ba}(A)^{\frac12}
\bigg\{ \!\!\sum_{F\in \Fa \cap \Fint  }\!\! A_F^\flat \Big(\frac{h_F}{k+1}\Big) \norm{\jump{ \nabla  u_{\mesh}} {\times}  \n_F }{F}^2 + \!\!\sum_{F\in \Fa \cap \FD } \!\!
A_F^\flat \Big(\frac{h_F}{k+1}\Big) \ltwo{  \nabla ( u_{\mesh}-g_\ddd) {\times} \n_\Omega}{F}^2
\bigg\}^{\frac12} \\
& \times \|\diff^{\frac12}\bnabla e_d\|_{\oma}.
\end{align*}
Turning our attention to $T_{12b}$, we observe that $u_F$ is single-valued at every interface $F\in\Fa\cap\Fint$, and that $u_F = \Pi_{F}^{k} (g_{\ddd}|_F )$ on every Dirichlet boundary face $F\in\Fa\cap\FD$. Therefore, we have
\begin{align*}
T_{12b} = {}& - \!\!\sum_{F\in \Fa\cap \Fint}\!\! ({\bphi} -\IamKM ({\bphi}) , (\nabla \psi_{\ba} {\times} \n_F)  \Pi_{F}^{k} \jump{  u_{\mesh} - u_F})_{F}	\\
&- \!\!\sum_{F\in \Fa \cap \FD}\!\! ({\bphi} -\IamKM ({\bphi}), ( \nabla \psi_{\ba} {\times} \n_\Omega) ( \Pi_{F}^k (  u_{\mesh}|_F)-u_F)_{F}.
\end{align*}
Using the Cauchy--Schwarz inequality and $\norm{\nabla \psi_{\ba}  {\times} \n_F}{L^\infty(F)}\leq C h^{-1}_F$ gives
\begin{align*}
|T_{12b}| \le {}& C \bigg\{\sua \sum_{F\in \FK} \Big(\frac{h_F}{k+1}\Big) h_F^{-2}
\ltwo{ \Pi_{\dK}^{k} ( u_K|_{\dK})-u_F}{F}^2 \bigg\}^{\frac12} \\
& \times \bigg\{ \sum_{F\in \Fa \cap (\Fint \cup  \FD) }
\Big(\frac{k+1}{h_F}\Big)\ltwo{{\bphi} -\IamKM ({\bphi})}{F}^2\bigg\}^{\frac12},
\end{align*}
where, in the first factor, we re-organized the sum over the mesh faces in $\Fa$ as a sum
over the mesh cells in $\mesha$.
Invoking the same arguments as above to bound the factor involving $\bphi$ and recalling
the definition~\eqref{def: stabilization} of the stabilization bilinear form $S_{\dK}$, we obtain
\begin{align*}
|T_{12b}|\le {}& C\chi_{\ba}(A)^{\frac12} (k+1)^{-\frac32} \bigg\{\sua  A_K S_{\dK}(\hat{u}_K,\hat{u}_K)
\bigg\}^{\frac12} \|\diff^{\frac12}\bnabla e_d\|_{\oma} \\
\le {}& C\chi_{\ba}(A)^{\frac12} \bigg\{\sua  A_K S_{\dK}(\hat{u}_K,\hat{u}_K)
\bigg\}^{\frac12} \|\diff^{\frac12}\bnabla e_d\|_{\oma}.
\end{align*}
Notice that we dropped the (favorable) factor $(k+1)^{-\frac32}$ since the bound
on $T_2$ derived below does not involve this factor. Putting everything
together, we conclude that
\begin{align*}
|T_1| \le {}& C \chi_{\ba}(A)^{\frac12} \bigg\{ \!\! \sum_{F\in \Fa \cap \Fint  } \!\!
A_F^\flat \Big(\frac{h_F}{k+1}\Big) \ltwo{\jump{ \nabla   u_{\mesh}} {\times} \n_F}{F}^2
+ \!\!\sum_{F\in \Fa \cap \FD }\!\! A_F^\flat \Big(\frac{h_F}{k+1}\Big)
\ltwo{ \nabla ( u_{\mesh}-g_\ddd)  {\times} \n_{\Omega}}{F}^2 \\
& + \sua  A_K S_{\dK}(\hat{u}_K,\hat{u}_K) \bigg\}^{\frac12}
\times \|\diff^{\frac12}\bnabla e_d\|_{\oma}.
\end{align*}

(5) Bound on $T_2$.
Since $\IamKM({\bphi})\in \bH^1_{0,\dn}(\oma)$, $\jump{\textbf{curl} \,\IamKM ({\bphi}) {\cdot} \n}=0$ across every interface $F\in\Fa \cap \Fint$, ${\textbf{curl} \,\IamKM ({{\bphi}}) {\cdot} \n_F}=0$ on every boundary face $F\in \partial \oma \cap \Gamma_{\dn}$, and  $\nabla{\cdot}\textbf{curl}\, \IamKM({\bphi})= 0$ on $\oma$, integrating by parts the gradient operator gives
$$
( \nabla u^{\ba}_c, \textbf{curl}\, \IamKM({\bphi}))_{\oma}
= (\psi_{\ba} g_\ddd, \textbf{curl} \,\IamKM ({\bphi}) {\cdot} \mbf{n}_{\Omega})_{\partial \oma \cap\Gamma_\ddd} = \sua (\psi_{\ba}g_\ddd,  \textbf{curl}\, \IamKM ({\bphi}) {\cdot} \n_\Omega)_{\dKD}.
$$
Similarly, we have
\[
( \bnabla (\psi_\ba u_{\mesh}), \textbf{curl}\, \IamKM({\bphi}))_{\oma}
= \sua (\psi_{\ba}u_K, \textbf{curl}\, \IamKM ({\bphi})  {\cdot} \n_K )_{\dK}.
\]
Recalling that $T_2=( \bnabla e_{d}^{\ba}, \textbf{curl}\, \IamKM ({\bphi}))_{\oma}$
and $e_{d}^{\ba}=u^{\ba}_c-\psi_\ba u_{\mesh}$, we infer that
\begin{align*}
T_2 ={}& \sua
\Big\{
(g_\ddd- u_K, \psi_{\ba}\textbf{curl}\, \IamKM ({\bphi})  {\cdot} \n_\Omega )_{\dKD}
-(u_K, \psi_{\ba} \textbf{curl}\, \IamKM ({\bphi})  {\cdot} \n_K )_{\dKi}
\Big\}\\
={}& \sua
\Big\{
(\Pi_{\dK}^{k}(g_\ddd|_{\dK}-u_K|_{\dK}), \psi_{\ba} \textbf{curl}\, \IamKM ({\bphi})  {\cdot} \n_\Omega )_{\dKD}
-(\Pi_{\dK}^k (u_K|_{\dK}), \psi_{\ba}  \textbf{curl}\, \IamKM ({\bphi})  {\cdot} \n_K )_{\dKi} \Big\}.
\end{align*}
Notice that we used here that $\psi_{\ba} \textbf{curl}\, \IamKM ({\bphi}){\cdot} \n_K$
is in $\mathbb{P}^k(\FK)$ to introduce the $L^2$-orthogonal projection $\Pi_{\dK}^{k}$ in
both terms on the right-hand side.
Since $u_{\dK}$ is single-valued on every interface $F\in\FKi$ and ${u_{\dK}}|_F = \Pi_{F}^{k} (g_{\ddd}|_F)$ on every boundary face $F\in\FKD$, and since $\norm{\psi_{\ba}}{L^\infty(F)}=1$, we obtain
\[
T_2 = \sua -(\Pi_{\dK}^{k} (u_K|_{\dK})-u_{\dK}, \psi_{\ba} \textbf{curl}\, \IamKM ({\bphi}) {\cdot} \n_K )_{\dKi\cup\dKD}.
\]
Invoking the Cauchy--Schwarz inequality,
the discrete trace inverse inequality \eqref{trace_inv}, and the
definition of the stabilization bilinear form $S_{\dK}$,
we infer that
\begin{align*}
|T_2| \leq {}& (A_{\ba}^{\flat})^{-\frac12} \bigg\{\!\sua\! A_K\frac{(k+1)^2}{h_K}\ltwo{ \Pi_{\dK}^{k}(u_K|_{\dK})-u_{\dK}}{\dKi\cup\dKD }^2  \bigg\}^{\frac12}
\bigg\{ \!\sua\! \frac{h_K}{(k+1)^2}\ltwo{\textbf{curl}\, \IamKM ({\bphi})}{ \dKi\cup\dKD }^2  \bigg\}^{\frac12}  \\
\leq {}& C(A_{\ba}^{\flat})^{-\frac12}
\bigg\{ \!\sua\! A_KS_{\dK}(\hat{u}_K,\hat{u}_K) \bigg\}^{\frac12}
\ltwo{\textbf{curl}\, \IamKM ({\bphi})}{\oma} \\
\leq {}& C' (A_{\ba}^{\flat})^{-\frac12}
\bigg\{ \!\sua\! A_K S_{\dK}(\hat{u}_K,\hat{u}_K) \bigg\}^{\frac12}
\|\nabla \bphi\|_{\oma},
\end{align*}
where the last bound follows from $\ltwo{\textbf{curl}\, \IamKM ({\bphi})}{\oma} \le
(d-1) \|\nabla \IamKM ({\bphi})\|_{\oma}$ and the $H^1$-stability of $\IamKM$
(see~\eqref{eq:approx_IamKM}).
Proceeding as above to bound $\|\nabla \bphi\|_{\oma}$, we conclude that
\[
|T_2| \leq C \chi_{\ba}(A)^{\frac12} \bigg\{ \!\sua\! A_K S_{\dK}(\hat{u}_K,\hat{u}_K) \bigg\}^{\frac12}
\times \|\diff^{\frac12}\bnabla e_d\|_{\oma}.
\]

(3) Putting the bounds on $T_1$ and $T_2$ together and invoking the triangle inequality to introduce the oscillation term on $g_{\ddd}$ proves the assertion.

\begin{remark}[$k=0$]\label{remark: k=0}
Recall that the assumption $k\ge1$ is needed to invoke the
(modified) Karkulik--Melenk interpolation operator.
In the case $k=0$, the nonconforming error can be bounded by using a (piecewise affine)
nodal-averaging operator.
\end{remark}

\subsection{Proof of Theorem~\ref{thm:lowerbound}}

We will prove the following result: For all $K\in\mesh$,
\begin{subequations} \begin{align}
\eta_{K,{\rm res}} \leq {}& C_{\rm l} (k+1) \big( \|A^{\frac12} \nabla e\|_K  A_K^{\frac12} S_{\dK}(\hat{u}_K,\hat{u}_K)^{\frac12} + O_K(f) \big), \label{eq:proof_lb1} \\
\eta_{K,{\rm nor}} \leq {}& C_{\rm l} (k+1)^{\frac12} \bigg\{ \sum_{\mathcal{K} \in \omega_K}
A_{\mathcal{K}}S_{\partial \mathcal{K}}(\hat{u}_\mathcal{K},\hat{u}_\mathcal{K})
\bigg\}^{\frac12},  \label{eq:proof_lb2} \\
\eta_{K,{\rm tan}} \leq {}& C_{\rm l} (k+1)^{\frac32} \bigg\{ \sum_{\mathcal{K} \in \omega_K}
\ltwo{A^{\frac12}\nabla e}{\mathcal{K}}^2 \bigg\}^{\frac12}.  \label{eq:proof_lb3}
\end{align} \end{subequations}%
We consider the following bubble functions from \cite{MelenkWohlmuth01}: For all
$K\in\mesh$ and all $F\in \Fall$, we set
\begin{equation}
b_{K}(\bx) : = h_K^{-1} {\rm dist} (\bx, \dK), \; \forall \bx\in K, \qquad
b_{F}(\bx) : = h_F^{-1} {\rm dist} (\bx, \dF), \; \forall \bx \in F.
\end{equation}
We recall the following $hp$-inverse inequalities from \cite[Theorem 2.5]{MelenkWohlmuth01}:
For all $v\in \mathbb{P}^{k+1}(K)$, all $K\in \mesh$, and all $F\in \FK$, we have
\begin{subequations}
\begin{align}
\ltwo{v}{K} &\leq  C (k+1)\ltwo{b_K^{\frac12}v}{K}, \label{eq: L2_norm_cell_bubble} \\
\ltwo{\nabla (v b_{K})}{K} &\leq  C \Big( \frac{k+1}{h_K} \Big) \ltwo{b_K^{\frac12}v}{K}, \label{eq:  inverse estimates for bubble} \\
\ltwo{v}{F} &\leq  C (k+1)\ltwo{b_F^{\frac12}v}{F}.  \label{eq:L2_norm_face_bubble}
\end{align}
\end{subequations}
We also recall the following $hp$-polynomial extension result from
\cite[Lemma~2.6]{MelenkWohlmuth01} (with $\varepsilon:=(k+1)^{-2}$ and $\alpha:=1$):
For all $F\in \FK$ and all $K\in \mesh$,
there exists an extension operator $E_{K,F}:H^1_0(F) \rightarrow H^1(K)$ such that,
for all $v\in \mathbb{P}^k(F)$,
\begin{equation} \label{eq: extension bound}
E_{K,F}(b_Fv)|_{F} = b_F v, \qquad E_{K,F}(b_Fv)|_{\dK \backslash F}  =0,
\qquad \|\nabla E_{K,F}(b_F v)\|_K \leq C \Big(\frac{(k+1)^2}{h_K}\Big)^{\frac12}  \|b_F^{\frac12}v\|_{F}.
\end{equation}

(1) Proof of~\eqref{eq:proof_lb1}.
Set $v_K := \Pi_{K}^{k+1}(f)+A_K\Delta R_K^{k+1}(\hat{u}_K)\in \mathbb{P}^{k+1}(K)$,
$w_K:= b_Kv_K$, observe that $w_K$ vanishes
on $\dK$, and let $w$ be the zero-extension $w_K$ to $\Omega$.
Since $(f,w_K)_K-(A_K\nabla u,\nabla w_K)_K=(f,w)_K-(A_K\nabla u,\nabla w)_K=0$, an integration by parts for $\Delta R_K^{k+1}(\hat{u}_K)$ gives
\begin{align*}
\ltwo{b_K^{\frac12}v_K}{K}^2 &= \ltwo{b_K^{\frac12}(\Pi_{K}^{k+1}(f)+ \diff_K\Delta R_K^{k+1}(\hat{u}_K))}{K}^2 \\
&=(\Pi_{K}^{k+1}(f)+ \diff_K\Delta R_K^{k+1}(\hat{u}_K),w_K)_K \\
&= ((\Pi_{K}^{k+1}(f) -f),w_K)_{K} + (\diff_K \nabla e , \nabla w_K )_K  + (\diff_K \nabla (u_K - R_K^{k+1}(\hat{u}_K)) , \nabla w_K )_K \nno \\
& \leq \ltwo{\Pi_{K}^{k+1}(f) -f}{K}\ltwo{b_Kv_K}{K} + A_K^{\frac12} \big(\ltwo{A^{\frac12}\nabla e}{K} + A_K^{\frac12} \ltwo{\nabla (u_K - R_K^{k+1}(\hat{u}_K))}{K}\big) \ltwo{\nabla (b_Kv_K)}{K} \\
& \leq
\Big(\ltwo{\Pi_{K}^{k+1}(f)-f}{K}
+ CA_K^{\frac12} \Big( \frac{k+1}{h_K}\Big)  \Big(\ltwo{A^{\frac12} \nabla e}{K} +  (A_K S_{\dK}(\hat{u}_K,\hat{u}_K))^{\frac12}    \Big)
 \Big)
\ltwo{b_K^{\frac12}v_K}{K},
\end{align*}
where the last bound follows from $b_K\leq 1$,
\eqref{eq:  inverse estimates for bubble} and \eqref{local HHO  bound}. Hence,
\begin{align*}
\ltwo{b_K^{\frac12}v_K}{K}
&\leq \ltwo{\Pi_{K}^{k+1}(f)-f}{K}
+ CA_K^{\frac12} \Big( \frac{k+1}{h_K}\Big)  \Big(\ltwo{A^{\frac12} \nabla e}{K} +  (A_K S_{\dK}(\hat{u}_K,\hat{u}_K))^{\frac12} \Big)\\
&= A_K^{\frac12} \Big( \frac{k+1}{h_K} \Big) \bigg( O_K(f) + C\Big( \ltwo{A^{\frac12} \nabla e}{K} + A_K^{\frac12} S_{\dK}(\hat{u}_K,\hat{u}_K)^{\frac12} \Big) \bigg),
\end{align*}
where we used the definition~\eqref{eq:def_Okf} of $O_K(f)$.
Invoking~\eqref{eq: L2_norm_cell_bubble}, we infer that
\[
\eta_{K,{\rm res}} = A_K^{-\frac12}\Big(\frac{h_K}{k+1}\Big) \ltwo{v_K}{K}
\leq C(k+1)A_K^{-\frac12}\Big(\frac{h_K}{k+1}\Big) \ltwo{b_K^{\frac12}v_K}{K}.
\]
Combining the above two bounds proves~\eqref{eq:proof_lb1}.

(2) Proof of~\eqref{eq:proof_lb2}.
For every interface $F\in\FKi$, using the local conservation property \eqref{conservation property} and the triangle inequality gives
\begin{align*}
\|\jump{A\nabla R_{\mesh}^{k+1}(\hat{u}_h)}{\cdot}\n_F \|_{F}^2
\leq \sum_{\mathcal{K} \in \{K,K'\}} 2\Big( \frac{A_{\mathcal{K}} (k+1)^2}{h_{\mathcal{K}}}\Big)^2\|\Pi_{\partial \mathcal{K}}^k (u_{\mathcal{K}}|_{\partial\mathcal{K}}) - u_{\partial \mathcal{K}}\|_{F}^2,
\end{align*}
where $K'$ denotes the mesh cell sharing $F$ with $K$.
Using the mesh shape-regularity, we infer that
\begin{align*}
A_K^{-1}\Big( \frac{h_{K}}{k+1}\Big)\|\jump{A\nabla R_{\mesh}^{k+1}(\hat{u}_h)}{\cdot}\n_F \|_{F}^2
&\leq
\sum_{\mathcal{K} \in \{K,K'\}} 2\Big(\frac{A_{\mathcal{K}} }{A_K}\Big) \Big( \frac{(k+1)h_K}{h_{\mathcal{K}}}\Big) \Big( \frac{A_{\mathcal{K}} (k+1)^2}{h_{\mathcal{K}}}\Big)\|\Pi_{\partial \mathcal{K}}^k (u_{\mathcal{K}}|_{\partial\mathcal{K}}) - u_{\partial \mathcal{K}}\|_{F}^2\\
&\leq C \chi'_K(A)(k+1)  \sum_{\mathcal{K} \in \{K,K'\}} A_{\mathcal{K}}S_{\partial \mathcal{K}}(\hat{u}_\mathcal{K},\hat{u}_\mathcal{K}).
\end{align*}
Moreover, for every boundary face $F\in \FKN$, we have
\begin{align*}
A_K^{-1} \Big( \frac{h_{K}}{k+1}\Big) \|A\nabla R_K^{k+1}(\hat{u}_K){\cdot}\n_F  - \Pi^{k}_{\dK}(g_{\dn}|_{\dK})\|_{F}^2
&=
(k+1)A_{K} \frac{ (k+1)^2}{h_{K}}\|\Pi_{\dK}^k (u_{K}|_{\dK}) - u_{\dK}\|_{F}^2\\
&\leq  (k+1) A_K S_{\dK}(\hat{u}_K,\hat{u}_K).
\end{align*}
Summing over all the faces $F\in\FKi\cup \FKN$ completes the
proof of~\eqref{eq:proof_lb2}.

(3) Proof of~\eqref{eq:proof_lb3}.
For every interface $F\in\FKi$, we set $v_F := \jump{\nabla {u_{\mesh}}}{\times} \n_F$ and $w_{\mathcal{K},F} := E_{\mathcal{K},F}(b_Fv_F)$ for all $\mathcal{K}\in \{K,K'\}$, where $K'$ denotes, as above, the mesh cell sharing $F$ with $K$.
Since $\jump{\nabla {u_{\mesh}}}{\times} \n_F = \sum_{\mathcal{K} \in \{K,K'\}}
\nabla u_{\mathcal{K}} {\times} \n_{\mathcal{K}}$ and $w_{\mathcal{K},F}|_F = b_Fv_F$
owing to~\eqref{eq: extension bound}, we infer that
\[
\|b_F^{\frac12} v_F\|_F^2
=\sum_{\mathcal{K} \in \{K,K'\}} ( w_{\mathcal{K},F} , \nabla u_{\mathcal{K}} {\times} \n_{\mathcal{K}})_{F} =
\sum_{\mathcal{K} \in \{K,K'\}} ( \textbf{curl}\, w_{\mathcal{K},F} , \nabla u_{\mathcal{K}} )_{\mathcal{K}}
= \sum_{\mathcal{K} \in \{K,K'\}}  (\textbf{curl}\, w_{\mathcal{K},F} , \nabla (u_{\mathcal{K}}-u))_{\mathcal{K}},
\]
where the second equality follows by integration by parts and the third equality additionally
uses that $\jump{\nabla u}{\times} \n_F = \mbf{0}$.
The Cauchy--Schwarz inequality gives
\[
\|b_F^{\frac12} v_F\|_F^2 \leq (A_F^{\flat}  )^{-\frac12} \sum_{\mathcal{K} \in \{K,K'\}} \ltwo{A^{\frac12}\nabla e}{\mathcal{K}}\ltwo{ \textbf{curl}\, w_{\mathcal{K},F}}{\mathcal{K}}.
\]
Invoking~\eqref{eq: extension bound} gives
\[
\ltwo{ \textbf{curl}\, w_{\mathcal{K},F}}{\mathcal{K}} \le
C \| \nabla E_{\mathcal{K},F}(b_F v_F)\|_{\mathcal{K}} \le C
(k+1)h_K^{-\frac12} \|b_F^{\frac12}v_F\|_F.
\]
Hence,
\[
\|b_F^{\frac12} v_F\|_F\leq C
(A_F^{\flat}  )^{-\frac12} (k+1) h_K^{-\frac12} \sum_{\mathcal{K} \in \{K,K'\}} \ltwo{A^{\frac12}\nabla e}{\mathcal{K}}.
\]
Since $\ltwo{v_F}{F} \leq  C (k+1)\ltwo{b_F^{\frac12}v_F}{F}$ owing to~\eqref{eq:L2_norm_face_bubble}, combining the above bounds gives
\[
\|v_F\|_F\leq C (A_F^{\flat}  )^{-\frac12}
(k+1)^2h_K^{-\frac12}\sum_{\mathcal{K} \in \{K,K'\}} \ltwo{A^{\frac12}\nabla e}{\mathcal{K}}.
\]
This shows that
\[
(A_{F}^{\flat})^{\frac12} \Big(\frac{h_K}{k+1}\Big)^{\frac12}
\ltwo{ \jump{\nabla  u_{\mesh}} {\times} \n_F }{F}
= (A_{F}^{\flat})^{\frac12} \Big(\frac{h_K}{k+1}\Big)^{\frac12} \|v_F\|_F
\leq C
(k+1)^{\frac32}\sum_{\mathcal{K} \in \{K,K'\}} \ltwo{A^{\frac12}\nabla e}{\mathcal{K}}.
\]
A similar bound can be established for all $F\in \FKD$. Summing over all $F\in\FKi\cup \FKD$
completes the proof of~\eqref{eq:proof_lb3}.

\subsection*{Acknowledgment}
The use of the Cleps computing platform at INRIA Paris is gratefully acknowledged.


\ifHAL
\bibliographystyle{siam}
\else
\fi

\bibliography{bibliography}


\end{document}